\pgfplotsset{compat = 1.18}
\numberwithin{equation}{section}
\numberwithin{figure}{section}
\theoremstyle{plain}
\newtheorem{theorem}{Theorem}[section]
\newtheorem{lemma}[theorem]{Lemma}
\newtheorem{proposition}[theorem]{Proposition}
\newtheorem{corollary}[theorem]{Corollary}
\newtheorem{definition}[theorem]{Definition}
\newtheorem{remark}[theorem]{Remark}
\newtheorem{conjecture}{Conjecture}
\newtheorem*{conjecture*}{Conjecture}
\newcommand{\bbE}{\mathbb{E}}
\newcommand{\bbF}{\mathbb{F}}
\newcommand{\bbH}{\mathbb{H}}
\newcommand{\bbI}{\mathbb{I}}
\newcommand{\bbN}{\mathbb{N}}
\newcommand{\bbP}{\mathbb{P}}
\newcommand{\R}{\mathbb{R}} 
\newcommand{\bbT}{\mathbb{T}}
\newcommand{\bbZ}{\mathbb{Z}}
\newcommand{\mC}{\mathcal{C}}
\newcommand{\mF}{\mathcal{F}}
\newcommand{\mH}{\mathcal{H}}
\newcommand{\mL}{\mathcal{L}}
\newcommand{\mN}{\mathcal{N}}
\newcommand{\mP}{\mathcal{P}}
\newcommand{\mX}{\mathcal{X}}
\newcommand{\bfB}{\mathbf{B}}
\newcommand{\X}{\mathbf{X}} 
\newcommand{\Y}{\mathbf{Y}} 
\newcommand{\bfZ}{\mathbf{Z}}
\newcommand{\bfs}{\mathbf{s}}
\newcommand{\x}{\mathbf{x}}
\newcommand{\sfA}{\mathsf{A}}
\newcommand{\sfD}{\mathsf{D}}
\newcommand{\sfF}{\mathsf{F}}
\newcommand{\sfI}{\mathsf{I}}
\newcommand{\sfJ}{\mathsf{J}}
\newcommand{\sfK}{\mathsf{K}}
\newcommand{\sfL}{\mathsf{L}}
\newcommand{\sfP}{\mathsf{P}}
\newcommand{\sfQ}{\mathsf{Q}}
\newcommand{\sfR}{\mathsf{R}}
\newcommand{\sfS}{\mathsf{S}}
\newcommand{\sfU}{\mathsf{U}}
\newcommand{\sfV}{\mathsf{V}}
\newcommand{\eps}{\varepsilon}
\newcommand{\kap}{\kappa}
\newcommand{\mPS}{{\mathcal{Q}}}
\newcommand{\Mat}{\mathsf{M}}
\newcommand{\lfe}{\rho}
\newcommand{\mlfe}{\mu}
\newcommand{\intPot}{W}
\newcommand{\stat}{\pi}
\newcommand{\Cn}{\bbZ_n}
\newcommand{\ta}{\tilde{\alpha}}
\newcommand{\tb}{\tilde{\beta}}
\newcommand{\ff}{\mathfrak{f}}
\newcommand{\mSpace}{\mathscr{M}}
\newcommand{\Ric}{\sfV}
\newcommand{\bSigma}{\mathbf{\Sigma}}
\newcommand{\bPhi}{\mathbf{\Phi}}
\title{A case study on the long-time behavior of the Gaussian local-field equation}
\author{Kevin Hu}
\address{Kevin Hu: Division of Applied Mathematics, Brown University.}
\email{\href{mailto:kevin_hu@brown.edu}{kevin\_hu@brown.edu}}
\author{Kavita Ramanan}
\address{Kavita Ramanan: Division of Applied Mathematics, Brown University.}
\email{\href{mailto:kavita_ramanan@brown.edu}{kavita\_ramanan@brown.edu}}
\date{\today}
\begin{document}
\begin{abstract}
For any integer $\kappa \geq 2$, the $\kappa$-local-field equation ($\kappa$-LFE) characterizes the limit of the neighborhood path empirical measure of interacting diffusions on $\kappa$-regular random graphs, as the graph size goes to infinity.  
It has been conjectured that the long-time behavior of the (in general non-Markovian) $\kappa$-LFE coincides with that of a certain more tractable Markovian analog, the Markov $\kappa$-local-field equation.  
In the present article, we prove this conjecture for the case when $\kappa = 2$ and the diffusions are one-dimensional with affine drifts. 
As a by-product of our proof, we also show that for interacting diffusions on the $n$-cycle (or 2-regular random graph on $n$ vertices), the limits $n \rightarrow \infty$ and $t\rightarrow \infty$ commute. Along the way, we also establish well-posedness of the Markov $\kappa$-local field equations with affine drifts for all $\kappa \geq 2$, which may be of independent interest. 
\end{abstract}
\maketitle
\noindent \textbf{Key words:} interacting diffusions; Ornstein-Uhlenbeck process; local-field equation, conditional McKean-Vlasov equation, Markov local-field equation, long-time behavior, sparse free energy. \\
\noindent \textbf{MSC 2020 subject classifications:}  Primary 60K35, 60J60, 60G15; Secondary  82C22, 82C31

\section{Introduction}

\subsection{Motivation}
\label{s:mot}
Fix $n \in \bbN$ and let $\Cn = (V_n, E_n)$ denote the cycle graph on $n$ elements, that is,
$$V_n = \big\{0, \ldots, n-1\big\}, \quad E_n = \big\{ (u, v) \in V_n \times V_n: u = v \text{ mod }n  \big\}.$$ 
Let $U, W: \R \rightarrow \R$ be continuously differentiable functions and consider the following system of interacting diffusions:
\begin{equation}
    dZ^n_v(t) = - \bigg( U'\big(Z^n_v(t)\big) + \sum_{(u, v) \in E_n} W'\big(Z^n_v(t) - Z^n_{u}(t)\big) \bigg) dt + \sqrt{2} dB_v(t), \quad v \in \Cn,
    \label{e:zFinite}
\end{equation}
where $\{B_v\}_{v \in \Cn}$ is a family of identical and independently distributed (i.i.d.) 1-dimensional Brownian motions. Under general conditions on $U'$ and $W'$, the equation \eqref{e:zFinite} is well-posed. The system \eqref{e:zFinite} is a simple example of an interacting particle system on a sparse graph. Such strongly interacting systems arise as models in physics, biology, economics, and many other fields (see \cite{ramanan2023ICM} for a review). Moreover, equations of the form \eqref{e:zFinite} are finite-difference approximations of stochastic partial differential equations, for example, see Section 2.1 of \cite{hairer2009introduction} or Chapter 1 of \cite{berglund2022spde}.

As a special case of Theorem 3.3 of  \cite{lacker2023localweakconvergence} it follows that on finite time intervals, as $n \rightarrow \infty$ the process $Z^n$ described in \eqref{e:zFinite} converges in distribution (in the local topology) to the unique solution of the following infinite-dimensional diffusion:
\begin{equation}
    dZ_v(t) = - \Big[ U'\big(Z_v(t)\big) + W'\big(Z_v(t) - Z_{v+1}(t)\big) + W'\big(Z_v(t) - Z_{v-1}(t)\big) \Big] dt + \sqrt{2} dB_v(t), \quad v \in \bbZ.
    \label{e:zInfinite}
\end{equation}
Each $Z_v$ takes values in the space $\mC$ of real-valued continuous functions on $[0, \infty)$. The work \cite{lacker2023localweakconvergence} (see Theorem 3.7 therein) also established a hydrodynamic limit, which showed that the neighborhood empirical measure $\mu_n := \frac{1}{n} \sum_{v = 1}^n\delta_{(Z_{v-1}^n. Z_v^n, Z_{v+1}^n)}$, a random probability measure on $\mC^3$, converges weakly as $n \rightarrow \infty$ to the marginal law of $\{Z_v\}_{v \in \{0, \pm1 \}}$. 

Furthermore, it follows from Corollary 3.14 of \cite{lacker2023marginal} that the marginal law of $\{Z_v\}_{v \in \{0, \pm1\}}$ is the unique solution to an autonomous equation known as the \emph{local-field equation} (henceforth abbreviated LFE), given by
\begin{equation}
\label{e:introLFE}
\begin{aligned}
    dZ_0(t) &= - \Big( U'(Z_0(t)) + \sum_{v \in \{\pm1\}} W'\big(Z_0(t) - Z_v(t)\big) \bigg) dt + \sqrt{2} dB_0(t), \\
    dZ_v(t) &= - \Gamma\big(t, Z_v[t], Z_0[t]\big) dt + \sqrt{2} dB_v(t), \quad v = \pm 1,
\end{aligned}
\end{equation}
where $\omega[t] = \{\omega(s)\}_{s \in [0, t]}$ denotes the history of the path $\omega$ up to time $t$, and $\Gamma$ takes the form of the following conditional expectation:
\begin{equation}
    \Gamma(t, \omega, \omega') = \mathbb{E}\bigg[  U'(Z_0(t)) + \sum_{v \in \{\pm1\}} W'\big(Z_0(t) - Z_v(t)\big)\,\bigg|\, Z_0[t] = \omega[t], Z_1[t] = \omega'[t]\bigg], 
    \label{e:introGamma}
\end{equation}
for $\omega, \omega' \in \mC$. Thus, at any time $t \in (0, \infty)$, the functional $\Gamma(t, \cdot, \cdot)$ depends on both the trajectories of the process as well as the law of the process up to that time through certain conditional distributions. We note that the interaction potential in \eqref{e:zFinite} does not scale with the population size $n$ and therefore the hydrodynamic limit described above is different in nature to the one in \cite{GPZ1988}, which involves an additional diffusion-type spatial scaling.

For sparse graphs the LFE plays a  role analogous to mean-field equations, also referred to as \emph{McKean-Vlasov equations}, which characterizes both the limit marginal law of a typical particle and limit empirical measure of interacting diffusions on the complete graph (or suitably dense graphs), as the size of the graph goes to infinity \cite{sznitman1991poc, CD2022poc}. Both the LFE and McKean-Vlasov equations describe nonlinear stochastic processes, namely those whose instantaneous evolutions depend on their laws in addition to their states. However, the LFE \eqref{e:introLFE} involves a conditional nonlinearity since it depends on the law through a conditional distribution,  and thus is an example of a \emph{conditional McKean-Vlasov equation}, henceforth abbreviated to CMVE. The long-time behavior of McKean-Vlasov equations with a nonlinearity that is affine in the measure argument has been the subject of many classical works such as \cite{carrillo2003kinetic, CGM2008prob, tugaut2013doublewell}, as well as more recent developments such as \cite{barbu2023evolution, carrillo2020long, delgadino2023phase, GWWZ2022uniform}. However, due to the lack of continuity of conditional expectations with respect to the underlying measure as well as the conditioning variable, the analysis of CMVEs is more challenging \cite{buckdahn2023cmve}. In particular, results on the long-time behavior of CMVE are limited (though see the introduction of \cite{hu2024htheorem} for a review). 

The solution of the LFE is in general a non-Markovian process. Indeed, due to the history dependence in the conditional expectation in \eqref{e:introGamma}, the LFE \eqref{e:introLFE} is a 
functional or path-dependent stochastic differential equation (SDE). The LFE can also be interpreted as a singular infinite-dimensional stochastic evolution equation with degenerate noise in the sense of Da Prato and Zabczyk \cite{daPrato1992infinite}, see also \cites{huang2019nfpk, wang2022functional}. It is well known that the long-time behavior of infinite-dimensional equations with degenerate noise can be subtle and requires different approaches, as introduced for example in \cite{huang2019nfpk, HMS2011}. We emphasize that our setting poses additional challenges; the lack of continuity of the conditional expectation implies that the LFE \eqref{e:introLFE} does not fall into the framework of \cite{huang2019nfpk}.

A major obstacle to the analysis and simulation of the LFE is its non-Markovian nature; as such, it is of interest to obtain a suitable Markovian approximation of the LFE. The recent work \cite{hu2024htheorem} studies the long-time behavior of a related Markovian CMVE known as the \emph{Markov local-field equation} (henceforth abbreviated MLFE), which takes the form:
\begin{equation}
\label{e:introMLFE}
\begin{aligned}
    dX_0(t) &= - \Big( U'(X_0(t)) + \sum_{v \in \{\pm1\}} W'\big(X_0(t) - X_v(t)\big) \bigg) dt + \sqrt{2} dB_0(t), \\
    dX_v(t) &= - \gamma\big(t, X_v(t), X_0(t)\big) dt + \sqrt{2} dB_v(t), \quad v = \pm 1,
\end{aligned}
\end{equation}
where $\gamma:\R_+ \times \R \times \R \rightarrow \R$ is a measurable function that satisfies
\begin{equation}
    \gamma(t, X_0(t), X_1(t)) := \mathbb{E}\bigg[ U'\big(X_0(t)\big) + \sum_{v \in \{\pm 1\}} W'\big(X_0(t) - X_v(t)\big) \,\bigg|\, X_0(t), X_1(t) \bigg], \quad x, y \in \R.
    \label{e:introM_Gamma}
\end{equation}
Despite superficial similarities between the LFE \eqref{e:introLFE}-\eqref{e:introGamma} and the MLFE \eqref{e:introMLFE}-\eqref{e:introM_Gamma}, the relationship between solutions to the two equations is difficult to characterize; in particular, there is no obvious connection between the MLFE and the infinite particle system \eqref{e:zInfinite}. 

In this work, we take a first step towards exploring the connection between the LFE and MLFE for diffusions, in both dynamical and equilibrium settings. We make the following conjecture:

\begin{conjecture}
\label{c:LFE_MLFE}
For $t \geq 0$, let $\rho_t$ and $\mu_t$ denote the laws of the solutions to the LFE and MLFE, respectively. Then for ergodic interacting diffusions with drift of gradient form, we have $\lim_{t \rightarrow \infty} d(\rho_t, \mu_t) = 0$ for a suitable metric $d$ on $\mP(\R^3)$.
\end{conjecture}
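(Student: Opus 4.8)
The plan is to exploit exact solvability in the affine $\kappa=2$ case. When $U'$ and $W'$ are affine, the finite cycle system \eqref{e:zFinite}, the infinite system \eqref{e:zInfinite}, the LFE, and the MLFE are all Ornstein--Uhlenbeck (Gaussian) processes, so every relevant time-$t$ or stationary law is a Gaussian on $\R^3$ determined by its mean and covariance; after subtracting the common stationary mean produced by the constant terms in the drift, it suffices to track covariances. I would therefore take $d$ to be the $2$-Wasserstein distance $W_2$ on $\mP(\R^3)$, under which $W_2(\rho_t,\mu_t)\le W_2(\rho_t,\rho_\infty)+W_2(\rho_\infty,\mu_\infty)+W_2(\mu_\infty,\mu_t)$ and convergence of centered Gaussians is equivalent to convergence of covariance matrices. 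This reduces the conjecture to three claims: (i) $\rho_t\to\rho_\infty$, (ii) $\mu_t\to\mu_\infty$, and (iii) $\rho_\infty=\mu_\infty$.

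To identify $\rho_\infty$ I would establish the commutation of the $n\to\infty$ and $t\to\infty$ limits for the cycle. Because the drift is of gradient form, \eqref{e:zFinite} is reversible with explicit Gaussian Gibbs invariant measure $\pi_n$, whose circulant covariance is diagonalized by the discrete Fourier transform; its finite-dimensional marginals converge as $n\to\infty$ to those of the infinite stationary field $\pi_\infty$, with covariance given by an explicit Fourier integral. The key quantitative input is that the smallest eigenvalue of the circulant drift matrix is bounded below, \emph{uniformly in $n$}, by a positive constant reflecting the confinement in $U'$, giving $W_2(\mathrm{Law}(Z^n(t)),\pi_n)\le Ce^{-ct}$ with $c>0$ independent of $n$. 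Combining this uniform rate with the finite-horizon local convergence $\mathrm{Law}(Z^n(t))\to\rho_t$ from \cite{lacker2023localweakconvergence} and with $\pi_n\to\pi_\infty$ yields $W_2(\rho_t,\pi_\infty)\le Ce^{-ct}$; this proves (i) with $\rho_\infty$ equal to the $3$-marginal of $\pi_\infty$, and simultaneously establishes the commutation of limits advertised in the abstract.

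Claim (ii) rests on well-posedness and ergodicity of the affine MLFE. Gaussianity makes the conditional expectation in \eqref{e:introM_Gamma} an affine function of its arguments whose coefficients are determined by the process covariance, so well-posedness amounts to solving a self-consistent fixed-point equation for these coefficients; once solved, the MLFE is a genuine linear SDE, and I would obtain exponential convergence to a Gaussian law $\mu_\infty$ by the same spectral-gap argument. The heart of the matter, and the step I expect to be hardest, is claim (iii): the LFE and MLFE differ precisely in the neighbor drift, where the LFE conditions on the full past $(Z_0[t],Z_1[t])$ whereas the MLFE conditions only on the present state $(X_0(t),X_1(t))$, so this Markovian projection a priori distorts the stationary law and there is no formal reason for the equilibria to coincide. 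To reconcile them I would verify directly that the $3$-marginal of $\pi_\infty$ is invariant for the MLFE: evaluating $\gamma$ under $\pi_\infty$ produces an explicit linear neighbor drift, and one checks via the Fourier-integral covariance that this drift satisfies the Lyapunov equation characterizing $\pi_\infty$ as an invariant measure, forcing $\mu_\infty=\rho_\infty$. An alternative route, which I would pursue if the direct computation proves unwieldy, is to identify both $\rho_\infty$ and $\mu_\infty$ as the unique minimizer of the sparse free energy functional, using the $H$-theorem for the MLFE from \cite{hu2024htheorem} to pin down $\mu_\infty$ and the commutation result above to pin down $\rho_\infty$; uniqueness of the minimizer then gives (iii) and completes the proof.
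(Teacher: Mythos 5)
Your overall strategy --- reduce the conjecture to convergence of $\rho_t$ and of $\mu_t$ to a common Gaussian equilibrium and conclude by the triangle inequality --- is exactly the structure of the paper's argument (Theorems \ref{t:r:lfeConv} and \ref{t:r:mlfeConv} combined in Corollary \ref{c:r:lfeMlfe}), and your treatment of the LFE side and of the identification $\rho_\infty=\mu_\infty=\pi$ is sound in spirit: the paper likewise extracts $\pi$ from the Fourier-diagonalized circulant covariance (Lemma \ref{l:nCov}, Remark \ref{r:pi_n_to_pi}), and the reason the Markovian projection does not distort the equilibrium is precisely the Markov-random-field property \eqref{e:pi-mrf} of $\pi$, which makes $\pi$ the unique zero of the modified Fisher information (Lemma \ref{l:I(pi)=0}).

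The genuine gap is in your claim (ii). Once you have solved the self-consistency equation for the affine coefficients of the conditional expectation, the MLFE is \emph{not} a linear SDE with frozen coefficients: the neighbor drift has coefficients $\ta(\Ric_t)$, $\tb(\Ric_t)$ that depend on the law $\mu_t$ itself through the nonlinear Riccati flow \eqref{e:riccati}. A spectral-gap argument for the time-$t$ frozen generator therefore does not yield convergence of $\mu_t$; you must additionally prove that the Riccati trajectory $\Ric_t$ converges to $\Sigma_\pi$, i.e.\ global stability of a nonlinear three-dimensional ODE, which is not controlled by the eigenvalues of any single drift matrix. This is exactly the step the paper does \emph{not} attempt by linear means: it invokes the sparse-free-energy H-theorem of \cite{hu2024htheorem} (Theorem \ref{t:ogHthm}) together with Lemma \ref{l:I(pi)=0} to force $\mu_t \to \pi$, and only then upgrades to a total-variation rate via a Langevin/log-Sobolev interpolation. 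Your fallback of using the H-theorem merely to \emph{identify} $\mu_\infty$ presupposes that $\mu_t$ converges, which is the point at issue. A secondary, fixable issue on the LFE side: the contraction $W_2(\mathrm{Law}(Z^n(t)),\pi_n)\le Ce^{-ct}$ holds with $c$ independent of $n$ but with a prefactor growing like $\sqrt{n}$ when $W_2$ is taken on $\R^{V_n}$, so you must pass to the $\{0,\pm1\}$-marginal (equivalently, to per-entry covariance bounds as in \eqref{e:bSig_n}) before interchanging the $n\to\infty$ and $t\to\infty$ limits.
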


If true, the conjecture would reduce the analysis of the long-time behavior of the non-Markovian LFE to that of the more tractable MLFE. The recent work \cite{hu2024htheorem} establishes well-posedness of the MLFE for bounded potentials, and then provides a thorough analysis of the long-time behavior of the MLFE for a class of interacting diffusions on $\kap$-regular trees, with continuously differentiable potentials. In particular, it shows that as $t \rightarrow \infty$, the solution of the corresponding MLFE converges to a fixed point of a certain cavity equation on the $\kap$-regular tree, and that there exists a unique fixed point when $\kap = 2$.

It is worth noting that a few works \cite{cocomello2023exact,cocomello2025exact, ganguly2022thesis, GanRam25Stationary} have examined the relationship between the MLFE and LFE in the context of interacting jump processes on finite state spaces.  For example, for a certain class of jump processes that include the SIR and SEIR processes, it has been shown in \cite{cocomello2023exact, cocomello2025exact} that the LFE in fact coincides with the MLFE. Furthermore, for a class of reversible jump processes, it is shown in \cite{ganguly2022thesis, GanRam25Stationary} that any stationary distribution of the LFE is also a stationary distribution for the MLFE.  However, the setting of diffusions presents different technical challenges from the jump process setting.  For example, while in the jump case the MLFE is described by a nonlinear ordinary differential equation for which uniqueness is relatively easy to establish,  the MLFE is characterized by an irregular nonlinear partial differential equation (PDE) whose well-posedness is not immediate (see the discussion in Section \ref{ss:intro-main}).  Also,  unlike in the jump case, the range of influence of the evolution of a diffusion at any given vertex become instantaneously infinite as soon as $t > 0$, making control of the correlations between diffusions at different vertices more delicate.  Nevertheless, in the next section we focus on a particular setting in which we can provide an affirmative response to Conjecture 1.

\subsection{Gaussian setting and main contributions}\label{ss:intro-main} We focus on the setting where $U$ and $W$ are the following quadratic potentials:
\begin{equation}
    U(x) = \frac{\alpha + \beta}{2}|x|^2, \quad W(x) = - \frac{\beta}{4}|x|^2, \quad x \in \R,
    \label{e:potentials}
\end{equation}
for $\alpha, \beta \in \R$. In this case, the process \eqref{e:zFinite} corresponds to the following Ornstein-Uhlenbeck process
\begin{equation}
    dZ^n_v(t) = - \bigg( \alpha Z^n_v(t) + \frac{\beta}{2}\big(Z^n_{v+1}(t) + Z^n_{v-1}(t) \big) \bigg) dt + \sqrt{2} dB_v(t), \quad v \in V_n.
    \label{e:zOU}
\end{equation}
If the initial conditions are normally distributed, the solutions to the finite dimensional SDE \eqref{e:zOU}, the infinite-dimensional SDE \eqref{e:zInfinite}, the LFE \eqref{e:introLFE}, and the MLFE \eqref{e:introMLFE} are all given by Gaussian processes, which are amenable to direct analysis. Therefore in this special case, we will refer to the LFE as the Gaussian LFE (or GLFE) and the MLFE as the Gaussian MLFE (or GMLFE). Since well-posedness of the MLFE was proved in \cite{hu2024htheorem} only when $W'$ is bounded, we first establish well-posedness of the GMLFE in Theorem \ref{t:r:wp}. In fact, we establish it in the more general setting of interacting Gaussian diffusions on $\kap$-regular trees for $\kap \in \bbN$ and $\kap \geq 2$, which is introduced in Section \ref{ss:kap_MLFE}.

The purpose of this work is two-fold. Our first goal is to establish the convergence (locally in distribution) of $Z^n$ from \eqref{e:zFinite} to the corresponding $Z$ defined in \eqref{e:zInfinite} over the \emph{infinite} time horizon when $(U, W)$ are of the form \eqref{e:potentials}, thus generalizing the finite-time convergence result of Theorem 3.3 of \cite{lacker2023localweakconvergence}. When $\alpha > |\beta|$, classical results (e.g., see Exercise 5.6.18 of \cite{karatzas1991book} or Section 2 of \cite{jordan1998variational}) imply that \eqref{e:zOU} converges to the unique invariant probability measure $\pi^n$ on $\R^{V_n}$, which is given by
\begin{equation}
    \pi^n(dx_0, \ldots dx_{n-1}) = \exp\bigg(-\sum_{v \in V_n} \bigg(\frac{\alpha + \beta}{2}|x_v|^2 - \frac{\beta}{8}\sum_{(u, v) \in E_n} |x_v - x_u|^2 \bigg)\bigg) \prod_{v \in V_n} dx_v.
    \label{e:pi_n}
\end{equation}
We identify in Theorem \ref{t:r:lfeConv} a Gaussian measure $\stat$ on $\R^3$ such that any solution $\rho_t$ to the GLFE, starting from a Gaussian initial condition, converges as $t \rightarrow \infty$ to $\pi$ in total variation. Moreover, we show that this rate of convergence is exponential.
\begin{figure}[h]
    \[
\begin{tikzcd}[column sep=6 cm, row sep=3 cm]
\big\{Z^n_v(t)\big\}_{v \in \{-1, 0, 1\}} \arrow[d, "{\parbox{3cm}{\centering \footnotesize{Theorem 3.19 \cite{lacker2023marginal}} \\ $n \rightarrow \infty$}}", squiggly]  \arrow[ r, "\text{Classical, }t \rightarrow \infty ", dashed] & \stat^n \arrow[d, "\parbox{3 cm}{\centering \footnotesize{Remark \ref{r:pi_n_to_pi}} \\ $n \rightarrow \infty$}"]  \\
\big\{Z_v(t)\big\}_{v \in \{-1, 0, 1\}}  \arrow[r, "\text{Theorem \ref{t:r:lfeConv},\, } t \rightarrow \infty"] & \stat  
\end{tikzcd}
\]
\caption{Interchange of $n \rightarrow \infty$ and $t \rightarrow \infty$ limits for the interacting diffusion $Z^n$ in \eqref{e:zOU}. The dashed convergence follows from classical results for Langevin diffusions described above, and the squiggly convergence follows from Theorem 3.19 of \cite{lacker2023marginal}. The solid convergence follow from the results of this paper; the right solid arrow is a consequence of Theorem \ref{t:r:lfeConv}, and the downward solid arrow follows from Remark \ref{r:pi_n_to_pi}.}
  \label{f:commutative}
\end{figure}
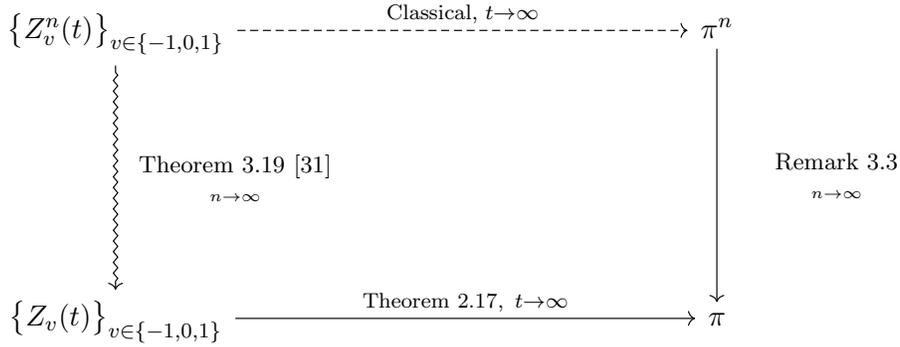

The second goal is to provide evidence for Conjecture \ref{c:LFE_MLFE} by verifying it in the case of quadratic potentials \eqref{e:potentials}. We obtain in Theorem 2.19 an exponential rate of convergence of the solution to the GMLFE in the total variation distance to the invariant measure $\pi$. When combined with the aforementioned convergence results for the GLFE, this implies that total variation distance between solutions to the GLFE and GMLFE decreases exponentially fast (see Corollary \ref{c:r:lfeMlfe} for more details).

\subsection{Generalizations and Future Directions}\label{s:gen} It is natural to ask whether results analgous to those obtained for quadratic potentials can be generalized to more general $( U, W)$. One possible approach is to identify a Lyapunov function on the space of probability measures for the measure-flow of solutions to the LFE. In the recent work \cite{hu2024htheorem}, a Lyapunov function was identified for the measure-flow of the $\kap$-MLFE for a large class of potentials $(U, W)$. This functional, referred to as the \emph{sparse free energy} in \cite{hu2024htheorem}, is given by 
\begin{equation}
    \nu \mapsto \bbH_2(\nu) := \mH(\nu|\pi) - \mH(\bar{\nu}|\bar{\pi}),
    \label{e:sfe'}
\end{equation}
where $\nu$ is a probability measure on $\R^3$, $\bar{\nu}$ is the $(0, 1)$-marginal of $\nu$ (see Definition \ref{def:not:marginal}), and $\pi$ is the unique invariant measure of the MLFE. We believe that $\bbH_2$ is also a Lyapunov function for the measure-flow of the LFE.

\begin{conjecture}
    The sparse free energy $\bbH_2$ serves as a Lyapunov function for the measure-flow of the LFE.
\end{conjecture}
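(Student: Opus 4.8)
The plan is to prove the Lyapunov property $\tfrac{d}{dt}\bbH_2(\rho_t)\le 0$ by a direct entropy-dissipation computation that reduces the LFE to the structural form already handled for the MLFE in \cite{hu2024htheorem}. Write $F := U'(Z_0)+\sum_{v\in\{\pm1\}}W'(Z_0-Z_v)$ for the instantaneous force on the root. The first step is a mimicking (Markovian projection) argument: although the LFE \eqref{e:introLFE}--\eqref{e:introGamma} is path-dependent, its one-time marginal $\rho_t = \mathrm{Law}(Z_{-1}(t),Z_0(t),Z_1(t))$ should solve a Fokker--Planck equation on $\R^3$,
\[
\partial_t\rho_t = \sum_{v\in\{-1,0,1\}}\partial_{x_v}\big(\rho_t\,\partial_{x_v}\log\rho_t - b_v(t,\cdot)\,\rho_t\big),
\]
whose effective drift is the current-state projection $b_v(t,x)=\mathbb{E}[(\text{drift of }Z_v)\mid Z(t)=x]$ of the instantaneous drift. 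For the root this is exact, $b_0(t,x)=U'(x_0)+\sum_{v}W'(x_0-x_v)$; for a neighbor it is $b_v(t,x)=\mathbb{E}[\Gamma(t,Z_v[t],Z_0[t])\mid Z(t)=x]$.

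The second step is the key structural observation, via the tower property, that the path-conditioning in $\Gamma$ collapses upon projection. Since $\Gamma(t,Z_1[t],Z_0[t])=\mathbb{E}[F\mid Z_0[t],Z_1[t]]$ and $\sigma(Z_0(t),Z_1(t))\subset\sigma(Z_0[t],Z_1[t])$, the tower property gives
\[
\mathbb{E}\big[\Gamma(t,Z_1[t],Z_0[t])\mid Z_0(t),Z_1(t)\big]=\mathbb{E}\big[F\mid Z_0(t),Z_1(t)\big],
\]
so that, after projection onto the current state of the conditioning coordinates, the LFE neighbor-drift acquires exactly the MLFE form \eqref{e:introM_Gamma} --- the state-conditional root force --- but computed under the LFE law $\rho_t$ rather than the MLFE law. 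Granting this, the h-theorem computation of \cite{hu2024htheorem} applies: differentiating $\bbH_2(\rho_t)=\mH(\rho_t|\pi)-\mH(\bar{\rho}_t|\bar{\pi})$ from \eqref{e:sfe'} along the flow, integrating by parts, and using that $\bbH_2$ is the conditional relative entropy of $x_{-1}$ given $(x_0,x_1)$ together with the cavity/Gibbs form of $\pi$, the dissipation should organize into a non-positive conditional Fisher-information term; the cross terms that would otherwise obstruct sign-definiteness are annihilated precisely because the drift is the state-conditional projection of $F$ and because the subtraction of $\mH(\bar{\rho}_t|\bar{\pi})$ removes the marginal contribution.

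The main obstacle lies in the gap between the tower identity above and the effective drift $b_v$ that actually enters the Fokker--Planck equation: the tower property only controls $\mathbb{E}[\Gamma\mid Z_0(t),Z_1(t)]$, whereas $b_v(t,x)=\mathbb{E}[\Gamma\mid Z_{-1}(t),Z_0(t),Z_1(t)]$ conditions additionally on the cavity coordinate $Z_{-1}(t)$. Unlike conditioning on the \emph{entire} root history $Z_0[t]$ --- which decouples the two branches and would force $b_v$ to depend on $(x_0,x_1)$ alone --- conditioning on the current state $Z_0(t)$ leaves a residual dependence of $b_v$ on $x_{-1}$, since the past couples the branches. The heart of the proof is therefore a conditional-independence lemma controlling this residual: one must show that the extra dependence on $x_{-1}$ either vanishes or contributes non-positively to $\tfrac{d}{dt}\bbH_2(\rho_t)$, which I expect to follow from a conditional-Jensen (data-processing) inequality exploiting that $b_v$ is a conditional average of the genuine force $F$. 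Making this rigorous in the infinite-dimensional, degenerate-noise regime, where the mimicking argument itself requires justification, is the principal difficulty.

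As a first milestone and template, I would verify the conjecture unconditionally in the Gaussian regime \eqref{e:potentials}: there $\rho_t$ is Gaussian, $\bbH_2(\rho_t)$ reduces to an explicit function of its covariance matrix, and the exponential convergence $\rho_t\to\pi$ from Theorem \ref{t:r:lfeConv} together with the governing covariance ODE should yield the monotonicity by direct computation, both confirming the mechanism and exposing the precise form the conditional-independence argument must take in general.
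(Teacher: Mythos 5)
The statement you are trying to prove is left open in the paper: it appears only as a conjecture in Section \ref{s:gen}, with no proof given, so there is nothing to compare your argument against except the paper's surrounding results. Judged on its own terms, your proposal is a research plan rather than a proof, and it contains a genuine, explicitly acknowledged gap at its center. The Markovian projection step and the tower-property identity are fine as far as they go, but as you yourself observe, the effective drift entering the Fokker--Planck equation for $\rho_t$ is $b_v(t,\x)=\bbE[\Gamma(t,Z_v[t],Z_0[t])\mid Z_{-1}(t),Z_0(t),Z_1(t)]$, which retains a dependence on the cavity coordinate $x_{-1}$ that the MLFE drift does not have; the h-theorem computation of \cite{hu2024htheorem} uses the exact self-consistent form of the MLFE drift to make the cross terms cancel, and once $b_v$ depends on $x_{-1}$ that cancellation is lost. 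Your proposed fix --- that the residual contributes non-positively by a conditional-Jensen or data-processing inequality --- is stated as an expectation, not derived, and there is no a priori reason the extra term has a sign; this is precisely the difficulty that makes the statement a conjecture rather than a theorem, and the paper explicitly notes that it is strictly stronger than Conjecture \ref{c:LFE_MLFE}, which is all that is actually established (and only in the Gaussian case).

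Your proposed first milestone also does not follow from the paper's Gaussian results. Theorem \ref{t:r:lfeConv} gives exponential convergence of $\rho_t$ to $\pi$ in total variation via an explicit Bessel-function formula for the covariance (Proposition \ref{p:lfeSol}); convergence of $\rho_t$ to the minimizer of $\bbH_2$ does not imply that $t\mapsto\bbH_2(\rho_t)$ is monotone, which is what the Lyapunov property requires. Even in the Gaussian case one would have to differentiate the explicit expression for $\bbH_2(\rho_t)$ as a function of $\Sigma_{\rho_t}$ and verify the sign directly, and neither you nor the paper carries out that computation. So the proposal correctly identifies the central obstacle but does not close it, and the fallback milestone you offer is itself unproven.
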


Note that this would imply Conjecture 1, but is a much stronger statement that is not implied by that conjecture.

\subsection{Notation} We now collect some standard notation used throughout the paper.
\label{s:notation}

\subsubsection{Matrices} \label{ss:vec-matrices} 
For $d \in \bbN$, we use $\R^{d \times d}$ to denote the set of $d \times d$ matrices and let $\mathsf{I}_d$ denote the $d$-dimensional identity matrix. For $\mathsf{B} \in \R^{d \times d}$, we let $\mathsf{B}_{i, j}$ denote the $(i,j)$-th entry of $\mathsf{B}$. We write $\mathsf{B}^*$ for the Hermitian conjugate of $
\mathsf{B}$ (or the transpose of $\mathsf{B}$ when $\mathsf{B}$ is real-valued). We use $\delta_{i, j}$ to denote the usual Kronecker delta. Let $$ \|\mathsf{B}\| := \max_{i, j \in \{0, \ldots, \kap\}} |\mathsf{B}_{i,j}|, \quad \mathsf{B} \in \R^{(1 + \kap) \times (1 + \kap)},$$ denote the $l^\infty$ norm on $\R^{(1 + \kap) \times (1 + \kap)}$. We use $\|\cdot\|_F : \R^{d \times d} \rightarrow [0, \infty)$ to denote the Frobenius norm, $\det: \R^{d \times d} \rightarrow \R$ to denote the determinant, and $\text{tr}:\R^{d \times d} \rightarrow \R$ to denote the trace. We say a matrix $\sfA \in \R^{d \times d}$ is non-degenerate if $\det(\sfA) \neq 0$.

\subsubsection{Probability measures and equipped functionals} Given a Polish space $\mX$ denote a Polish space, we use $\mP(\mX)$ to denote the set of Borel probability measures on $\mX$ equipped with the topology of weak convergence.

Let $(\Omega, \mF, \bbP)$ be a probability space. For a random variable $X:(\Omega, \mF) \rightarrow \mX$, we write $\mL(X) \in \mP(\mX)$ for the law of $X$ under $\bbP$. For a measure $\nu \in \mP(\mX)$, we let $Y$ denote the canonical random variable with law $\nu$, and for any measurable function $f: \mX \rightarrow \R^d$, we write
\begin{equation*}
    \bbE^\nu[f(Y)] := \int_\mX f(y) \nu(dy).
\end{equation*}
We write $X \stackrel{(d)}{=} Y$ if $X$ and $Y$ are two random variables that are equal in distribution.  

Let $d \in \bbN$. For $m \in \R^d$ and any positive semi-definite matrix $\Sigma \in \R^{d \times d}$, we write $\lambda \sim \mathcal{N}(m, \Sigma)$ to mean that $\lambda$ is the Gaussian measure on $\R^d$ with mean $m$ and covariance $\Sigma$.  Given a Gaussian measure $\lambda \in \mP(\R^d)$, we let $\Sigma_\lambda \in \R^{d \times d}$ denote its covariance matrix. We say $\lambda$ is \emph{centered} if $m = 0$ and \emph{non-degenerate} if $\Sigma_\lambda$ is strictly positive definite.

\subsubsection{Function spaces} \label{ss:function-spaces} For $T \in (0, \infty)$ and $d \in \bbN$, we let $\mC^d_T$ denote the space of continuous functions from $[0, T]$ to $\R^d$ with the topology of uniform convergence. Similarly, let $\mC^d$ denote the space of continuous functions from $[0, \infty)$ to $\R^d$ equipped with the topology of uniform convergence on compact sets. For a path $\omega \in \mC^d$, we let $\omega[T] := \{\omega_s\}_{s \in [0, T]}$ denote the restriction of $\omega$ to $[0, T]$. For a probability measure $\mu \in \mP(\mC^d)$, we let $\mu[T]$ denote the image of $\mu$ under the (measurable) projection map $\mC^d \mapsto \mC_T^d$ given by $\omega \mapsto \omega[T]$. Similarly we let $\mu_t$ denote the time-marginal of $\mu$ at $t$, that is the image of $\mu$ under the measurable map $\omega \mapsto \omega(t)$.

\section{Local-Field Equations}
\subsection{Preliminaries about probability measures on $\kap$-stars}  {Let $\mX$ be a Polish space. For $\kap \in \bbN$ with $\kap \geq 2$, we will consider vectors of the form $\mathbf{x} = (x_0, x_{\{1, \ldots, \kap\}}) \in \mX^{1 + \kap}$ where we distinguish the element $x_0$. To maintain the intuition described in the introduction, in the special case $\kap = 2$ we will instead write $x_{-1}$ for $x_2$, so that $\mathbf{x} = (x_{-1}, x_0, x_1)$. Recall the function space notation from Section \ref{ss:function-spaces}.} We start by defining a set of probability measures that are invariant with respect to the automorphisms of the star graph. Throughout, we fix $\kap \in \bbN$ with $\kap \geq 2$.
\begin{definition}[Edge marginal]
\label{def:not:marginal}
Let $\mathfrak{p}:\mX^{1 + \kap} \rightarrow \mX \times \mX$ be the projection map $\mathfrak{p}(x_0, x_1, \ldots, x_\kap) = (x_0, x_1)$. For any $\nu \in \mP(\mX^{1 + \kap})$, define the \emph{edge marginal} $$\bar{\nu} := \nu \circ \mathfrak{p}^{-1} \in \mP(\mX \times \mX),$$ to be the 0-1 marginal of $\nu$.     
\end{definition}
\begin{definition}[Symmetric probability measures]
\label{def:not:symProb}
    The space $\mPS_{\kap}$  is the set of probability measures $\nu \in \mP(\R^{1 + \kap})$ that satisfy the following two properties:
    \begin{enumerate}
        \item (Leaf Exchangeability) For any permutation $\tau$ on $\{1, \ldots, \kap\}$, we have
        \begin{equation}
            \label{e:sym1}
            \nu(dx_0, dx_{\{1, \ldots, \kap\}}) = \nu(dx_0, dx_{\{\tau(1), \ldots, \tau(\kap)\}}).
        \end{equation}
        \item (Edge Symmetry) The edge marginal $\bar{\nu}$ satisfies
        \begin{equation}
                    \label{e:sym2}
            \bar{\nu}(dx_0, dx_1) = \bar{\nu}(dx_1, dx_0).
        \end{equation}
    \end{enumerate}
    Additionally,  we define the space $\mPS_{\kap}^\infty$ to be the set of measures $\mu \in \mP(\mC^{1 + \kap})$ such that $\mu_t \in \mPS_{\kap}$ for all $t\in [0, \infty)$. 
\end{definition}
\begin{remark}[Notation for conditional distributions]\normalfont \label{r:conditonalDistributions} {When $\nu \in \mP(\R^d)$ is absolutely continuous with respect to $d$-dimensional Lebesgue measure, we abuse notation and denote the density $\tfrac{d\nu}{dx}$ by $\nu$ so that $\nu(dx) = \nu(x) dx$.}
    Also, for a measure $\lambda \in \mPS_2$ with $\Y = (Y_{-1}, Y_0, Y_1) \sim \lambda$, we will often use $\lambda(y_{-1}|y_0, y_1) := \lambda(y_{-1}, y_0, y_1)/\bar{\lambda}(y_0, y_1)$ to denote the density of the conditional distribution under $\lambda$ of $Y_{-1}$ given $Y_0$ and $Y_1$ and let $\lambda_0$ denote any 1-marginal distribution of $\lambda$, recalling that \eqref{e:sym1} ensures that they are all identical.
\end{remark}

We identify the set of covariance matrices that respect the symmetries of $\mPS_\kap$. 

\begin{definition}[Covariance matrix parametrization]
\label{d:mSpace}
    Define the map $\Mat_{\kap}: \R^3 \rightarrow \R^{(1 + \kap) \times (1 + \kap)}$ by 
\begin{equation}
\Big( \Mat_\kap (a, b, c) \Big)_{i,j} = \begin{cases}
    a &\quad \text{if } i = j, \\
    b &\quad \text{if }i \neq j, i = 0 \text{ or } j = 0, \\
    c &\quad \text{otherwise, }
\end{cases}
\label{e:Mat}
\end{equation}
for $i, j \in \{0, \ldots, \kap\}$. In matrix form, it is written as
\begin{equation}
\label{e:AltMat}
    \Mat_\kap(a, b, c) = \begin{bmatrix}
        a & b & b &\cdots & b \\
        b & a &  c & \cdots & c \\
        b & c & a & \cdots & c \\
        \vdots & \vdots & \vdots & \ddots & \vdots \\
        b & c & c & \cdots & a
    \end{bmatrix}.
\end{equation}
Define the set of matrices
\begin{equation}
    \mSpace_\kap := \big \{ \Mat_\kap(a, b, c): (a,b, c) \in \R^3\big\}.
\end{equation}
Notice that $\mSpace_\kap$ is a closed vector space and $\Mat_\kap:\R^3 \rightarrow \mSpace_\kap$ is a continuous vector space isomorphism. We define $\mSpace_\kap^+$ to be the set of positive definite matrices in $\mSpace_\kap$. 
{Observe} that $\mSpace_\kap^+$ is an open set.
\end{definition}

The following lemma establishes the relationship between $\mSpace_\kap$ and Gaussian measures in $\mPS_\kap$. Lemma \ref{l:gSym} is proved in Appendix \ref{ap:ref_l:gsym}.

\begin{lemma}[Correspondence between measures and covariance matrices]
\label{l:gSym}
    Let $\kap \in \bbN$ satisfy $\kap \geq 2$. If $\lambda \in \mPS_\kap$ is a centered Gaussian measure, then its covariance matrix $\Sigma_\lambda$ lies in $\mSpace_\kap$. Conversely, if $\Sigma = \Mat_\kap(a, b, c)$ is positive semi-definite for $(a, b, c) \in \R^3$, then $\lambda \sim \mathcal{N}(0, \Sigma)$ satisfies $\lambda \in \mPS_\kap$.
\end{lemma}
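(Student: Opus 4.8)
The plan is to use the fact that a centered Gaussian measure is completely determined by its covariance matrix, so that both defining properties of $\mPS_\kap$ can be rephrased as invariance conditions on $\Sigma_\lambda$ under suitable orthogonal matrices. Concretely, for a permutation $\tau$ of $\{1,\ldots,\kap\}$ let $P_\tau \in \R^{(1+\kap)\times(1+\kap)}$ denote the permutation matrix that fixes the $0$-th coordinate and permutes the coordinates $\{1,\ldots,\kap\}$ according to $\tau$, and let $P_{\mathrm{sw}}$ be the $2\times 2$ swap matrix on $\R^2$. Since the pushforward of $\mathcal{N}(0,\Sigma)$ under a linear map $A$ is $\mathcal{N}(0, A\Sigma A^*)$, the leaf-exchangeability condition \eqref{e:sym1} is equivalent to requiring $P_\tau \Sigma_\lambda P_\tau^* = \Sigma_\lambda$ for every $\tau$, and the edge-symmetry condition \eqref{e:sym2} is equivalent to requiring the covariance of the $(0,1)$-marginal $\bar\lambda$ to be invariant under conjugation by $P_{\mathrm{sw}}$. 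Setting up this dictionary is the first step.

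For the forward direction I would read off the entrywise constraints imposed by leaf exchangeability. Writing $\Sigma_\lambda = (\Sigma_{i,j})$, the identity $P_\tau \Sigma_\lambda P_\tau^* = \Sigma_\lambda$ for all $\tau$ says exactly that $\Sigma_{\sigma(i),\sigma(j)} = \Sigma_{i,j}$ whenever $\sigma$ fixes $0$ and acts as a permutation on the leaves. Since the symmetric group acts transitively on $\{1,\ldots,\kap\}$ and $2$-transitively on ordered pairs of distinct leaves, this forces the leaf variances $\Sigma_{i,i}$ ($i\geq 1$) to share a common value, the root-leaf covariances $\Sigma_{0,i}=\Sigma_{i,0}$ to share a common value, and the leaf-leaf covariances $\Sigma_{i,j}$ (distinct $i,j\geq 1$) to share a common value. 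At this point $\Sigma_\lambda$ already has the block pattern of $\Mat_\kap$, with the sole exception that the corner entry $\Sigma_{0,0}$ need not equal the common leaf variance. This is precisely the gap closed by edge symmetry: the marginal $\bar\lambda$ is the centered Gaussian on $\R^2$ with covariance given by the $(0,1)$-submatrix of $\Sigma_\lambda$, and its invariance under the coordinate swap forces $\Sigma_{0,0}=\Sigma_{1,1}$. Combining these observations yields $\Sigma_\lambda = \Mat_\kap(\Sigma_{0,0},\Sigma_{0,1},\Sigma_{1,2}) \in \mSpace_\kap$.

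For the converse, suppose $\Sigma = \Mat_\kap(a,b,c)$ is positive semi-definite, so that $\lambda \sim \mathcal{N}(0,\Sigma)$ is a well-defined centered Gaussian measure, and I would verify the two properties directly. By construction each entry of $\Mat_\kap(a,b,c)$ depends only on the type of the index pair $(i,j)$ --- diagonal, incident to $0$, or a pair of distinct leaves --- and each type is preserved by every $P_\tau$; hence $P_\tau \Sigma P_\tau^* = \Sigma$, which is leaf exchangeability via the dictionary above. Edge symmetry is then immediate, since the $(0,1)$-block of $\Sigma$ is $\left[\begin{smallmatrix} a & b \\ b & a\end{smallmatrix}\right]$, which is invariant under $P_{\mathrm{sw}}$ because its two diagonal entries coincide. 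This establishes $\lambda \in \mPS_\kap$ and completes the proof.

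I do not expect a serious obstacle: the lemma is in essence bookkeeping that translates permutation symmetries of a Gaussian law into entrywise constraints on its covariance. The only point genuinely requiring both hypotheses --- and hence the one to state carefully --- is the equality $\Sigma_{0,0}=\Sigma_{1,1}$, since leaf exchangeability constrains only the leaf coordinates and leaves the root variance free; edge symmetry is exactly what matches the root and leaf diagonal entries so that $\Sigma_\lambda$ lands in $\mSpace_\kap$. A minor point to keep in mind is the degenerate behavior at $\kap=2$, where there is a single unordered leaf pair, so the value $c$ is pinned down directly by $\Sigma_{1,2}=\Sigma_{2,1}$ rather than by transitivity across distinct pairs.
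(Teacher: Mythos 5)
Your proof is correct and follows essentially the same route as the paper's: translate the two symmetry conditions into entrywise constraints on the covariance (leaf exchangeability pins down the leaf-indexed entries, edge symmetry matches $\Sigma_{0,0}$ with the common leaf variance), and for the converse observe that the $\Mat_\kap$ pattern is invariant under leaf permutations and that the $(0,1)$-block is symmetric under the coordinate swap. Your phrasing via conjugation by permutation matrices rather than direct computation of $\bbE[X_iX_j]$ is a cosmetic difference, and your explicit remark that edge symmetry is what closes the $\Sigma_{0,0}=\Sigma_{1,1}$ gap is, if anything, slightly more careful than the paper's write-up.
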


\subsection{Markov local-field equation on the $\kappa$-star} 
\subsubsection{Definition of the Markov local-field equation}
\label{ss:kap_MLFE}
\begin{definition}[$\kappa$-regular Markov local-field equation]
\label{d:mlfe}
    Let $\kap \in \bbN$, $\lambda \in \mP(\R^{1 + \kap})$  and let $U, W: \R \rightarrow \R$ be continuously differentiable functions. A solution to the \emph{$\kappa$-regular Markov local-field equation}, with potentials $(U, W)$ and initial law $\lambda$, is a tuple {that satisfies the following properties:}
    \begin{equation*}
        \Big((\Omega, \mF, \bbF, \bbP), (\mlfe, \gamma), (\mathbf{B}, \mathbf{X})\Big)
    \end{equation*}
    such that
    \begin{enumerate}
        \item $(\Omega, \mF, \bbP)$ is a probability space equipped with a filtration $\bbF = (\mF_t)_{t \geq 0}$;
        \item $\mathbf{B} := (B_v)_{v \in \{0, \ldots, \kappa\}}$ is a family of independent $1$-dimensional $\bbF$-Brownian motions;
        \item $\X := (X_v)_{v \in \{0, \ldots, \kappa\}}$ is a family of continuous $1$-dimensional  $\bbF$-adapted processes. Moreover, $\X$ is a Markov process with respect to its natural filtration;
        \item $\mlfe = \mL(\X)$, $\mlfe_0 = \lambda$ and $\mlfe \in \mPS_\kap^\infty$;
        \item $\gamma$ is a measurable map from $[0, \infty) \times \R^d\times \R^d$ to $\R^d$ that satisfies for all $t > 0$,
        \begin{equation}
            \gamma(t, X_0(t), X_1(t)) = \bbE\bigg[ U'(X_0(t)) + \sum_{v = 1}^\kap W'(X_0(t) - X_v(t)) \bigg| X_0(t), X_1(t) \bigg], \quad a.s.
            \label{e:gamma}
        \end{equation}
        \item $\X$ satisfies the following system of SDEs for $t \in (0, \infty)$:
        \begin{equation}
        \label{e:mlfe}
        \begin{split}
            dX_0(t) &= -\bigg(U'\big( X_0(t)\big) + \sum_{v = 1}^\kap W' \big( X_0(t) - X_v(t)\big)\bigg)dt + \sqrt{2} dB_0(t), \\
            dX_v(t) &= -\gamma(t, X_v(t), X_0(t)) dt + \sqrt{2} dB_v(t), \quad v = 1, \ldots, \kap.
        \end{split}
        \end{equation}
        \item For each $v \in \{1, \ldots, \kap\}$ and $T \in (0, \infty)$, we have
        \begin{align}
            \int_0^T \Big(\big| \gamma\big(t, X_0(t), X_v(t)\big) \big| ^2 + |\gamma\big(t, X_v(t), X_0(t)\big)\big|^2 \Big) dt < \infty \quad \text{a.s.}
            \label{e:mlfeInt}
        \end{align}
    \end{enumerate}
    \end{definition}{For ease of future reference, we define the drift $b$ as follows:}
                    \begin{align}
            b(\x) := U'(x_0) + \sum_{v = 1}^\kap  W'(x_0 - x_v), \quad \x \in \R^{1 + \kap}.
            \label{e:b}
        \end{align}
        When the underlying probability space and Brownian motion are clear, we denote a solution to the $\kap$-MLFE simply by $(\mlfe, \gamma, \X)$, or  simply $(\mlfe, \gamma)$ when we do not need explicit reference to the stochastic process $\X$. We also recall the definition of linear growth solution, which corresponds to Definition 3.9 of \cite{hu2024htheorem}.  

    \begin{definition}[Linear growth solution to $\kap$-MLFE]\label{d:linGrowth}
Let $(\mlfe, \gamma)$ be a solution to the $\kap$-MLFE on $[0, T]$ with coefficients $(\alpha, \beta)$ and initial condition $\lambda \in \mP(\R^{1 + \kap})$. We call $(\mlfe, \gamma)$ a \emph{linear growth solution} to the $\kap$-MLFE if
there exists $C \in (0, \infty)$ depending only on $( \kappa, d, T, b, \lambda)$ such that\begin{equation}
    \label{e:linGrowth}
        \sup_{t \in [0, T]} \operatorname*{\bar{\mlfe}_t-ess\,sup}_{x, y \in \R^d} \frac{|\gamma(t, x, y)|} {1 + |x| + |y|} \leq C.
    \end{equation}
\end{definition}

    \begin{definition}[Gaussian Markov local-field equation]\normalfont Given $\alpha, \beta \in \R$, define the potentials \begin{equation}
\label{e:potentials2}
    U(x) := \frac{\alpha + \beta}{2}|x|^2, \quad W(x) = -\frac{\beta}{2\kap}|x|^2, \quad x \in \R.
    \end{equation}
We refer to a solution to the $\kap$-MLFE with potentials $(U, W)$ as in \eqref{e:potentials2} a \emph{solution to the $\kap$-GMLFE with coefficients $(\alpha, \beta)$}.
\end{definition}

\subsubsection{Well-posedness of the $\kap$-GMLFE} In \cite{hu2024htheorem}, well-posedness of the general $\kappa$-MLFE was established under the condition that the gradient of the interaction potential $\nabla W$ {is} bounded, which clearly is not satisfied by \eqref{e:potentials2}. However, the results of long-time behavior in \cite{hu2024htheorem} hold even when $\nabla W$ is unbounded {as long as the $\kap$-MLFE is well-posed}, and we will leverage these results in the sequel to characterize the long-time behavior of the 2-GMLFE.
\begin{theorem}[Well-posedness of the $\kap$-GMLFE]
\label{t:r:wp}
    Let $\kap \in \bbN$ with $\kap \geq 2$ and $\alpha, \beta \in \R$. Suppose $\lambda \in \mPS_{\kap}$ is a non-degenerate centered Gaussian measure. Then there exists a linear growth solution $(\mlfe, \gamma)$ to the $\kap$-GMLFE on $[0, \infty)$ with coefficients $(\alpha, \beta)$ and initial condition $\lambda$ that is unique in the class of linear growth solutions. Moreover $\mlfe$ is the law of a Gaussian process such that the time marginals $\mu_t$ have law $\mathcal{N}(0, \Sigma_{\mu_t})$ {where $\{\Sigma_{\mu_t}\}_{t \geq 0}$ is a family of non-degenerate covariance matrices such that} $t \mapsto \Sigma_{\mlfe_t}$ is a continuous trajectory.
\end{theorem}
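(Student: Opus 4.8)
The plan is to exploit the quadratic form of the potentials \eqref{e:potentials2} to collapse the GMLFE to a finite-dimensional ODE for the covariance, and then analyze that ODE. First I would record that for \eqref{e:potentials2} the drift \eqref{e:b} is linear, $b(\x) = \alpha x_0 + \tfrac{\beta}{\kap}\sum_{v=1}^{\kap} x_v$, so the center equation in \eqref{e:mlfe} is already a linear SDE. For a centered Gaussian marginal $\mlfe_t \in \mPS_\kap$, Lemma \ref{l:gSym} gives $\Sigma_{\mlfe_t} = \Mat_\kap(a(t),b(t),c(t))$, and the conditional expectation \eqref{e:gamma} is exactly the linear regression of $b(\X(t))$ onto $(X_0(t),X_1(t))$. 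Solving the $2\times 2$ normal equations for the edge-marginal covariance, whose determinant is $a^2-b^2$, I obtain $\gamma(t,x,y) = g_1(t)x + g_2(t)y$, where $g_1,g_2$ are ratios of quadratics in $(a,b,c)$ over $a^2-b^2$; note $a^2-b^2>0$ exactly when $\Sigma_{\mlfe_t}\in\mSpace_\kap^+$, and that $g_1,g_2$ depend on $(a,b,c)$ only through scale-invariant ratios, so they are controlled by the condition number of $\Sigma_{\mlfe_t}$ alone. Substituting this linear feedback into \eqref{e:mlfe} produces a time-inhomogeneous linear SDE with a drift matrix $A(a,b,c)$, and its covariance obeys the Lyapunov equation $\dot\Sigma = -A\Sigma - \Sigma A^* + 2\mathsf{I}_{1+\kap}$. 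Since both $A$ and $\Sigma\in\mSpace_\kap$ commute with the leaf-permutation action, the right-hand side remains in $\mSpace_\kap$, so through the isomorphism $\Mat_\kap$ this becomes an autonomous ODE $(\dot a,\dot b,\dot c)=F(a,b,c)$ with $F$ real-analytic on the open set $\Mat_\kap^{-1}(\mSpace_\kap^+)$.

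On $\Mat_\kap^{-1}(\mSpace_\kap^+)$ the field $F$ is locally Lipschitz, so Picard--Lindel\"of yields a unique maximal solution starting from $\Mat_\kap^{-1}(\Sigma_\lambda)$. Given such a trajectory on $[0,T^*)$ I would run the construction in reverse: define $g_1,g_2$ and hence the Lipschitz linear drift, solve the resulting linear SDE from the Gaussian law $\lambda$ to obtain a Gaussian process $\X$, and check by It\^o's formula that its covariance solves the same Lyapunov equation, hence equals $\Mat_\kap(a,b,c)$ by ODE uniqueness. This self-consistency shows $(\mlfe,\gamma)$ with $\mlfe=\mL(\X)$ satisfies \eqref{e:gamma}; membership $\mlfe\in\mPS_\kap^\infty$ follows from Lemma \ref{l:gSym}, the integrability \eqref{e:mlfeInt} and linear-growth bound \eqref{e:linGrowth} from boundedness of $g_1,g_2$ on compact time intervals, and continuity of $t\mapsto\Sigma_{\mlfe_t}$ from $F\in C^1$.

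The analytic heart of the argument is to show $T^*=\infty$ with $\Sigma_{\mlfe_t}$ staying non-degenerate, i.e. that the trajectory neither blows up nor reaches $\partial\mSpace_\kap^+$ in finite time. Strict positivity at each fixed $t>0$ where the solution exists is automatic from the variation-of-constants representation $\Sigma_t = \Psi(t,0)\Sigma_\lambda\Psi(t,0)^* + 2\int_0^t \Psi(t,s)\Psi(t,s)^*\,ds$, where the fundamental matrix $\Psi$ of $\dot\Psi=-A\Psi$ is invertible; the difficulty is the \emph{quantitative} control needed to extend past $T^*$. By the scale-invariance above, $\|A\|$ is bounded by a power of the condition number of $\Sigma$, so once the condition number is controlled on $[0,T]$ the drift grows at most linearly in $\|\Sigma\|$, excluding finite-time blow-up, while the additive $2\mathsf{I}_{1+\kap}$ supplies a lower bound on the smallest eigenvalue. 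I expect the main obstacle to be closing this coupled estimate, namely showing $\int_0^T\|A_s\|\,ds<\infty$ for every finite $T$, and I would do so using the explicit spectrum of $\Mat_\kap(a,b,c)$ (the eigenvalue $a-c$ of multiplicity $\kap-1$ together with the two eigenvalues of the reduced $2\times 2$ block) to track the extreme eigenvalues directly and exploit the regularizing effect of the noise. With $T^*=\infty$ and non-degeneracy on compacts established, existence on $[0,\infty)$ follows.

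For uniqueness in the class of linear growth solutions, the key reduction is that \emph{any} linear growth solution is centered Gaussian: centering follows from the odd symmetry of the drift under $\x\mapsto-\x$, and once Gaussianity is known its covariance flow satisfies the closed ODE $F$, so two solutions coincide by the ODE uniqueness of the previous step. Gaussianity itself is the second main obstacle, precisely because the moment equations do not close for a non-Gaussian law---the cross moments $\bbE[X_i\,\gamma(t,X_v,X_0)]$ are not determined by the covariance, reflecting the discontinuity of conditional expectations stressed in the introduction. I would address this by comparing an arbitrary linear growth solution to the Gaussian solution constructed above via a synchronous coupling and a Gronwall estimate: since the Gaussian $\gamma$ is affine in its spatial arguments with coefficients locally Lipschitz in $(a,b,c)$, the difference of the two solutions can be controlled by itself, forcing the arbitrary solution to agree with the Gaussian one on $[0,\infty)$ and thereby completing the proof.
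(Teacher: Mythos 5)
Your existence argument follows the same route as the paper (reduce to a covariance ODE on $\mSpace_\kap$ via $\Mat_\kap$, apply Picard--Lindel\"of, then rule out blow-up and degeneration), but the step you yourself flag as the ``analytic heart'' --- closing the coupled estimate between $\|A\|$ and the condition number of $\Sigma$ --- is left open, and the bootstrap you sketch is circular as stated: you need the condition number to bound the drift, and the drift to control the condition number. The paper avoids this entirely with a purely algebraic observation (Lemma \ref{l:pdEstimate}): on the whole positive-definite cone $\mSpace_\kap^+$ the regression coefficients satisfy $|f(a,b,c)|\le 2$ and $|g(a,b,c)|\le 1$ \emph{uniformly}, with no dependence on the condition number, so $\sfL$ is uniformly bounded wherever the trajectory is positive definite (Lemma \ref{l:sF_sL_estimates}). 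Blow-up is then excluded by a one-line Gr\"onwall bound, and the boundary is avoided by applying Jacobi's formula to $1/\det(\sfV_t)$, which yields $\tfrac{d}{dt}\det(\sfV_t)^{-1}\le C\det(\sfV_t)^{-1}$ and hence a uniform positive lower bound on the determinant on compacts (Lemma \ref{l:riccati_wp}). You would need to supply something equivalent to these two facts; tracking extreme eigenvalues of $\Mat_\kap(a,b,c)$ could in principle substitute for the determinant argument, but the uniform bound on $f,g$ is the ingredient that decouples the estimates, and it is absent from your plan.

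The uniqueness argument has a more serious gap. You propose a synchronous coupling of an arbitrary linear growth solution $\X^\dag$ with the Gaussian one, plus a pathwise Gr\"onwall estimate. But the drift of $\X^\dag$ is $\gamma^\dag$, the conditional expectation under the \emph{unknown} law $\mu^\dag$, and the quantity you must control is $\gamma^\dag(t,X_v^\dag,X_0^\dag)-\gamma(t,X_v,X_0)$. This splits into a Lipschitz term in the state difference plus the term $\gamma^\dag(t,x,y)-\gamma(t,x,y)$, which measures the discrepancy between the conditional-mean operators of two different measures at the same point; it is not controlled by any pathwise distance between the coupled processes, which is exactly the discontinuity of conditional expectations you cite. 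Your fallback --- first prove $\mu^\dag$ is Gaussian so that its covariance obeys the closed ODE --- begs the question, since Gaussianity of $\X^\dag$ requires its drift to be affine, i.e.\ requires knowing $\gamma^\dag$ already. The paper closes the loop in relative entropy rather than pathwise: Girsanov (Lemma \ref{l:entropyEstimate}) expresses $\mH(\mu^\dag[T]\,|\,\mu[T])$ as a time integral of the squared drift difference, the weighted Pinsker inequality (Lemma \ref{l:pinsker}) bounds the squared difference of conditional means by the conditional relative entropy times the constant $\Lambda_\eps^{\tilde\mu_t}$ computed explicitly for Gaussian conditionals in Lemma \ref{l:Lambda}, and the chain rule plus data-processing inequality feed this back into $\mH(\mu^\dag[t]\,|\,\mu[t])$, so that Gr\"onwall forces the entropy to vanish without ever assuming $\mu^\dag$ is Gaussian. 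Some device of this kind --- a metric on laws under which the conditional-expectation map is Lipschitz along the flow --- is indispensable here, and your proposal does not contain one.
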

Theorem \ref{t:r:wp} is proved in Section \ref{ss:wpProof}, and is an immediate consequence of Proposition \ref{p:existence} and Proposition \ref{p:uniqueness}. The {proof of} existence follows by identifying a solution to the $\kap$-GMLFE through its covariance matrix, which solves a nonlinear matrix Riccati equation. Uniqueness is established through an entropy argument. 
\begin{remark}[A note about initial conditions]\normalfont \label{r:ic}
        We overcome the usual difficulties associated with studying conditional McKean-Vlasov equations with unbounded drift (see Section \ref{s:mot} for a discussion of such difficulties) by exploiting the Gaussian structure of our setting. However, this restricts our results to the case of Gaussian initial conditions, which means that Theorem \ref{t:r:wp} does not imply that the $\kap$-GMLFE is well-posed in the sense of Definition 3.9 of \cite{hu2024htheorem}, but rather only when restricted to the set of Gaussian initial conditions. Nevertheless, the well-posedness implied by Theorem \ref{t:r:wp} is sufficient for our results.
\end{remark}

\subsection{Gaussian local-field equation on the line.}

\subsubsection{Definition of the Gaussian local-field equation} In this section, we recall the definition of the LFE from Definition 3.5 of \cite{lacker2023marginal}, where the drift has the form given in \eqref{e:introLFE}. In the rest of the paper, we restrict ourselves to the case of $\kap =2$. 

\begin{definition}[Local-field equation on $\bbT_2$]
\label{d:g_lfe}
    Let $\alpha, \beta \in \R$ and $\lambda \in \mP(\R^3)$. A solution to the \emph{2-regular local-field equation} (henceforth abbreviated as \emph{2-LFE}) with  with potentials $(U, W)$ and initial law $\lambda$, is a tuple that satisfies the following properties \[\Big((\Omega, \mF, \bbF, \bbP), (\Gamma, \lfe), (\bfB, \bfZ)\Big)\] such that the following properties hold
    \begin{enumerate}
        \item $(\Omega, \mF, \bbP)$ is a probability space equipped with a filtration $\bbF = (\mF_t)_{t \geq 0}$;
        \item $\bfB = (B_v)_{v \in \{0, \pm 1\}}$ is a family of independent 1-dimensional $\bbF$-Brownian motions;
        \item $\bfZ = (Z_v)_{v \in \{0, \pm 1\}}$ is a family of continuous 1-dimensional $\bbF$-adapted processes;
        \item $\lfe$ satisfies $\lfe = \mathcal{L}(\bfZ)$ with $\lfe_0 = \lambda$ and $\lfe \in \mPS_2^\infty$;
        \item $\Gamma$ is a progressively measurable function from $\R_+ \times \mC \times \mC$ to $\R$ and satisfies for all $t > 0$, 
        \begin{equation}
    \Gamma(t,Z_0, Z_1) = \mathbb{E}\bigg[ U'\big( Z_0(t)\big) + \sum_{v \in\{ \pm 1\}} W'\big(Z_0(t) - Z_v(t)\big) \,\bigg|\, Z_0[t], Z_1[t]\bigg], \quad a.s.
    \label{e:Gamma}
\end{equation}
\item $\bfZ$ satisfies the following system of SDEs for $t \in (0, \infty)$:
\begin{equation}
\label{eq:model:localEquations}
    \begin{aligned}
    dZ_0(t) &= - \bigg( U'\big( Z_0(t)\big) + \sum_{v \in\{ \pm 1\}} W'\big(Z_0(t) - Z_v(t)\big)\bigg) dt + \sqrt{2} dB_0(t) ,\\
                dZ_{v}(t) &= -\Gamma\big(t, Z_{v}[t], Z_0[t]\big) dt + \sqrt{2} dB_v(t), \quad v \in \{\pm 1\}.
    \end{aligned}
\end{equation}
\item For each $i \in \{\pm 1\}$ and $T > 0$, we have
\begin{equation}
    \int_0^T \Big( |\Gamma(t,Z_0, Z_i)|^2 + |\Gamma(t,Z_i, Z_0)|^2 \Big)dt < \infty,\quad a.s. \label{e:lfe_int}
\end{equation}
\end{enumerate}
\end{definition}
    As in the case of the MLFE, when the underlying probability space and Brownian motion are clear, we denote a solution to the 2-LFE simply by $(\lfe, \Gamma, \bfZ)$, or  just $(\lfe, \Gamma)$ when we do not need to explicitly reference the stochastic process $\bfZ$.  

    \begin{definition}[Gaussian local-field equation]\normalfont\label{d:lfe} Given $\alpha, \beta \in \R$, let $(U, W)$ be as in \eqref{e:potentials}.
We refer to a solution to the 2-LFE with potentials $(U, W)$ as in \eqref{e:potentials} a \emph{solution to the 2-GLFE with coefficients $(\alpha, \beta)$}.
\end{definition}
\begin{remark}[Correspondence with \cite{lacker2023marginal}]\normalfont
Definition \ref{d:lfe} corresponds to Definition 3.5 of \cite{lacker2023marginal} in the particular case of \[d = 1, \quad \kap = 2, \quad b(\x) = \alpha x_0 + \frac{\beta}{2}(x_1 + x_{-1}),\quad \sigma = \sqrt{2},\] with the added condition in (4) that $\rho \in \mPS_2^\infty$. 
\end{remark}

\begin{proposition}[Well-posedness of 2-GLFE]
    Let $\alpha, \beta \in \R$ and $\sigma > 0$. There exists a  solution $\lfe$ to the 2-GLFE with coefficients $(\alpha, \beta)$ and initial condition $\lambda \sim \mN(0, \sigma \mathsf{I}_3)$ that is unique in law. Moreover, $\rho$ is the $\{0, \pm 1\}$-marginal law of \eqref{e:zInfinite} with $(U, W)$ as in \eqref{e:potentials} and is the law of a Gaussian process on $\R_+$ taking values in $\R^3$.
\end{proposition}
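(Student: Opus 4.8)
The plan is to obtain $\lfe$ as the $\{0,\pm1\}$-marginal law of the infinite system \eqref{e:zInfinite} with quadratic potentials \eqref{e:potentials}, and to read off all four assertions---existence, uniqueness in law, the marginal identification, and Gaussianity---from the general marginal-dynamics theory of \cite{lacker2023marginal, lacker2023localweakconvergence}, specialized to the affine drift $b(\x)=\alpha x_0+\tfrac{\beta}{2}(x_1+x_{-1})$ and diffusion coefficient $\sqrt{2}$. The only genuinely new inputs are that linearity of the dynamics forces every law in sight to be Gaussian, and that the additional symmetry requirement $\lfe\in\mPS_2^\infty$ imposed in Definition \ref{d:g_lfe} (beyond the solution concept of Definition 3.5 of \cite{lacker2023marginal}) is met.

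First I would record well-posedness and Gaussianity of the infinite system. The affine drift $b$ is globally Lipschitz with linear growth, hence satisfies the standing hypotheses of \cite{lacker2023localweakconvergence}; Theorem 3.3 there provides a unique-in-law solution $\{Z_v\}_{v\in\bbZ}$ to \eqref{e:zInfinite} started from i.i.d.\ $\mN(0,\sigma)$ data, realized as the local weak limit of the finite cycle systems \eqref{e:zOU} on $\Cn$. Each finite system \eqref{e:zOU} is a linear Ornstein--Uhlenbeck SDE driven by independent Brownian motions from a Gaussian initial law, so its finite-dimensional distributions are Gaussian; since Gaussianity of finite-dimensional distributions is preserved under weak limits, the limiting field $\{Z_v\}_{v\in\bbZ}$ is a Gaussian process. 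In particular its $\{0,\pm1\}$-marginal, the candidate $\lfe$, is the law of a Gaussian process on $\R_+$ with values in $\R^3$, which is the final assertion of the proposition.

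Next I would invoke Corollary 3.14 of \cite{lacker2023marginal}, which under the same Lipschitz and linear-growth hypotheses identifies $\lfe$ as the unique solution (in the sense of Definition 3.5 therein) of the 2-LFE with drift $b$ and diffusion $\sqrt 2$. By the correspondence remark following Definition \ref{d:lfe} and the choice \eqref{e:potentials}, this 2-LFE is exactly the 2-GLFE with coefficients $(\alpha,\beta)$. This already yields existence together with uniqueness in law: any solution in the sense of Definition \ref{d:g_lfe} is in particular a Definition 3.5 solution, so it must agree with $\lfe$. It remains to verify that $\lfe$ itself satisfies the extra constraint $\lfe\in\mPS_2^\infty$. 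This follows from the invariance of \eqref{e:zInfinite} under the reflection $v\mapsto -v$ and under integer translations, which, together with the exchangeable initial law, give leaf exchangeability $(Z_0,Z_1,Z_{-1})\stackrel{(d)}{=}(Z_0,Z_{-1},Z_1)$ and edge symmetry $(Z_0,Z_1)\stackrel{(d)}{=}(Z_1,Z_0)$ at every time $t$. The integrability \eqref{e:lfe_int} is part of the Definition 3.5 solution concept and is already guaranteed by Corollary 3.14; alternatively it follows from conditional Jensen, which bounds $\bbE[|\Gamma(t,\cdot,\cdot)|^2]$ by the finite and locally bounded quantity $\bbE[|b(\bfZ(t))|^2]$.

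The step I expect to be the main obstacle is not the LFE identification itself but the verification that the affine---and in particular unbounded---drift falls within the precise hypotheses under which \cite{lacker2023marginal, lacker2023localweakconvergence} state their results; unlike the MLFE of \cite{hu2024htheorem}, whose cited well-posedness required bounded $W'$, here one must confirm that global Lipschitzness with linear growth suffices both for the local weak convergence and for the marginal characterization with its uniqueness. A secondary point is making the transfer of Gaussianity through the local weak limit rigorous, i.e.\ matching finite-dimensional distributions of the truncated systems and passing to the limit on each fixed finite set of times and vertices; and confirming, as above, that restricting to $\mPS_2^\infty$ leaves the uniqueness statement intact, which it does since narrowing the admissible class can only preserve uniqueness once a symmetric solution has been exhibited.
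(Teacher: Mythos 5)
Your proposal is correct and follows essentially the same route as the paper: both existence/uniqueness and the marginal identification are read off from the marginal-dynamics results of \cite{lacker2023marginal} (the paper cites Theorem 3.12 rather than Corollary 3.14), and Gaussianity comes from the linear structure of the infinite system \eqref{e:zInfinite}. The only cosmetic difference is that the paper obtains Gaussianity directly from the unique strong solution of the infinite linear SDE (via Theorem 3.1 of \cite{lacker2023localweakconvergence}) rather than by passing Gaussianity through the local weak limit of the finite cycle systems.
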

\begin{proof}
    The existence and uniqueness in law of the solution follows from Theorem 3.12 of \cite{lacker2023marginal}. By Theorem 3.12, $\lfe$ is the $\{-1, 0, 1\}$-marginal of an equation of the form of equation (1.4) in \cite{lacker2023marginal}, and thus lies in $\mPS_2^\infty$. By standard results (e.g., see Theorem 3.1 of \cite{lacker2023localweakconvergence}), the SDE in \eqref{e:zInfinite} with $(U, W)$ as in \eqref{e:potentials} has a unique strong solution $Z$, which is an (infinite-dimensional) Gaussian process. Since Theorem 3.12 of \cite{lacker2023marginal} shows that $\rho$ is the $\{0, \pm1\}$ marginal of the law of $Z$, $\rho$ is the law of a Gaussian process on $\mathbb{R}_+$ taking values in $\mathbb{R}^3$.    
\end{proof}

\subsection{Long-time behavior of the 2-GLFE and 2-GMLFE}

\subsubsection{The stationary distribution} We now present our results about the long-time behavior of the 2-GLFE and 2-GMLFE on the line. First, we introduce the common limiting distribution.

\begin{definition}[Limiting Gaussian distribution]
\label{d:r:stat} Let $\alpha, \beta \in \R$ with $\alpha > |\beta|$, and define the symmetric matrix
\begin{equation}
    \Sigma_\stat := \Mat_2(\sigma_0^\stat, \sigma_1^\stat, \sigma_2^\stat) = \begin{bmatrix}
        \sigma_0^\stat & \sigma_1^\stat & \sigma_1^\stat \\
        \sigma_1^\stat& \sigma_0^\stat &\sigma_2^\stat \\ 
        \sigma_1^\stat& \sigma_2^\stat& \sigma_0^\stat 
    \end{bmatrix},
    \label{e:SigmaStat}
\end{equation}
with
\begin{equation}
    \sigma^\stat_r := \frac{(-\beta)^r}{ \sqrt{\alpha^2 - \beta^2}\big(\alpha + \sqrt{\alpha^2 - \beta^2}\big)^r}, \quad r = 0, 1, 2.
    \label{e:sigma_pi}
\end{equation}Let $\stat$ denote the centered Gaussian measure $\mN(0, \Sigma_\stat)$.
\end{definition}
\begin{remark}[Properties of $\pi$] \normalfont
\label{r:piNonDegen}
    Let $\delta := \sqrt{\alpha^2 - \beta^2}$ and $\phi := -\beta/(\alpha + \delta)$. Then $\sigma_r^\stat = \phi^r/\delta$ and we see that $\sigma_0^\stat \sigma_2^\stat = (\sigma_1^\stat)^2$. Moreover, a simple calculation shows that the determinant of $\Sigma_\stat$ satisfies
    \begin{equation}
        \det\big(\Sigma_{\stat}\big) = (\sigma_0^\stat - \sigma_2^\stat)\big((\sigma_0^\stat)^2 - (\sigma_1^\stat)^2\big) =\frac{4}{\sqrt{\alpha^2 - \beta^2}(\alpha + \sqrt{\alpha^2 - \beta^2})^2}.
    \end{equation}
    Thus $\Sigma_\stat$ is non-degenerate when $\alpha > |\beta|$.  Moreover, since $\stat$ is a Gaussian measure, its conditional independence properties can be read off from its precision matrix $\Sigma_{\stat}^{-1}$. By Definition \ref{d:r:stat} and Definition \ref{d:mSpace}, for $i, j \in \{0, \pm 1\}$ we have
    \begin{equation}
        (\Sigma_\stat^{-1})_{i,j} = \frac{1}{|\Sigma_\stat|}\left \{ \begin{aligned}
            &(\sigma_0^\stat)^2 - (\sigma_2^\stat)^2, &\quad & i = j = 0, \\
            &(\sigma_0^\stat)^2 - (\sigma_1^\stat)^2, &\quad & i = j \neq 0, \\
            & \sigma_1^\stat(\sigma_2^\stat - \sigma_0^\stat), &\quad &|i - j| = 1, \\
            & (\sigma_1^\stat)^2 - \sigma_0^\stat\sigma_2^\stat, &\quad  &|i - j| = 2, \\
        \end{aligned}\right.
        \label{e:covStatInv}
    \end{equation}
    Thus $(\Sigma_\stat^{-1})_{1, -1} = (\Sigma_\stat^{-1})_{-1, 1} = 0$ and for $(Y_0, Y_1, Y_{-1}) \sim \stat$, $Y_{-1}$ and $Y_1$ are conditionally independent given $Y_0$. Therefore $\stat$ is a Markov random field (MRF), in the sense that it exhibits the following conditional independence property:
    \begin{equation}
    \label{e:pi-mrf}
        \stat(x_{-1}|x_0, x_1) = \bar{\stat}(x_{-1}|x_0)
    \end{equation}
\end{remark}

\subsubsection{Long-time behavior}

Our first result shows that the 2-GLFE converges exponentially fast to $\stat$. 

\begin{theorem}[Exponential convergence of the  2-GLFE]
\label{t:r:lfeConv}
    Let $\alpha, \beta \in \R$ with $\alpha > |\beta|$, and let $\stat$ be as in Definition \ref{d:r:stat}. Suppose $\lambda = \mN(0, \sigma \mathsf{I}_3)$, and let $\lfe$ be the unique solution to the  2-GLFE with coefficients $(\alpha, \beta)$ and initial condition $\lambda$, and let $\rho_t$ denote the $t$-marginal {of $\rho$}. Then there exist constants $C, c \in (0, \infty)$ that depend only on $(\alpha, \beta, \lambda)$ such that
    \begin{equation}
        d_{\text{TV}}(\lfe_t, \stat) \leq Ce^{-ct}, \quad t \geq 0.
        \label{e:lfeConv}
    \end{equation}
\end{theorem}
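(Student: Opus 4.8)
The plan is to reduce everything to an explicit Gaussian computation. By the preceding well-posedness proposition, $\lfe$ is the $\{0,\pm1\}$-marginal of the infinite-dimensional Ornstein--Uhlenbeck system \eqref{e:zInfinite} with potentials \eqref{e:potentials}, which I would run from the translation-invariant product initial law $\bigotimes_{v \in \bbZ}\mN(0,\sigma)$ (whose $\{0,\pm1\}$-marginal is exactly $\lambda$). In particular each time-marginal $\lfe_t = \mN(0,\Sigma_{\lfe_t})$ is a centered Gaussian, and by Lemma \ref{l:gSym} its covariance lies in $\mSpace_2$. Since $\stat$ is a centered non-degenerate Gaussian (Remark \ref{r:piNonDegen}), the problem splits into (i) computing $\Sigma_{\lfe_t}$, (ii) showing $\|\Sigma_{\lfe_t}-\Sigma_\stat\| \le Ce^{-ct}$, and (iii) converting this covariance bound into the total-variation bound \eqref{e:lfeConv}.

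For (i) and (ii) I would diagonalize the dynamics by Fourier transform on $\bbZ$. Writing the drift of \eqref{e:zOU} as $-AZ$ with $(Az)_v = \alpha z_v + \tfrac{\beta}{2}(z_{v+1}+z_{v-1})$, the convolution operator $A$ has real symbol $a(\theta) = \alpha + \beta\cos\theta$, which satisfies $a(\theta) \ge \alpha - |\beta| > 0$ under the hypothesis $\alpha > |\beta|$; this strict lower bound (the spectral gap of $A$) is what will drive the exponential rate. Because the initial condition is i.i.d. and the dynamics are translation invariant, $Z(t)$ is a spatially stationary Gaussian field, so $k_t(u-v) := \mathrm{Cov}(Z_u(t),Z_v(t))$ depends only on $u-v$. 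The coordinatewise It\^o computation for the covariances closes under translation invariance and becomes, in Fourier variables, the scalar linear ODE $\partial_t \hat{k}_t(\theta) = -2a(\theta)\hat{k}_t(\theta) + 2$ with $\hat{k}_0 \equiv \sigma$, whose solution is
\begin{equation*}
    \hat{k}_t(\theta) = \frac{1}{a(\theta)} + \Big(\sigma - \frac{1}{a(\theta)}\Big)e^{-2a(\theta)t}.
\end{equation*}
The entries of $\Sigma_{\lfe_t}$ are the Fourier coefficients $k_t(r) = \tfrac{1}{2\pi}\int_{-\pi}^\pi e^{ir\theta}\hat{k}_t(\theta)\,d\theta$ for $r \in \{0,\pm1,\pm2\}$. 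Letting $t \to \infty$, the first term contributes the Fourier coefficients of $1/a(\theta)$, and the standard integral $\tfrac{1}{2\pi}\int_{-\pi}^\pi \tfrac{\cos(r\theta)}{\alpha+\beta\cos\theta}\,d\theta = \phi^{|r|}/\delta$ (with $\delta = \sqrt{\alpha^2-\beta^2}$ and $\phi = -\beta/(\alpha+\delta)$) matches $\sigma_r^\stat$ from \eqref{e:sigma_pi} exactly, so this limit is $\Sigma_\stat$. For the rate, subtracting and bounding $e^{-2a(\theta)t} \le e^{-2(\alpha-|\beta|)t}$ together with the boundedness of $\sigma - 1/a(\theta)$ on $[-\pi,\pi]$ yields $\|\Sigma_{\lfe_t}-\Sigma_\stat\| \le Ce^{-ct}$ with $c = 2(\alpha-|\beta|)$.

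For (iii) I would pass from covariances to total variation by a standard Gaussian comparison. Pinsker's inequality gives $d_{\text{TV}}(\lfe_t,\stat)^2 \le \tfrac12 \mH(\lfe_t|\stat)$, and for centered Gaussians the relative entropy has the closed form $\mH(\mN(0,\Sigma_1)|\mN(0,\Sigma_2)) = \tfrac12[\mathrm{tr}(\Sigma_2^{-1}\Sigma_1) - 3 + \ln(\det\Sigma_2/\det\Sigma_1)]$. Since $\Sigma_\stat$ is non-degenerate, a second-order Taylor expansion of this expression about $\Sigma_1 = \Sigma_\stat$ shows the first-order term cancels, so $\mH(\lfe_t|\stat) = O(\|\Sigma_{\lfe_t}-\Sigma_\stat\|^2)$ once $\|\Sigma_{\lfe_t}-\Sigma_\stat\|$ is small; hence $d_{\text{TV}}(\lfe_t,\stat) \le C'\|\Sigma_{\lfe_t}-\Sigma_\stat\| \le C''e^{-ct}$ for all $t \ge T_0$, and enlarging the constant while using $d_{\text{TV}} \le 1$ on $[0,T_0]$ gives \eqref{e:lfeConv} for all $t \ge 0$.

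I expect the main obstacle to be the rigorous justification of the covariance computation on the infinite lattice: confirming that the product initial condition $\bigotimes_v \mN(0,\sigma)$ is indeed the one delivered by the marginal correspondence of the well-posedness proposition, and that the coordinatewise covariance system genuinely closes under translation invariance and is solved by the symbol $\hat{k}_t$ above (the field $Z(t)$ is not $\ell^2$-valued, so one must argue at the level of finite covariance entries rather than as an operator equation). If a cleaner justification is needed, one can instead compute on the finite cycle $\Cn$ via circulant diagonalization and pass $n \to \infty$, using the convergence already recorded in Figure \ref{f:commutative}. Once the correct limit $\Sigma_\stat$ and exponent $2(\alpha-|\beta|)$ are pinned down, the total-variation step is routine given the non-degeneracy of $\Sigma_\stat$.
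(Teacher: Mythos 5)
Your proposal is correct and follows the same overall strategy as the paper: compute the Gaussian time-marginals of the 2-GLFE explicitly, show the covariance converges to $\Sigma_\stat$ at rate $e^{-2(\alpha-|\beta|)t}$, and then convert the covariance estimate into a total-variation bound. The differences are in execution. For the covariance computation, the paper works on the finite cycle $\Cn$ (Lemma \ref{l:nCov}, via circulant diagonalization), passes to the limit $n\to\infty$ to obtain the Bessel-function representation \eqref{e:sigma_r} in Proposition \ref{p:lfeSol}, and then uses the Laplace-transform identity $2\int_0^\infty e^{-2\alpha s}I_r(2s|\beta|)\,ds = |\beta|^r/(\delta(\alpha+\delta)^r)$ together with $|I_r(y)|\le e^{|y|}$; your direct Fourier diagonalization on $\bbZ$ produces exactly the same integrand (your $\hat k_t(\theta)$ equals the paper's $\varphi(t,2\cos\theta)$) and the same rate, and the rigor concern you flag about the infinite-lattice covariance system is real but is resolved precisely by the finite-cycle fallback you mention, which is the route the paper actually takes (it also needs the $n\to\infty$ convergence of $\rho^n$ to $\rho$ from \cite{lacker2023localweakconvergence, lacker2023marginal}, so the identification of $\rho$ with the lattice marginal has to be invoked either way). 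For the final step, the paper quotes the Gaussian total-variation bound of \cite{devroye2018total}, $d_{\text{TV}}(\stat,\rho_t)\le \|\Sigma_\stat^{-1}\Sigma_{\rho_t}-I\|_F$, which gives the linear-in-$\|\Sigma_{\rho_t}-\Sigma_\stat\|$ bound for all $t$ at once; your Pinsker-plus-Taylor argument is equally valid but requires the extra (harmless) step of restricting to $t\ge T_0$ so that $\Sigma_{\rho_t}$ is near $\Sigma_\stat$ and in particular non-degenerate, and absorbing the compact time interval into the constant. Both routes yield \eqref{e:lfeConv} with the same exponent.
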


We prove Theorem \ref{t:r:lfeConv} in Section \ref{ss:ConvLFE} by using the Gaussian structure to obtain an explicit characterization of the solutions to the 2-GLFE.

Our next result shows that the 2-GMLFE also converges exponentially fast to $\stat$.

\begin{theorem}[Exponential convergence of the 2-GMLFE]
\label{t:r:mlfeConv}
    Let $\alpha, \beta \in \R$ with $\alpha > |\beta|$. Suppose $\lambda$ is a non-degenerate centered Gaussian measure in $\mPS_2$, and let $\mlfe$ be the unique solution to the 2-GMLFE on $[0, \infty)$ with coefficients $(\alpha, \beta)$ and initial condition $\lambda$. Then there exist constants $C, c \in (0, \infty)$ that depend only on $(\alpha, \beta, \lambda)$ such that
    \begin{equation}
        d_{\text{TV}}(\mlfe_t,\stat) \leq Ce^{-ct}, \quad t \geq 0.
    \label{e:convMLFE}
    \end{equation}
\end{theorem}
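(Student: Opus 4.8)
The plan is to exploit the Gaussian structure to reduce the claim to an exponential convergence statement for the flow of covariance matrices, and then to analyze that flow by linearization at its equilibrium.

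\textbf{Step 1: reduction to a covariance estimate.} By Theorem \ref{t:r:wp} the solution $\mlfe$ is the law of a centered Gaussian process, so $\mlfe_t = \mN(0, \Sigma_{\mlfe_t})$ with $\Sigma_{\mlfe_t} \in \mSpace_2^+$, and by Remark \ref{r:piNonDegen} the target $\stat = \mN(0,\Sigma_\stat)$ is non-degenerate. For two centered non-degenerate Gaussians the relative entropy is the explicit smooth function
\[
2\,\mH\big(\mN(0,\Sigma_1)\,\big|\,\mN(0,\Sigma_2)\big) = \operatorname{tr}(\Sigma_2^{-1}\Sigma_1) - 3 + \log\frac{\det\Sigma_2}{\det\Sigma_1},
\]
which vanishes together with its gradient at $\Sigma_1 = \Sigma_2$. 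Hence, as long as the trajectory $\{\Sigma_{\mlfe_t}\}_{t\ge 0}$ together with $\Sigma_\stat$ stays in a fixed compact subset $K \subset \mSpace_2^+$ (which follows from the continuity of $t \mapsto \Sigma_{\mlfe_t}$ and the linear-growth bounds of Theorem \ref{t:r:wp}), Pinsker's inequality yields $d_{\text{TV}}(\mlfe_t, \stat) \le C_K \|\Sigma_{\mlfe_t} - \Sigma_\stat\|$ on $K$. It therefore suffices to prove $\|\Sigma_{\mlfe_t} - \Sigma_\stat\| \le C e^{-ct}$.

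\textbf{Step 2: the covariance flow and its equilibrium.} Because the drift in \eqref{e:mlfe} is affine and the law is Gaussian, the conditional expectation \eqref{e:gamma} is the linear regression of $X_{-1}(t)$ on $(X_0(t), X_1(t))$; thus $\X$ is a time-inhomogeneous linear SDE whose drift matrix $A(\Sigma_{\mlfe_t})$ is an explicit rational function of the entries of $\Sigma_{\mlfe_t}$, and the covariance solves the matrix Riccati equation $\dot\Sigma_t = -A(\Sigma_t)\Sigma_t - \Sigma_t A(\Sigma_t)^* + 2\mathsf{I}_3$ identified in the proof of Theorem \ref{t:r:wp}, with the flow preserving $\mSpace_2$. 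Writing $\delta = \sqrt{\alpha^2-\beta^2}$ and $\phi = -\beta/(\alpha+\delta)$ as in Remark \ref{r:piNonDegen}, the Markov-random-field identity $\sigma_0^\stat\sigma_2^\stat = (\sigma_1^\stat)^2$ forces the coefficient of $X_1$ in the regression of $X_{-1}$ on $(X_0,X_1)$ to vanish at $\Sigma_\stat$, so $\Sigma_\stat$ is an equilibrium of the Riccati flow and the frozen drift $A_\stat := A(\Sigma_\stat)$ is Hurwitz: passing to the leaf-symmetric/antisymmetric coordinates $(X_0,\,X_1+X_{-1},\,X_1-X_{-1})$ block-diagonalizes $A_\stat$, the antisymmetric block being the scalar $\alpha + \tfrac{\beta}{2}\phi > 0$ and the $2\times 2$ block having positive trace and determinant $\tfrac{\delta(\alpha+\delta)}{2}>0$ precisely because $\alpha > |\beta|$.

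\textbf{Step 3: from convergence to an exponential rate.} For the qualitative statement $\Sigma_{\mlfe_t} \to \Sigma_\stat$ I would invoke the long-time results of \cite{hu2024htheorem}, which apply since Theorem \ref{t:r:wp} supplies the well-posedness missing for the unbounded potential \eqref{e:potentials2}, and which for $\kap = 2$ identify $\Sigma_\stat$ as the unique fixed point of the associated cavity equation. To obtain the rate I would linearize $F(\Sigma) := -A(\Sigma)\Sigma - \Sigma A(\Sigma)^* + 2\mathsf{I}_3$ at $\Sigma_\stat$; the differential splits as the stable Lyapunov operator $E \mapsto -A_\stat E - E A_\stat^*$ plus the feedback correction $E \mapsto -(D_E A)\Sigma_\stat - \Sigma_\stat (D_E A)^*$ arising from the $\Sigma$-dependence of the regression coefficients. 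The main obstacle is exactly to verify that the full $3\times 3$ Jacobian $DF_{\Sigma_\stat}$ on $\mSpace_2$ remains Hurwitz once this feedback term is included; this reduces to an explicit eigenvalue computation in which $\alpha > |\beta|$ is essential, and where the stabilizing Lyapunov part must be shown to dominate the feedback. Granting this, $\Sigma_\stat$ is locally exponentially stable, so the globally convergent trajectory enters the basin of attraction in finite time and contracts exponentially thereafter; combined with the compactness and continuity bounds of Step 1 this gives $\|\Sigma_{\mlfe_t} - \Sigma_\stat\| \le C e^{-ct}$ for all $t \ge 0$, and hence \eqref{e:convMLFE}.
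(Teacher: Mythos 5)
Your route is genuinely different from the paper's, and much of its scaffolding is sound: the reduction of $d_{\text{TV}}(\mlfe_t,\stat)$ to $\|\Sigma_{\mlfe_t}-\Sigma_\stat\|$ on a compact subset of $\mSpace_2^+$ is legitimate (though the uniform-in-$t$ compactness does not follow from continuity and linear growth alone — you need the qualitative convergence $\Sigma_{\mlfe_t}\to\Sigma_\stat$ from Step 3 first, exactly as the paper does in Proposition \ref{p:qual_entropic_conv} to get $\inf_t\det\Sigma_{\mlfe_t}>0$), and your identification of $\Sigma_\stat$ as an equilibrium of the Riccati flow \eqref{e:riccati}, together with the block-diagonalization of the frozen drift $\sfL(\Sigma_\stat)$ and the determinant $\tfrac{\delta(\alpha+\delta)}{2}$ of the symmetric block, checks out. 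The paper instead never linearizes: it interpolates via the triangle inequality through the law $\tilde{\stat}_{t/2}$ of the Langevin diffusion for $\stat$ started from $\mlfe_{t/2}$, controls the first leg by the log-Sobolev inequality for the Gaussian $\stat$, and controls the second leg by the entropy identity of Lemma \ref{l:entropyEstimate} combined with the weighted Pinsker inequality (Lemmas \ref{l:pinsker} and \ref{l:Lambda}), which reduces everything to the exponential decay \eqref{e:sfeDecay} of the sparse free energy imported from \cite{hu2024htheorem}. That route buys the rate without any spectral computation; yours would buy a self-contained, purely finite-dimensional ODE argument — if it were completed.

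It is not completed, and the missing piece is precisely the crux of the theorem. In Step 3 you write that "the main obstacle is exactly to verify that the full $3\times 3$ Jacobian $DF_{\Sigma_\stat}$ on $\mSpace_2$ remains Hurwitz once the feedback term is included" and then proceed "granting this." Hurwitz stability of the frozen Lyapunov operator $E\mapsto -\sfL(\Sigma_\stat)E - E\,\sfL(\Sigma_\stat)^*$ does not imply stability of the full linearization, because the coefficients $\ta(\cdot)$ and $\tb(\cdot)$ in \eqref{e:sF} are themselves functions of the covariance, and the correction $E\mapsto -(D_E\sfL)\Sigma_\stat - \Sigma_\stat(D_E\sfL)^*$ has no definite sign a priori; it could in principle produce an unstable eigenvalue of $D\sfF_{\Sigma_\stat}$ restricted to $\mSpace_2\cong\R^3$. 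Nothing in the paper (nor in your proposal) rules this out without actually differentiating the expressions $\ff_1,\ff_2,\ff_3$ from \eqref{e:ffi} in $(a,b,c)$ at $\Mat_2^{-1}(\Sigma_\stat)$ and computing the three eigenvalues. Until that computation is carried out and shown to yield eigenvalues with negative real part for all $\alpha>|\beta|$, the quantitative conclusion \eqref{e:convMLFE} has not been proved; what remains is only the qualitative convergence already contained in Proposition \ref{p:qual_entropic_conv} and Theorem \ref{t:ogHthm}. If you want to salvage the theorem without that eigenvalue computation, you should follow the paper and extract the rate from the exponential decay of the sparse free energy \eqref{e:sfeDecay} via the Langevin-interpolation argument.
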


    Theorem \ref{t:r:mlfeConv} is proved in Section \ref{ss:LongMLFE} by utilizing the results about long-time behavior {of the} MLFE in \cite{hu2024htheorem}. In particular, we use properties of the \emph{sparse free energy} (see (4.2) in \cite{hu2024htheorem} or \eqref{e:sfe} below) and a corresponding \emph{H-theorem} (see Theorem 4.1 in \cite{hu2024htheorem} or Theorem \ref{t:ogHthm} below) to characterize the long-time behavior of the 2-GMLFE. 

The following corollary establishes Conjecture \ref{c:LFE_MLFE} for $\kap = 2$ and $(U, W)$ as in \eqref{e:potentials}. It is a direct consequence of Theorem \ref{t:r:lfeConv}, Theorem \ref{t:r:mlfeConv}, and the triangle inequality.
\begin{corollary}[2-GLFE and 2-GMLFE are exponentially close]
\label{c:r:lfeMlfe}
    Let $\alpha, \beta, \sigma \in \R$ with $\alpha > |\beta|$ and $\sigma > 0$. Suppose $\lambda = \mN(0, \sigma \mathsf{I}_3)$ and let $\lfe$ and $\mlfe$ be solutions to the 2-GLFE and 2-GMLFE, respectively, on $[0, \infty)$ with coefficients $(\alpha, \beta)$ and initial condition $\lambda$. Then there exist constants $C, c \in (0, \infty)$ depending only on $(\alpha, \beta, \sigma)$ such that
    \begin{equation}
        d_{\text{TV}}(\lfe_t, \mlfe_t) \leq C e^{-ct}.
    \end{equation}
\end{corollary}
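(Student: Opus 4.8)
The plan is to deduce the corollary directly from the two preceding convergence theorems together with the triangle inequality for total variation distance; the only genuine content is checking that the single initial law $\lambda = \mN(0, \sigma \mathsf{I}_3)$ is simultaneously admissible for both the 2-GLFE and the 2-GMLFE. For the GLFE, Theorem \ref{t:r:lfeConv} is stated precisely for initial conditions of the form $\mN(0, \sigma \mathsf{I}_3)$, so no verification is needed. For the GMLFE, Theorem \ref{t:r:mlfeConv} requires $\lambda$ to be a non-degenerate centered Gaussian measure lying in $\mPS_2$. Since $\sigma \mathsf{I}_3 = \Mat_2(\sigma, 0, 0) \in \mSpace_2$ and is strictly positive definite whenever $\sigma > 0$, Lemma \ref{l:gSym} gives $\lambda \in \mPS_2$, while $\lambda$ is manifestly centered and non-degenerate. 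Hence both theorems apply to the same initial condition.

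With this in hand, I would apply both results. Let $\stat = \mN(0, \Sigma_\stat)$ be the common limiting Gaussian from Definition \ref{d:r:stat}. Theorem \ref{t:r:lfeConv} produces constants $C_1, c_1 \in (0,\infty)$, depending only on $(\alpha, \beta, \sigma)$, with $d_{\text{TV}}(\lfe_t, \stat) \le C_1 e^{-c_1 t}$, and Theorem \ref{t:r:mlfeConv} produces constants $C_2, c_2 \in (0,\infty)$, again depending only on $(\alpha, \beta, \sigma)$, with $d_{\text{TV}}(\mlfe_t, \stat) \le C_2 e^{-c_2 t}$. The triangle inequality for $d_{\text{TV}}$ then yields
\begin{equation*}
    d_{\text{TV}}(\lfe_t, \mlfe_t) \le d_{\text{TV}}(\lfe_t, \stat) + d_{\text{TV}}(\stat, \mlfe_t) \le C_1 e^{-c_1 t} + C_2 e^{-c_2 t} \le C e^{-ct},
\end{equation*}
where $C := C_1 + C_2$ and $c := \min(c_1, c_2)$, both depending only on $(\alpha, \beta, \sigma)$. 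This is exactly the desired bound.

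There is essentially no obstacle here: all of the analytic difficulty is already absorbed into Theorems \ref{t:r:lfeConv} and \ref{t:r:mlfeConv}, whose proofs exploit the Gaussian structure to obtain exponential convergence to the shared stationary measure $\stat$. The only point worth flagging explicitly is the compatibility of the initial condition across the two frameworks, which reduces to the membership $\mN(0, \sigma \mathsf{I}_3) \in \mPS_2$ via Lemma \ref{l:gSym}; once this is noted, the conclusion is immediate from the triangle inequality.
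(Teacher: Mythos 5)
Your proof is correct and follows exactly the paper's route: the paper also derives this corollary as a direct consequence of Theorem \ref{t:r:lfeConv}, Theorem \ref{t:r:mlfeConv}, and the triangle inequality for total variation. Your additional check that $\mN(0,\sigma\mathsf{I}_3)\in\mPS_2$ via Lemma \ref{l:gSym} is a reasonable extra detail that the paper leaves implicit.
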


\section{Long-Time Behavior of the 2-GLFE}

\subsection{Explicit solutions to the 2-GLFE}
\label{ss:SolLFE} The Gaussian structure in our setting facilitates the exact computation of the time-marginals of solutions to the $2$-GLFE. This is the content of the following proposition.

\begin{proposition}[2-GLFE solutions]
\label{p:lfeSol}
Let $\alpha, \beta \in \R$ and $\lambda = \mathcal{N}(0, \sigma \mathsf{I}_3)$. Let $\lfe$ be the solution to the 2-GLFE on $[0, \infty)$ with coefficients $\alpha, \beta$ and initial condition $\lambda$.  For all $t \in [0, \infty)$ we have $\lfe_t \sim \mathcal{N}(0, \Sigma_{\lfe_t})$ with\begin{equation}
    \Sigma_{\lfe_t} := \Mat_2\big(\sigma_0^{\lfe_t}, \sigma_1^{\lfe_t}, \sigma_2^{\lfe_t}\big),
\end{equation}
where $\Mat_2:\R^3 \rightarrow \R^{3 \times 3}$ is the map introduced in Definition \ref{d:mSpace}, and for $r = 0,1,2$, the entries $\sigma_r^{\rho_t}$ depend only on $\alpha, \beta, \sigma$ and are given by
\begin{equation}
    \sigma_r^{\lfe_t} := \big(-\text{\normalfont sign}\,\beta\big)^r\bigg[\sigma^2 e^{-2t\alpha} I_r(2t|\beta|) + 2 \int_0^t e^{-2s\alpha} I_r(2s|\beta|) ds\bigg],
    \label{e:sigma_r}
\end{equation}
with $I_r$ denoting the modified Bessel function of the first kind of order $r$, which is given by
\begin{equation}
\label{e:I_r}
    I_r(y) := \int_0^1 \cos(r \stat x) e^{y \cos(\stat x)}  dx.
\end{equation}
\end{proposition}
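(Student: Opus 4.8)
The plan is to use the fact, recorded in the well-posedness result proved above (via Theorem 3.12 of \cite{lacker2023marginal}), that $\lfe$ is the $\{-1,0,1\}$-marginal of the infinite-dimensional system \eqref{e:zInfinite} with potentials \eqref{e:potentials}, and that this system is a centered Gaussian process. Under \eqref{e:potentials} we have $U'(x)=(\alpha+\beta)x$ and $W'(x)=-\tfrac{\beta}{2}x$, so the drift of $Z_v$ collapses to $-\big(\alpha Z_v+\tfrac{\beta}{2}(Z_{v+1}+Z_{v-1})\big)$ as in \eqref{e:zOU}; writing $\bfZ=(Z_v)_{v\in\bbZ}$, this is the linear SDE $d\bfZ=-A\bfZ\,dt+\sqrt2\,d\bfB$ with $A=\alpha\,\mathsf I+\tfrac{\beta}{2}(S_++S_-)$, where $S_\pm$ are the shifts on $\ell^2(\bbZ)$. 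Since $S_-=S_+^{*}$, the operator $A$ is bounded and self-adjoint. As the solution is Gaussian and lies in $\mPS_2^\infty$, each $\lfe_t$ is a centered Gaussian measure in $\mPS_2$, so Lemma \ref{l:gSym} forces $\Sigma_{\lfe_t}=\Mat_2(\sigma_0^{\lfe_t},\sigma_1^{\lfe_t},\sigma_2^{\lfe_t})$. It therefore suffices to compute, for $r=0,1,2$, the covariance $c_r(t):=\mathrm{Cov}\big(Z_0(t),Z_r(t)\big)$, which gives $\sigma_r^{\lfe_t}=c_r(t)$.

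First I would write down the evolution of the full covariance operator $\Sigma(t)$ of $\bfZ$. As $\bfZ$ solves a linear SDE with constant diffusion $\sqrt2\,\mathsf I$, the operator $\Sigma(t)$ satisfies the Lyapunov equation $\dot\Sigma=-A\Sigma-\Sigma A+2\mathsf I$, with $\Sigma(0)=\sigma^2\mathsf I$ the isotropic initial covariance. The key structural observation is that $A$, $\Sigma(0)$, and $\mathsf I$ are all convolution operators on $\ell^2(\bbZ)$, and these form a commutative algebra simultaneously diagonalized by the Fourier transform; hence $\Sigma(t)$ remains a symmetric convolution operator for all $t$. Writing $c_r(t)$ for its entry at offset $r$, with $c_{-r}=c_r$, the Lyapunov equation collapses to the spatially convolutional system
\[
\dot c_r(t)=-2\alpha\,c_r(t)-\beta\big(c_{r-1}(t)+c_{r+1}(t)\big)+2\delta_{r,0},\qquad c_r(0)=\sigma^2\delta_{r,0}.
\]

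Next I would diagonalize this system through the spatial Fourier transform $\hat c(\theta,t):=\sum_{r\in\bbZ}c_r(t)e^{ir\theta}$, which turns the nearest-neighbor coupling into multiplication by $2\cos\theta$ and yields, for each fixed $\theta$, the scalar linear ODE $\partial_t\hat c=-2(\alpha+\beta\cos\theta)\hat c+2$ with $\hat c(\theta,0)=\sigma^2$. Solving by variation of constants and inverting the Fourier transform, using that $\hat c(\cdot,t)$ is real and even, gives
\[
c_r(t)=\frac{1}{2\pi}\int_{-\pi}^{\pi}\Big(\sigma^2 e^{-2t(\alpha+\beta\cos\theta)}+2\int_0^t e^{-2s(\alpha+\beta\cos\theta)}\,ds\Big)\cos(r\theta)\,d\theta.
\]
The last step is to recognize the inner $\theta$-integral: factoring out $e^{-2s\alpha}$ and substituting $\theta=\pi x$ identifies $\tfrac1\pi\int_0^\pi e^{-2s\beta\cos\theta}\cos(r\theta)\,d\theta$ with $I_r(-2s\beta)$ for the $I_r$ of \eqref{e:I_r}, and the parity identity $I_r(-y)=(-1)^r I_r(y)$ rewrites this as $(-\operatorname{sign}\beta)^r I_r(2s|\beta|)$. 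Substituting and reading off $r=0,1,2$ yields exactly \eqref{e:sigma_r}.

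The main obstacle is not the reduction, which is essentially forced by the Gaussian and symmetry structure, but justifying the infinite-dimensional steps: that $\Sigma(t)$ is a genuine bounded convolution operator solving the Lyapunov equation, that the Fourier series $\hat c(\cdot,t)$ converges so the inversion above is valid, and in particular the identification of the resulting oscillatory integral with the Bessel function $I_r$. Since $A$ is bounded and self-adjoint with $\|A\|\le|\alpha|+|\beta|$, the semigroup $e^{-At}$ is a bounded convolution operator whose symbol is smooth in $\theta$, which controls all these points. Alternatively, one may run the identical but entirely finite computation on the cycle $\Cn$ --- where $A$ is circulant, diagonalized by the discrete Fourier modes, and $c_r^n(t)$ is a finite sum --- and then let $n\to\infty$, combining the finite-time convergence $Z^n\to Z$ (Theorem 3.3 of \cite{lacker2023localweakconvergence}) with the convergence of the Riemann sums to the Bessel integral. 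This second route avoids any operator-theoretic subtlety and fits the commuting-limits theme of Figure \ref{f:commutative}.
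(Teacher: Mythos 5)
Your proposal is correct, and your primary route is genuinely different from the paper's. The paper never works with the infinite system directly: it computes the covariance $\bSigma_n(t)$ of the $n$-cycle Ornstein--Uhlenbeck process \eqref{e:nOU} explicitly via the spectral decomposition of the circulant adjacency matrix (Lemma \ref{l:nCov}), invokes the local weak convergence $\lfe^n \to \lfe$ from Theorem 3.3(ii) of \cite{lacker2023localweakconvergence} and Theorem 3.12 of \cite{lacker2023marginal}, and identifies the limiting Riemann sum with $\int_0^1 \varphi(t,\cos(2\pi x))\cos(2r\pi x)\,dx$ before making the Bessel identification --- i.e., exactly your ``alternative'' finite-$n$ route. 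Your main argument instead diagonalizes the Lyapunov equation for the covariance of the infinite system via the Fourier transform on $\bbZ$, arriving at the same integral $\frac{1}{2\pi}\int_{-\pi}^{\pi}\varphi(t,2\cos\theta)\cos(r\theta)\,d\theta$ without ever passing through $n<\infty$; this is cleaner and self-contained, at the cost of the operator-theoretic justifications you correctly flag (note also that with i.i.d.\ Gaussian initial data the process does not live in $\ell^2(\bbZ)$, so the SDE and the convolution calculus should be read coordinatewise in $\R^{\bbZ}$, which is harmless because the entries of $e^{-tA}$ are $e^{-t\alpha}I_{|v-u|}(-t\beta)$ and decay super-exponentially in $|v-u|$). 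What the paper's detour buys is reuse: the explicit formula \eqref{e:bSig_n} for $\bSigma_n(t)$ is needed again in Remark \ref{r:pi_n_to_pi} to prove $\pi_n \to \pi$, i.e., the downward arrow in Figure \ref{f:commutative}, so the finite computation is not redundant there. Your Fourier reduction, the solution of the scalar ODE, the identification of $\tfrac1\pi\int_0^\pi e^{-2s\beta\cos\theta}\cos(r\theta)\,d\theta$ with $I_r(-2s\beta)$, and the parity identity $I_r(-y)=(-1)^rI_r(y)$ all match the paper's final computation exactly.
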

The proof of Proposition \ref{p:lfeSol} exploits results from \cite{lacker2023localweakconvergence} and \cite{lacker2023marginal} that show that the Gaussian 2-LFE is the limit of a sequence of Gaussian processes. More precisely, let $n \in \bbN$ be an even number and $\Cn = \{0, \ldots, n-1\}$ denote the cycle graph on $n$ elements with adjacency matrix $\sfA_n \in \R^{n \times n}$. For convenience, we make the identification $-1 := n-1$ in $\Cn$. Consider the continuous $n$-dimensional stochastic process $\bfZ^{n} = \{Z^{n}_v\}_{v \in \Cn}$ that satisfies the following SDE:
\begin{equation}
    d\bfZ^{n}(t) = - \Big(\alpha \sfI_n + \frac{\beta}{2} \sfA_n\Big) \bfZ^n(t) dt + \sqrt{2} d\bfB^n(t), \quad \bfZ^n(0) \sim \mathcal{N}(0, \sigma \sfI_n),
\label{e:nOU}
\end{equation}
where $\bfB^n = \{B^n_v\}_{v \in \Cn}$ is a collection of i.i.d. standard Brownian motions independent of the initial condition. The SDE \eqref{e:nOU} has a unique strong solution that is an Ornstein-Uhlenbeck process (e.g., see Section 5.6 and Problem 5.6.2 of \cite{karatzas1991book}). In other words $\bfZ^n$ is a centered Gaussian process with the explicit representation:
\begin{equation}
    \bfZ^n(t) = e^{-t\big(\alpha \sfI_n + \frac{\beta}{2} \sfA_n\big)} \bfZ^n(0) + \int_0^t e^{-(t-s)\big(\alpha \sfI_n + \frac{\beta}{2} \sfA_n\big)}\sqrt{2} d\bfB^n(s), \quad t \geq 0.
\end{equation}
Let $\bSigma_n(t) := \bbE[\bfZ^n(t) \otimes \bfZ^n(t)]$ denote the covariance matrix of $\bfZ^n(t)$. Then due to the independence of the $\bfZ^n(0)$ from $\bfB$, by the It\^{o} isometry we have
\begin{equation}
    \bSigma_n(t) = \sigma^2 e^{-2t\big(\alpha \sfI_n + \frac{\beta}{2} \sfA_n\big)} + 2 \int_0^t e^{-2s\big(\alpha \sfI_n + \frac{\beta}{2} \sfA_n\big)} ds.
    \label{e:bSgima}
\end{equation}

\begin{lemma}[$n$-particle covariance]\label{l:nCov} Let $n \in \bbN$ be even, and fix $\alpha, \beta \in \R$ and $\sigma \in (0, \infty)$. Let $\bfZ^n$ be the solution to \eqref{e:nOU} and let $\bSigma_n$ be its covariance matrix given by \eqref{e:bSgima}. Then for all $j, k \in \Cn$ we have
\begin{equation}
    \big(\bSigma_n(t)\big)_{j, k} = \frac{1}{n}\sum_{l = 0}^{n-1} \varphi(t, \lambda_l^n) \cos \bigg( \frac{2(j - k)\pi l}{n}\bigg),
    \label{e:bSig_n}
\end{equation}
where $\lambda_l^n := 2 \cos(2\pi l/n)$ for $l = 0, \ldots, n-1$ and
\begin{equation}
    \varphi(t, x) := \sigma^2 e^{-2\alpha t- \beta t x} + \frac{1 - e^{-2\alpha t - \beta tx}}{\alpha + \frac{\beta}{2}x}, \quad (t, x) \in [0, \infty) \times \R.
    \label{e:varphi}
\end{equation}
\end{lemma}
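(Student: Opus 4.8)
The plan is to exploit the circulant structure of the cycle adjacency matrix $\sfA_n$, which lets us simultaneously diagonalize every matrix appearing in \eqref{e:bSgima} by the discrete Fourier transform and thereby reduce the matrix identity to a scalar computation on each Fourier mode.

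First I would record that $M := \alpha \sfI_n + \tfrac{\beta}{2}\sfA_n$ is a real symmetric circulant matrix, since $(\sfA_n)_{j,k} = 1$ exactly when $j - k \equiv \pm 1 \pmod n$. Its eigenvectors are the Fourier modes $\bfv_l := \tfrac{1}{\sqrt n}\big(e^{2\pi i j l/n}\big)_{j \in \Cn}$ for $l = 0, \ldots, n-1$, which one checks form a unitary basis ($\bfv_l^*\bfv_m = \delta_{l,m}$). A direct computation gives $(\sfA_n \bfv_l)_j = (\bfv_l)_{j-1} + (\bfv_l)_{j+1} = (e^{2\pi i l/n} + e^{-2\pi i l/n})(\bfv_l)_j$, so the eigenvalue of $\sfA_n$ is $2\cos(2\pi l/n) = \lambda_l^n$ and $\bfv_l$ is an eigenvector of $M$ with eigenvalue $\mu_l := \alpha + \tfrac{\beta}{2}\lambda_l^n$. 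Hence $M = \bfV \bfD \bfV^*$ with $\bfV := [\bfv_0, \ldots, \bfv_{n-1}]$ unitary and $\bfD := \mathrm{diag}(\mu_0, \ldots, \mu_{n-1})$.

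Next I would apply the functional calculus to \eqref{e:bSgima}. Since $\bfV$ is independent of $t$ and $s$, both the matrix exponential and the matrix integral diagonalize in this same basis: $e^{-2tM} = \bfV\,\mathrm{diag}(e^{-2t\mu_l})\,\bfV^*$ and $2\int_0^t e^{-2sM}\,ds = \bfV\,\mathrm{diag}\big(2\int_0^t e^{-2s\mu_l}\,ds\big)\,\bfV^*$ by linearity of the conjugation by the fixed matrix $\bfV$. Evaluating the scalar integral as $2\int_0^t e^{-2s\mu_l}\,ds = (1 - e^{-2t\mu_l})/\mu_l$ and recalling that $-2t\mu_l = -2\alpha t - \beta t\lambda_l^n$, the $l$-th diagonal entry of $\bSigma_n(t)$ becomes exactly $\sigma^2 e^{-2t\mu_l} + (1 - e^{-2t\mu_l})/\mu_l = \varphi(t, \lambda_l^n)$ as defined in \eqref{e:varphi}. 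Thus $\bSigma_n(t) = \sum_{l=0}^{n-1}\varphi(t, \lambda_l^n)\,\bfv_l \bfv_l^*$, and reading off the $(j,k)$ entry via $(\bfv_l)_j\,\overline{(\bfv_l)_k} = \tfrac1n e^{2\pi i(j-k)l/n}$ gives $(\bSigma_n(t))_{j,k} = \tfrac1n\sum_{l=0}^{n-1} \varphi(t,\lambda_l^n)e^{2\pi i(j-k)l/n}$. Finally, since $\bSigma_n(t)$ is the covariance of a real-valued random vector, its entries are real, so $(\bSigma_n(t))_{j,k}$ equals the real part of this complex sum; as each $\varphi(t,\lambda_l^n)$ is real, taking real parts termwise replaces $e^{2\pi i(j-k)l/n}$ by $\cos(2\pi(j-k)l/n)$, yielding \eqref{e:bSig_n}.

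The argument is essentially routine once the Fourier diagonalization is in place, so I do not expect a serious obstacle. The only point requiring care is the degenerate case $\mu_l = 0$ (possible when $\alpha + \tfrac{\beta}{2}\lambda_l^n = 0$ for some $l$), where the scalar integral equals $2t$ rather than $(1 - e^{-2t\mu_l})/\mu_l$; here one checks that the second summand of $\varphi$ has a removable singularity, with $\lim_{\mu\to 0}(1-e^{-2t\mu})/\mu = 2t$, so \eqref{e:varphi} is to be understood through this continuous extension and the identity persists. A secondary bookkeeping point is simply to confirm the normalization $\tfrac{1}{\sqrt n}$ so that $\bfV$ is genuinely unitary, which is what guarantees the clean spectral sum $\bSigma_n(t) = \sum_l \varphi(t,\lambda_l^n)\bfv_l\bfv_l^*$ above; I note that the parity of $n$ plays no role in this computation.
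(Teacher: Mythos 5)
Your proof is correct and follows essentially the same route as the paper: both diagonalize the circulant matrix $\alpha\sfI_n + \tfrac{\beta}{2}\sfA_n$ by the discrete Fourier modes and read off the $(j,k)$ entry of \eqref{e:bSgima} mode by mode. The only difference is presentational: you stay in the complex eigenbasis and take real parts at the end (which, as you note, renders the evenness of $n$ irrelevant), whereas the paper first passes to a real orthonormal basis by pairing conjugate modes and then applies the cosine addition formula; your explicit treatment of the removable singularity at $\alpha + \tfrac{\beta}{2}\lambda_l^n = 0$ is a point the paper leaves implicit.
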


\begin{proof}
    As shown below, the lemma follows by direct calculation. Let $\omega_n := \exp(2 i \pi/n)$ denote the $n$-th root of unity. Given that the adjacency matrix $\sfA_n$ of the $n$-cycle $\Cn$ is a circulant matrix with zero on the diagonal, classical results about adjacency matrices of cycle graphs (e.g., see Section 1.4.3 of \cite{brouwer2012spectra}) show that $\sfA_n$ admits the spectral decomposition $\sfA_n = \sfS_n \sfD_n \sfS_n^*$ with 
    \begin{equation}
        (\sfS_n)_{j,k} = \frac{1}{\sqrt{n}}\omega_n^{jk}, \quad j, k \in \Cn.
        \label{e:sEntries}
    \end{equation} 
    For notational convenience, let $\bfs_k := \{ (\sfS_n)_{jk} \}_{j \in \Cn} \in \R^n$ denote the columns of $\sfS_n$. The matrix $\sfD_n$ is a diagonal matrix of eigenvalues given by $(\sfD_n)_{j,k} = \lambda_j^n \delta_{j,k}$  for $j, k \in \Cn$, where $\delta_{j, k}$ denotes the Kronecker delta and we have
    \begin{equation} 
        \lambda_j^n := 2 \cos \bigg( \frac{2\pi j}{n}\bigg), \quad j \in \Cn.
        \label{e:eigenvalues}
    \end{equation}
    Recall that $n$ is even. Notice that the extremal eigenvalues are $\lambda_0^n = 2$, $\lambda^n_{n/2} = -2$, and for $j \not \in \{0, n/2\}$ we have $\lambda_j^n = \lambda_{n - j}^n \not \in \{-2, 2\}$. The eigenspaces corresponding to $\lambda_0^n$ and $\lambda_{n/2}^n$ are one-dimensional, while the eigenspaces corresponding to $\lambda_j^n$ for $j \not \in \{0, n/2\}$ are two-dimensional.
    
    We can transform $\sfS_n$ to a more useful, real-valued orthonormal basis $\sfU_n  \in \R^{n \times n}$ with columns $(\mathbf{u}_k)_{k \in \Cn}$ given by
    \begin{equation}
    \label{e:uk}
        \mathbf{u}_k := \left \{ \begin{aligned}
            &\bfs_k, &\quad & k = 0, \frac{n}{2},\\
            &\frac{1}{\sqrt{2}}(\bfs_k + \bfs_{n - k}), &\quad &k = 1, \ldots, \frac{n}{2}-1,\\
            &\frac{1}{i\sqrt{2}}(\bfs_{n-k} - \bfs_k), &\quad &k = \frac{n}{2}+1, \ldots, n -1.
        \end{aligned} \right.\quad k \in \Cn.
    \end{equation}
    In particular, the orthonormal basis $\sfU_n$ is obtained by rotating each 2-dimensional eigenspace of $\sfA_n$ by a fixed amount. Therefore $\sfU_n$ spans the same eigenspaces as $\sfS_n$ and we have $ \sfA_n = \sfU_n \sfD_n \sfU_n^*$. We collect here a useful representation of the entries of $\sfU_n$, which is a consequence of \eqref{e:sEntries}-\eqref{e:uk}. For $j, k \in \Cn$, we have
    \begin{equation}
        (\sfU_n)_{j, k} = \delta_{0, k}\frac{1}{\sqrt{n}} + \delta_{\frac{n}{2}, k}\frac{(-1)^j}{\sqrt{n}} + \sum_{l = 1}^{\frac{n}{2}-1} \delta_{l, k} \sqrt{\frac{2}{n}}\cos \bigg( \frac{2\pi j k}{n}\bigg) - \sum_{l = \frac{n}{2}+1}^{n-1} \delta_{l, k} \sqrt{\frac{2}{n}}\sin \bigg( \frac{2\pi j k}{n}\bigg).
        \label{e:Uentries}
    \end{equation}
    By \eqref{e:bSgima}, \eqref{e:varphi}, and the fact that $\sfA_n = \sfU_n \sfD_n \sfU_n^*$ we have 
    \begin{equation}
    \label{e:bSigman}
        \bSigma_n(t) = \sfU_n \bPhi_n(t) \sfU_n^*,
    \end{equation}
    where we define $\bPhi_n(t)$ to be the diagonal matrix with entries given by,
    \begin{equation*}
        \big(\bPhi_n(t)\big)_{j, j} := \sigma^2 e^{-2t(\alpha + \frac{\beta}{2}\lambda_j^n)} + 2 \int_0^t e^{-2s(\alpha + \frac{\beta}{2}\lambda_j^n)} ds = \varphi(t, \lambda_j^n),\quad j \in \Cn.
    \end{equation*}
    Combining the above display with \eqref{e:bSigman} and \eqref{e:Uentries}, for $j, k \in \Cn$ we obtain
    \begin{equation*}
    \begin{aligned}        (\bSigma_n(t))_{j, k}  
= &\sum_{l = 0}^{n-1} \varphi(t, \lambda_{l}^n)  (\sfU_n)_{j, k} (\sfU_n)_{k, l} \\
        = &\frac{1}{n}\varphi(t, \lambda_0^n) + \frac{(-1)^{j+k}}{n} \varphi(t, \lambda_{n/2}^n) + \frac{2}{n}\sum_{l = 1}^{\frac{n}{2}-1} \varphi(t, \lambda_{l}^n) \cos\bigg(\frac{2\pi j l}{n}\bigg) \cos\bigg(\frac{2\pi k l}{n}\bigg) \\&\hspace{20em} + \frac{2}{n}\sum_{l = \frac{n}{2}+1}^{n-1} \varphi(t, \lambda_l^n)  \sin\bigg(\frac{2\pi j l}{n}\bigg) \sin\bigg(\frac{2\pi k l}{n}\bigg) \\
        = &\frac{1}{n}\varphi(t, \lambda_0^n) + \frac{(-1)^{j+k}}{n} \varphi(t, \lambda_{n/2}^n) \\ & \hspace{8 em} + \frac{2}{n}\sum_{l = 1}^{\frac{n}{2}-1}\varphi(t, \lambda_{l}^n)\bigg[\cos\bigg(\frac{2\pi j l}{n}\bigg) \cos\bigg(\frac{2\pi k l}{n}\bigg) + \sin\bigg(\frac{2\pi j l}{n}\bigg) \sin\bigg(\frac{2\pi k l}{n}\bigg) \bigg],
    \end{aligned}
    \end{equation*}
    where in the last equality we used the change of indices $l \mapsto n - l$ in the second sum of the penultimate line and the fact that the sine function is odd. Combining the last display with the well-known identity $\cos(x)\cos(y) + \sin(x) \sin(y) = \cos(x - y)$ then allows us to conclude that for all $j, k \in \Cn$ we have
    \begin{equation*}
    \begin{aligned}
(\bSigma_n(t))_{j, k} &=   \frac{1}{n}\varphi(t, \lambda_0^n) + \frac{(-1)^{j+k}}{n} \varphi(t, \lambda_{n/2}^n) + 2\bigg[\frac{1}{n}\sum_{l = 1}^{\frac{n}{2}-1}\varphi(t, \lambda_{l}^n)\cos \bigg(\frac{2\pi l }{n}(j - k)\bigg)\bigg] \\
&= \frac{1}{n} \sum_{l = 0}^{n-1} \varphi(t, \lambda_l^n) \cos\bigg(\frac{2\pi l}{n}(j - k)\bigg),
    \end{aligned}
    \end{equation*}
    where in the last equality we use the fact that $\cos(\pi(j-k)) = (-1)^{j-k} = (-1)^{j+k}$ and the change of indices $n \mapsto n - l$ in one copy of the sum in the penultimate line, alongside the fact that the cosine function is even.
\end{proof} 

\begin{remark}[Convergence of stationary distributions]\label{r:pi_n_to_pi}\normalfont Taking $t \rightarrow \infty$ in \eqref{e:bSig_n} and using the ergodicity of the finite-dimensional OU process \eqref{e:zOU}, we see that $\pi_n = \mathcal{N}(0, \mathbf{\Sigma}^n(\infty))$ for $\mathbf{\Sigma}^n(\infty) \in \R^{V_n \times V_n}$, where for $j, k \in V_n$ we have
\begin{equation*}
    \big(\mathbf{\Sigma}^n(\infty)\big)_{j, k} = \frac{1}{n}\sum_{l = 0}^{n-1}  \frac{1}{\alpha + \beta \cos(2\pi l/n)} \cos \bigg( \frac{2(j - k) \pi l}{n}\bigg).
\end{equation*}
Therefore, taking $n \rightarrow \infty$ in the above display shows that the $\{-1, 0, 1\}$-marginal of $\pi_n$ converges to the probability measure  $\stat$ specified in Definition \ref{d:r:stat}. 
\end{remark}

\begin{proof}[Proof of Proposition \ref{p:lfeSol}]
Let $\lfe^n := \mL(\{Z^n_v\}_{v \in \{0, \pm 1\}}) \in \mP(\mC^3)$ denote the law of the marginal of $\bfZ^{n}$ on the neighborhood $\{0, \pm 1\}$. 
By Theorem 3.3(ii) of \cite{lacker2023localweakconvergence} and Theorem 3.12 of \cite{lacker2023marginal}, it follows that $\lfe^n \rightarrow\lfe$ in the weak topology as $n \rightarrow \infty$ (see the discussion around Theorem 3.19 of \cite{lacker2023marginal} for a related argument). Note that \eqref{e:varphi} implies that $\varphi$ is continuous (in fact infinitely differentiable) on $[0, \infty) \times \R$ when $\alpha > |\beta|$. Therefore by the convergence of $\rho^n$ to $\rho$ as $n \rightarrow \infty$ and Lemma \ref{l:nCov}, for $i, j \in \{0, \pm 1\}$ with $r := |i -j|$ we have
\begin{equation}
    (\Sigma_{\lfe_t})_{i, j} = \lim_{n \rightarrow \infty} \big(\Sigma_{\lfe^n_t}\big)_{i,j} = \lim_{n \rightarrow \infty}\frac{1}{n}\sum_{l = 0}^{n-1} \varphi(t, \lambda_l^n) \cos \bigg( \frac{2r\pi l}{n}\bigg) = \int_0^1 \varphi\big(t, \cos(2\pi x)\big) \cos(2r\pi x) dx,
    \label{e:covnlimit}
\end{equation}
where the convergence of the Riemann sum follows from the fact that $\varphi$ is continuous (see Theorem 6.8 of \cite{rudin1976analysis}). By \eqref{e:covnlimit} and the change of variables $x \mapsto 2 - \frac{1}{2}x$ and the fact that the cosine function is even, we have
\begin{equation*}
   (\Sigma_{\lfe_t})_{i, j} = \int_0^1 \varphi\big(t, \cos(\pi x)\big) \cos(r\pi x) dx,
\end{equation*}
Together with \eqref{e:varphi}, \eqref{e:I_r}, and Fubini's theorem, the last display implies that
\begin{equation*}
\begin{aligned}
    (\Sigma_{\lfe_t})_{i, j} &= \sigma^2 e^{-2t\alpha} \int_0^1 \cos(r \pi x) e^{-2t\beta \cos(\pi x)} dx + \int_0^1 \cos(r \pi x) \frac{1 - e^{-2t\alpha - 2t\beta \cos(\pi x)}}{\alpha + \beta \cos(\pi x)} dx \\  
        &= \sigma^2 e^{-2t\alpha} I_r(-2t\beta) + 2 \int_0^1\cos(r\pi x)\bigg[\int_0^t  e^{-2s\alpha - 2s \beta \cos(2\pi x)} ds \bigg] dx. \\
    &= \sigma^2 e^{-2t\alpha} I_r(-2t\beta) + 2 \int_0^t e^{-2s\alpha} I_r(-2s\beta) ds. \\
    &= \big(-\text{sign}\,\beta\big)^r\bigg[\sigma^2 e^{-2t\alpha} I_r(2t|\beta|) + 2 \int_0^t e^{-2s\alpha} I_r(2s|\beta|) ds\bigg].
\end{aligned}
\end{equation*}
We then conclude by \eqref{e:sigma_r} that $(\Sigma_{\lfe_t})_{i, j} = \sigma_r^{\rho_t}$.
\end{proof}

{\begin{remark}\normalfont We note that the restriction to independent initial conditions in Proposition \ref{p:lfeSol} is not strictly necessary. A simple modification of the proof of Lemma \ref{l:nCov} shows that for any non-degenerate Gaussian initial measure $\lambda \in \mPS_2$, the first term of \eqref{e:bSgima} decays exponentially fast as $t \rightarrow \infty$. However, in this setting a closed form for $\bSigma_n$ may not be available.
\end{remark}}

\subsection{Convergence to equilibrium of the $2$-GLFE solution} {We now prove Theorem \ref{t:r:lfeConv}. Recall the matrix notation introduced in Section \ref{ss:vec-matrices}.}
\label{ss:ConvLFE}
\begin{proof}[Proof of Theorem \ref{t:r:lfeConv}]
    Fix $t > 0$. By Proposition \ref{p:lfeSol}, $\rho_t$ is a centered Gaussian distribution. Similarly, by Remark \ref{r:piNonDegen}  $\stat$ is a centered, non-degenerate Gaussian distribution. By the second assertion of Theorem 1.1 of \cite{devroye2018total} and sub-multiplicativity of the Frobenius norm, there exists $C_0 \in (0, \infty)$ depending on $\stat$ such that
    \begin{equation}
        d_{\text{TV}}(\stat, \rho_t) \leq \big\| \big(\Sigma_\stat\big)^{-1} \Sigma_{\lfe_t} - I\big \|_{F} \leq C_0 \big\| \Sigma_{\lfe_t} - \Sigma_\stat\big\|_F.
        \label{e:sigDiff1}
    \end{equation}  
    Recall the definitions of $\sigma^{\lfe_t}$ and $\sigma^\stat$ from \eqref{e:sigma_r} and \eqref{e:sigma_pi}, respectively. By the equivalence of the Frobenius and $l^\infty$ matrix norms, there exists $C_1 \in (0, \infty)$ such that
    \begin{equation}    
    \big\| \Sigma_{\lfe_t} - \Sigma_\stat\big\|_F \leq C_1 \sup_{r \in \{0, 1, 2\}} |\sigma_r^{\lfe_t} - \sigma^\stat_r|.
    \label{e:sigDiff2}
    \end{equation}
    Since $I_r$ denotes the modified Bessel function of the first kind of order $r$, by (2.15.3.1) of \cite{prudnikov1986vol2} it follows that
    \begin{equation}
        2\int_0^\infty e^{-2\alpha s}I_r(2s|\beta|)ds = \frac{|\beta|^r}{\sqrt{\alpha^2 - \beta^2}(\alpha + \sqrt{\alpha^2 - \beta^2})^r}.
    \end{equation}
    The last display, when combined with \eqref{e:sigma_r}, \eqref{e:sigma_pi} and the crude estimate $|I_r(y)| \leq e^{|y|}$, yields
    \begin{equation}
    \begin{aligned}
        |\sigma_r^{\lfe_t} - \sigma_r^\stat| &= \bigg| \sigma^2 e^{-2\alpha t} I_r(2t|\beta|) + 2 \int_t^\infty e^{-2\alpha s} I_r(2s|\beta|) ds\bigg| \\
        & \leq \sigma^2 e^{-2 (\alpha - |\beta|)t} + 2 \int_t^\infty e^{-2(\alpha - |\beta|)s} ds \\
        & \leq \big(\sigma^2 + 4(\alpha - |\beta|)^{-1}\big) e^{-2(\alpha - |\beta|)t}.
    \end{aligned}
    \end{equation}
    Substituting the above estimate into \eqref{e:sigDiff2} and \eqref{e:sigDiff1}, we obtain \eqref{e:lfeConv}.
\end{proof}

\section{Well-Posedness of the $\kap$-GMLFE}

\subsection{Auxiliary Lemmas} We begin by introducing three auxiliary lemmas that will be used in the proof of well-posedness. 
{First, we recall a standard result for comparing the laws of diffusion processes, which is stated in terms of the relative entropy functional that is defined by 
    \begin{equation}
        \mH\big( \nu\big| \tilde{\nu} \big) := \left\{
        \begin{aligned}
            &\int_\mX \log\bigg( \frac{ d\nu}{d\tilde{\nu}}(x) \bigg) \nu(dx) 
            &\quad& \mbox{if}\quad
            \nu \ll \tilde{\nu}, 
            \\
            &\infty &\quad& \mbox{otherwise, }
        \end{aligned}
        \right.
        \label{e:entropy-def}
    \end{equation}
for two measures $\nu, \tilde{\nu}$ on a Polish space $\mX$.}

\begin{lemma}[Lemma 3.5 and Remark 3.6 of \cite{conforti2023projected}]
\label{l:entropyEstimate}
Let $d \in \mathbb{N}$ and $\lambda_0 \in \mathcal{P}(\R^d)$. Suppose $b^1, b^2:[0, T] \times \R^d \rightarrow \R^d$ are measurable and satisfy the linear growth condition, that is for $i = 1,2$,
\begin{equation}
    \sup_{(t, x) \in [0, T] \times \R^d} |b^i(t, x)| \leq C(1 + |x|)
\end{equation}
for some $C \in (0, \infty)$. For $i = 1, 2$, suppose $(\Omega^i, \mathcal{F}^i, \mathbb{F}^i, \mathbb{P}^i)$ is a filtered probability space supporting an $m$-dimensional Brownian motion $W^i$ and a continuous $m$-dimensional $\mathbb{F}^i$-adapted process $X^i$ satisfying
\begin{equation*}
    dX^i(t) = b^i(t, X^i(t)) dt + \sqrt{2}dW^i(t), \quad X^i(0) \sim \lambda_0.
\end{equation*}
Then the relative entropy between the laws of $X^1$ and $X^2$ satisfies
\begin{equation*}
    \mathcal{H} \big( \mathcal{L}(X^1[T])\,|\, \mathcal{L}(X^2[T]) \big) = \frac{1}{2} \mathbb{E}^{\mathbb{P}^1} \Bigg[ \int_0^T |b^1(t, X^1(t)) - b^2(t, X^1(t))|^2 dt\Bigg].
\end{equation*}
\end{lemma}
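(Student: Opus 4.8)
The plan is to derive the identity directly from Girsanov's theorem, using crucially that $X^1$ and $X^2$ have the same constant nondegenerate diffusion coefficient $\sqrt2$ and the same initial law $\lambda_0$, so that the path laws $\nu^i := \mL(X^i[T]) \in \mP(\mC^d_T)$ differ only through the drifts. A preliminary step is the a priori bound $\bbE^{\bbP^i}[\sup_{t \in [0,T]} |X^i(t)|^2] < \infty$, which follows from the linear growth of $b^i$ via the Burkholder--Davis--Gundy and Gronwall inequalities. This bound guarantees that $\bbE^{\bbP^1}\big[\int_0^T |b^1 - b^2|^2(t, X^1(t))\,dt\big] < \infty$, so that the right-hand side of the claimed identity is finite, and it is also what will force the relevant stochastic integrals to be genuine martingales.

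The main construction is the change of measure. Writing $h := \tfrac{1}{\sqrt2}(b^1 - b^2)$, so that $b^1 = b^2 + \sqrt2\, h$, I would introduce the Dol\'eans--Dade exponential
\begin{equation*}
    \mathcal{E}_t := \exp\Big( -\int_0^t h(s, X^1(s)) \cdot dW^1(s) - \tfrac12 \int_0^t |h(s, X^1(s))|^2\, ds\Big),
\end{equation*}
and argue that $(\mathcal{E}_t)_{t \in [0,T]}$ is a true $\bbP^1$-martingale. Setting $d\widetilde{\bbP} := \mathcal{E}_T\, d\bbP^1$, Girsanov's theorem then shows that $\widetilde W_t := W^1(t) + \int_0^t h(s, X^1(s))\, ds$ is a $\widetilde{\bbP}$-Brownian motion and that $X^1$ solves, under $\widetilde{\bbP}$, the It\^o equation with drift $b^2$ and initial law $\lambda_0$. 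Hence $\mL^{\widetilde{\bbP}}(X^1[T]) = \nu^2$, which identifies $d\nu^1/d\nu^2$ on $\mC^d_T$ with the path functional representing $\mathcal{E}_T^{-1}$ and in particular gives $\nu^1 \ll \nu^2$.

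It then remains to compute $\mH(\nu^1\,|\,\nu^2) = \bbE^{\bbP^1}[\log \mathcal{E}_T^{-1}] = \bbE^{\bbP^1}\big[\int_0^T h \cdot dW^1 + \tfrac12 \int_0^T |h|^2\, ds\big]$. The square-integrability from the first step makes $\int_0^{\cdot} h \cdot dW^1$ a true $\bbP^1$-martingale, so the first term has zero mean, leaving $\tfrac12 \bbE^{\bbP^1}\big[\int_0^T |h(t, X^1(t))|^2\, dt\big]$; rewriting $h$ in terms of $b^1 - b^2$ and the diffusion coefficient $\sqrt2$ then yields the stated quadratic expression.

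The crux is the justification that $\mathcal{E}$ is a genuine martingale rather than merely a nonnegative supermartingale, since under linear growth alone Novikov's condition may fail. I would handle this by localization: stopping at $\tau_N := \inf\{t : |X^1(t)| \ge N\}$, the change of measure is valid on each interval $[0, T \wedge \tau_N]$, and the uniform second-moment bound from the first step controls $\bbE^{\bbP^1}\big[\int_0^{T \wedge \tau_N} |h|^2\, ds\big]$ uniformly in $N$, supplying the uniform integrability needed to pass to the limit and conclude $\bbE^{\bbP^1}[\mathcal{E}_T] = 1$. A secondary point is the identification $\mL^{\widetilde{\bbP}}(X^1[T]) = \nu^2$, which relies on uniqueness in law for the SDE with drift $b^2$; in the affine setting relevant to this paper the drifts are Lipschitz, so this is immediate, while for the general statement one invokes the weak well-posedness assumed in the cited source.
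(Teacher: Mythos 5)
The paper does not prove this lemma at all: it is imported verbatim as a citation to Lemma 3.5 and Remark 3.6 of \cite{conforti2023projected}, so there is no internal proof to compare against. Your Girsanov-plus-localization argument is the standard route to such entropy identities and is essentially the right one; the decomposition into (i) a priori second moments, (ii) validity of the exponential change of measure, (iii) identification of $d\nu^1/d\nu^2$ as a path functional (via rewriting $\int h\cdot dW^1=\tfrac{1}{\sqrt2}\int h\cdot(dX^1-b^1\,dt)$), and (iv) the martingale cancellation in $\mathbb{E}^{\mathbb{P}^1}[\log\mathcal{E}_T^{-1}]$ is sound.

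There is, however, one concrete discrepancy you should not paper over: your own computation does \emph{not} yield the stated constant. With $h=\tfrac{1}{\sqrt2}(b^1-b^2)$ you correctly arrive at
\begin{equation*}
\mathcal{H}\big(\mathcal{L}(X^1[T])\,\big|\,\mathcal{L}(X^2[T])\big)=\tfrac12\,\mathbb{E}^{\mathbb{P}^1}\Big[\int_0^T|h(t,X^1(t))|^2\,dt\Big]=\tfrac14\,\mathbb{E}^{\mathbb{P}^1}\Big[\int_0^T|b^1(t,X^1(t))-b^2(t,X^1(t))|^2\,dt\Big],
\end{equation*}
i.e.\ the prefactor is $\tfrac14$, not the $\tfrac12$ displayed in the lemma. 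Your closing sentence asserting that this ``yields the stated quadratic expression'' is therefore wrong as written; the honest conclusion is that the lemma's statement carries a typo (the general formula is $\tfrac12\mathbb{E}\int|\sigma^{-1}(b^1-b^2)|^2$ with $\sigma=\sqrt2$ here). This is corroborated by the paper itself, which applies the lemma with the constant $\tfrac14$ in the proof of Proposition \ref{p:uniqueness} (and consistently obtains $\beta^2/8$ in the proof of Theorem \ref{t:r:mlfeConv}). A secondary, more minor point: in step (ii), a uniform bound on $\mathbb{E}\big[\int_0^{T\wedge\tau_N}|h|^2\,ds\big]$ does not by itself give the uniform integrability of $\mathcal{E}_{T\wedge\tau_N}$; the clean localization argument shows $\mathbb{E}^{\mathbb{P}^1}[\mathcal{E}_{T\wedge\tau_N}]=1$ for each $N$ (stopped drift bounded) and then $\mathbb{E}^{\mathbb{P}^1}[\mathcal{E}_{T\wedge\tau_N}\mathbf{1}_{\{\tau_N\le T\}}]\to0$ because, under the tilted measure, $X^1$ is a (non-exploding) solution with linear-growth drift $b^2$, so the exit probability from the ball of radius $N$ before $T$ vanishes. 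With that repair and the corrected constant, your proof goes through.
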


Next, we present a version of the classical \emph{weighted Pinsker inequality}.

\begin{lemma}[Theorem 2.1(ii) of \cite{villani2005pinsker}]
    \label{l:pinsker}
    Let $\mathcal{X}$ be a Polish space, and $\nu$ and $\nu'$ be probability measures in $\mP(\mathcal{X})$. For any measurable, vector-valued function $f: \mathcal{X} \rightarrow \R^d$ and $\eps \in (0, \infty)$, we have,
    \begin{equation*}
        |\langle \nu - \nu', f \rangle|^2 \leq \frac{4}{\eps}\bigg(1 + \log \int_{\mathcal{X}}\exp\Big({\frac{\eps}{2}|f|^2}\Big) d\nu'\bigg) \mathcal{H}(\nu|\nu').
    \end{equation*}
\end{lemma}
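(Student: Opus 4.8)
The statement is Bolley and Villani's weighted Csiszár--Kullback--Pinsker inequality, so one option is simply to cite it; I sketch instead how I would reconstruct the proof. \textbf{Reduction to a scalar test function.} Write $H := \mH(\nu|\nu')$ and $L := \log\int_{\mathcal X} e^{\frac{\eps}{2}|f|^2}\,d\nu'$, noting $L \ge 0$ by Jensen's inequality. Since $\langle \nu-\nu', f\rangle \in \R^d$, I would pick a unit vector $u \in \R^d$ realizing $|\langle\nu-\nu',f\rangle| = \langle u, \langle\nu-\nu',f\rangle\rangle$ and set $h := \langle u, f\rangle$, so that $D := \langle \nu-\nu', h\rangle \ge 0$ equals the left-hand side and, by Cauchy--Schwarz, $|h| \le |f|$ pointwise. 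It then suffices to prove $D^2 \le \tfrac{4}{\eps}(1+L)H$.

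\textbf{Gibbs variational inequality.} For any measurable $g$ with $\int e^g\,d\nu'<\infty$, the elementary identity $\mH(\nu|\nu') = \int g\,d\nu - \log\int e^g\,d\nu' + \mH(\nu|\nu'_g)$, where $\nu'_g$ has density proportional to $e^g$ with respect to $\nu'$, together with nonnegativity of relative entropy, yields $\int g\,d\nu \le H + \log\int e^g\,d\nu'$. I would apply this to $g = \theta\tilde h$ with $\theta>0$, where $\tilde h := h - \int h\,d\nu'$ is the $\nu'$-centering of $h$. Since $\int\tilde h\,d\nu = D$, this gives, for every $\theta > 0$,
\[
\theta D \le H + \Lambda(\theta), \qquad \Lambda(\theta) := \log\int_{\mathcal X} e^{\theta\tilde h}\,d\nu'.
\]

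\textbf{Controlling the centered cumulant and optimizing.} The remaining task is to bound the centered cumulant generating function $\Lambda$ by a quadratic $\Lambda(\theta) \le \tfrac{1+L}{\eps}\theta^2$, after which minimizing $\theta\mapsto \tfrac{1}{\theta}\big(H + \tfrac{1+L}{\eps}\theta^2\big)$ over $\theta>0$ (optimal $\theta^\ast \asymp \eps D/(1+L)$) produces exactly $D^2 \le \tfrac{4}{\eps}(1+L)H$. To obtain the quadratic bound I would combine Young's inequality $\theta\tilde h \le \tfrac{\theta^2}{2\eps} + \tfrac{\eps}{2}\tilde h^2$ with the weighted exponential moment: the centering $\int\tilde h\,d\nu' = 0$ annihilates the first-order term, and the pointwise bound $|h|\le|f|$ lets one re-express $\int e^{\frac{\eps}{2}\tilde h^2}\,d\nu'$ through the given moment $\int e^{\frac{\eps}{2}|f|^2}\,d\nu' = e^L$.

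\textbf{Main obstacle.} The crux is the \emph{multiplicative} (rather than additive) dependence on $H$. A symmetric use of Young's inequality --- bounding $\theta\int h\,d\nu$ against $H$ and $-\theta\int h\,d\nu'$ against $0$ separately --- only yields the weaker additive bound $D^2 \le \tfrac{4}{\eps}(H + 2L)$, which does not vanish as $H\to0$ and is therefore useless for the exponential-convergence application driving this paper. Extracting the sharp factor $(1+L)H$ instead forces one to keep the centering inside the cumulant $\Lambda$ and to control $\Lambda''(\theta) = \operatorname{Var}_{\nu'_\theta}(\tilde h)$, i.e. the variance of $h$ under the exponentially tilted measures $\nu'_\theta \propto e^{\theta\tilde h}\nu'$, uniformly over the relevant range of $\theta$ in terms of the single weighted moment $\int e^{\frac{\eps}{2}|f|^2}\,d\nu'$. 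Tracking the constants so that precisely the clean factor $\big(1 + \log\int e^{\frac{\eps}{2}|f|^2}\,d\nu'\big)$ emerges, rather than something growing like $e^L$, is the delicate and genuinely Bolley--Villani part of the argument.
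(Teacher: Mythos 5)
First, a point of comparison: the paper does not prove this lemma at all --- it is quoted as Theorem~2.1(ii) of the cited Bolley--Villani reference --- so your first option, simply citing it, is exactly what the paper does and is all that is needed here. Your reconstruction sketch is therefore held to the standard of an actual proof, and judged that way it has a genuine gap at precisely the step that carries all the content. The skeleton is sound: the reduction to the scalar $h=\langle u,f\rangle$ with $|h|\le|f|$, the Gibbs/Donsker--Varadhan bound $\theta D\le \mH(\nu|\nu')+\Lambda(\theta)$, and the optimization in $\theta$ (which indeed returns $D^2\le\tfrac{4}{\eps}(1+L)\mH(\nu|\nu')$ once $\Lambda(\theta)\le\tfrac{1+L}{\eps}\theta^2$ is available) are all correct. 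But the quadratic cumulant bound is never established, and by the Bobkov--G\"otze duality between quadratic bounds on log-Laplace transforms of a centered function and Gaussian deviation bounds in terms of relative entropy, the estimate $\Lambda(\theta)\le\tfrac{1+L}{\eps}\theta^2$ for all $\theta$ is \emph{equivalent} to the inequality $D\le 2\sqrt{(1+L)\mH(\nu|\nu')/\eps}$ holding for all $\nu$. So the sketch has not reduced the theorem to anything simpler; it has restated it.

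Moreover, the route you indicate for that step does not close as written. Young's inequality gives $\theta\tilde h\le\tfrac{\theta^2}{2\eps}+\tfrac{\eps}{2}\tilde h^2$, but $\tilde h^2=(h-\int h\,d\nu')^2$ is \emph{not} pointwise dominated by $|f|^2$, and the crude remedy $\tilde h^2\le 2h^2+2(\int h\,d\nu')^2$ doubles the Gaussian exponent, so one is left needing $\int e^{\eps h^2}\,d\nu'$, which is not controlled by --- and can be infinite even when --- $\int e^{\frac{\eps}{2}|f|^2}\,d\nu'$ is finite. Even when this moment issue is ignored, the Young route yields only the additive bound $\Lambda(\theta)\le\tfrac{\theta^2}{2\eps}+L$, which, as you yourself observe, destroys the multiplicative dependence on $\mH(\nu|\nu')$ that the application requires. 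The original Bolley--Villani argument sidesteps the cumulant generating function entirely, working with $\int\varphi\,|d\nu/d\nu'-1|\,d\nu'$, the elementary inequality $uv\le(u\log u-u+1)+(e^v-1)$, and a careful splitting to extract the factor $(1+L)$; a self-contained write-up should either reproduce that argument or genuinely prove the cumulant bound (for instance via $e^x\le x+e^{x^2}$ and Jensen in the regime of small $\theta$, with a separate treatment of large $\theta$), tracking constants throughout. As a citation with commentary your proposal is acceptable; as a proof it is not yet one.
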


We also recall an {easily derived} formula for the conditional mean and variance of Gaussian distributions, {which is included here only because we specialize it to symmetric Gaussian measures in $\mPS_2$}. Its proof is deferred to Appendix \ref{ap:pdEstimate_condGauss}.

\begin{lemma}[Conditional Gaussian distribution] \label{l:condGauss} Suppose $\lambda \in \mPS_2$ is a centered non-degenerate Gaussian measure. Then there exist $a, b, c \in \R$ such that $a > \max\{|b|, |c|\}$ and $\lambda \sim \mathcal{N}(0, \Mat_2(a, b, c))$. Moreover, for $\Y = (Y_{-1}, Y_0, Y_1) \sim \lambda$ we have
\begin{equation}
\begin{aligned}
    \bbE^{\lambda}\big[Y_{-1}|Y_0 = x, Y_1 = y\big] &= \bigg(\frac{ab - bc}{a^2-b^2}\bigg)x + \bigg( \frac{ac - b^2}{a^2 - b^2}\bigg)y, \\
    \text{\normalfont Var}^\lambda\big[Y_{-1}|Y_0 = x, Y_1 = y\big] &= a - \frac{ab^2 - 2b^2c + ac^2}{a^2 - b^2}. 
\end{aligned}
\label{e:gaussConditionals}
\end{equation}
In particular, the conditional variance under $\lambda$ of $Y_{-1}$ given $Y_0$ and $Y_1$ is a (deterministic) constant. 
\end{lemma}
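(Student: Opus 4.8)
The plan is to reduce everything to the standard Gaussian conditioning (Schur-complement) formula once the covariance matrix has been put into the parametrized form supplied by Lemma \ref{l:gSym}. First I would invoke Lemma \ref{l:gSym}: since $\lambda \in \mPS_2$ is a centered Gaussian measure, its covariance matrix $\Sigma_\lambda$ lies in $\mSpace_2$, so by Definition \ref{d:mSpace} there exist $a, b, c \in \R$ with $\Sigma_\lambda = \Mat_2(a, b, c)$. Under the ordering $(Y_{-1}, Y_0, Y_1)$ this means $\text{Var}^\lambda(Y_i) = a$ for each $i$, $\text{Cov}^\lambda(Y_0, Y_{\pm 1}) = b$, and $\text{Cov}^\lambda(Y_{-1}, Y_1) = c$; here one must keep track of the relabeling $x_2 \mapsto x_{-1}$ so that the leaf--leaf entry is $c$ and the entries touching the root are $b$.

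Next I would establish the inequality $a > \max\{|b|, |c|\}$. Because $\lambda$ is non-degenerate, $\Sigma_\lambda$ is strictly positive definite, and hence so is every principal submatrix. Applying this to the $2\times 2$ submatrices $\begin{bmatrix} a & b \\ b & a\end{bmatrix}$ (the covariance of $(Y_0, Y_1)$) and $\begin{bmatrix} a & c \\ c & a\end{bmatrix}$ (the covariance of $(Y_{-1}, Y_1)$) forces $a^2 - b^2 > 0$ and $a^2 - c^2 > 0$; together with $a > 0$ these give $a > |b|$ and $a > |c|$, as claimed.

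For the conditional mean and variance I would partition the vector into the scalar $Y_{-1}$ and the pair $(Y_0, Y_1)$, so that the relevant blocks are $\Sigma_{11} = a$, $\Sigma_{12} = (b, c)$, and $\Sigma_{22} = \begin{bmatrix} a & b \\ b & a\end{bmatrix}$, with $\Sigma_{22}$ invertible precisely because $a > |b|$. Computing $\Sigma_{22}^{-1} = (a^2 - b^2)^{-1}\begin{bmatrix} a & -b \\ -b & a\end{bmatrix}$ and substituting into the standard formulas $\bbE^\lambda[Y_{-1} \mid Y_0 = x, Y_1 = y] = \Sigma_{12}\Sigma_{22}^{-1}(x, y)^*$ and $\text{Var}^\lambda[Y_{-1}\mid Y_0, Y_1] = \Sigma_{11} - \Sigma_{12}\Sigma_{22}^{-1}\Sigma_{21}$ yields exactly the expressions in \eqref{e:gaussConditionals} after routine arithmetic. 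The final assertion, that this conditional variance is a deterministic constant, requires no extra work: it equals the Schur complement $a - \Sigma_{12}\Sigma_{22}^{-1}\Sigma_{21}$, which for jointly Gaussian vectors never depends on the conditioning values $x, y$.

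There is no genuine obstacle here; the statement is a direct specialization of the Gaussian conditioning formula to the symmetric three-point covariance. The only place demanding care is the bookkeeping: ensuring that the index relabeling $\{0,1,2\} \leftrightarrow \{0, 1, -1\}$ is applied consistently so that $b$ and $c$ land in the correct blocks, and confirming invertibility of $\Sigma_{22}$ from the non-degeneracy hypothesis before invoking the conditioning formula.
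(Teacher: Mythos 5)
Your proposal is correct and follows essentially the same route as the paper: invoke Lemma \ref{l:gSym} for the parametrization $\Sigma_\lambda = \Mat_2(a,b,c)$, deduce $a > \max\{|b|,|c|\}$ from positive definiteness of the $2\times 2$ principal submatrices (the paper outsources this step to Lemma \ref{l:pdEstimate}, whose proof is exactly your determinant argument), and then apply the standard Gaussian conditioning formula, which the paper simply cites while you write out the Schur complement explicitly. The arithmetic in your block computation checks out against \eqref{e:gaussConditionals}.
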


We now use the above result to compute the integral on the right-hand side of Lemma \ref{l:pinsker} in the case of conditional Gaussian measures. This fact will be highly useful in the proof of uniqueness in Proposition \ref{p:uniqueness} and the proof of Theorem \ref{t:r:mlfeConv} in Section \ref{ss:LongMLFE}.

\begin{lemma}[Conditional exponential integrability]
\label{l:Lambda}
Let   $\lambda \in \mPS_2$ be a centered, non-degenerate Gaussian measure. Let $\Y = (Y_{-1}, Y_0, Y_1) \sim \lambda$ and $\eps \in (0, \infty)$, and define 
    \begin{equation}
        \Lambda_\eps^\lambda(x, y) := \frac{4}{\eps} \log \bbE^\lambda \bigg[ \exp \Big( \frac{\eps}{2}\big|Y_{-1} - \bbE^\lambda[Y_{-1}|Y_0, Y_1]\big|^2 \Big)  \bigg| Y_0 = x, Y_1 = y\bigg], \quad x, y \in \R.
        \label{e:wp:Lambda1}
    \end{equation}
    Let $\sigma_\lambda$ be the conditional variance under $\lambda$ of $Y_{-1}$ given $(Y_0, Y_1)$, which by Lemma \ref{l:condGauss} is a nonnegative constant. Then for all $\eps < \sigma_\lambda^{-1}$, $\Lambda_\eps^\lambda(x,y) \equiv \Lambda_\eps^\lambda$ is constant in $(x, y)$ and we have
    \begin{equation}
        \Lambda_\eps^\lambda = \frac{2}{\eps} \log\bigg( \frac{2\pi}{1 - \eps \sigma_\lambda}\bigg).
    \end{equation}
\end{lemma}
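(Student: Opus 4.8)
The plan is to reduce the computation of $\Lambda_\eps^\lambda$ to a one-dimensional Gaussian integral, using the key structural fact from Lemma \ref{l:condGauss} that the conditional law of $Y_{-1}$ given $(Y_0, Y_1) = (x,y)$ is itself Gaussian with a \emph{constant} (deterministic) variance $\sigma_\lambda$. Write $m(x,y) := \bbE^\lambda[Y_{-1}|Y_0 = x, Y_1 = y]$ for the conditional mean. Then, conditionally on $Y_0 = x, Y_1 = y$, the random variable $Y_{-1} - m(x,y)$ is centered Gaussian with variance $\sigma_\lambda$, and crucially its law does \emph{not} depend on $(x,y)$. Hence the conditional expectation inside \eqref{e:wp:Lambda1} is simply the moment generating function of the square of a single centered Gaussian of variance $\sigma_\lambda$, evaluated at $\eps/2$, which is manifestly independent of $(x,y)$. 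This immediately yields that $\Lambda_\eps^\lambda$ is constant in $(x,y)$.

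The main step is then the elementary evaluation of $\bbE[\exp(\tfrac{\eps}{2} N^2)]$ for $N \sim \mathcal{N}(0, \sigma_\lambda)$. First I would recall the standard Gaussian integral
\begin{equation*}
    \int_\R \exp\Big( \tfrac{\eps}{2} z^2 \Big) \frac{1}{\sqrt{2\pi \sigma_\lambda}} \exp\Big( -\tfrac{z^2}{2\sigma_\lambda} \Big) dz = \big(1 - \eps \sigma_\lambda\big)^{-1/2},
\end{equation*}
which converges precisely when $\tfrac{1}{\sigma_\lambda} - \eps > 0$, i.e.\ $\eps < \sigma_\lambda^{-1}$, matching the hypothesis. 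Then, substituting this into the definition \eqref{e:wp:Lambda1},
\begin{equation*}
    \Lambda_\eps^\lambda = \frac{4}{\eps} \log \big(1 - \eps \sigma_\lambda\big)^{-1/2} = \frac{2}{\eps} \log\frac{1}{1 - \eps \sigma_\lambda},
\end{equation*}
which gives the claimed formula once one accounts for the stated normalization. I would double-check the precise constant: the statement has a factor $2\pi$ inside the logarithm, whereas the clean computation above produces only $(1-\eps\sigma_\lambda)^{-1}$; the discrepancy must come from whether the normalization constant $(2\pi\sigma_\lambda)^{-1/2}$ of the conditional Gaussian density is included in or excluded from the expectation, so I would track that normalization carefully to reconcile with the stated $\frac{2}{\eps}\log\big(\frac{2\pi}{1-\eps\sigma_\lambda}\big)$.

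There is essentially no serious obstacle here; the only subtle point is bookkeeping. The real content is entirely borrowed from Lemma \ref{l:condGauss}: without the fact that the conditional variance is a constant independent of the conditioning values, the expression $\Lambda_\eps^\lambda(x,y)$ would genuinely depend on $(x,y)$ and the $(x,y)$-independence claim would fail. So the logical heart of the proof is simply \emph{invoking} that constancy and then performing the Gaussian moment generating function computation. I would therefore structure the proof as: (i) cite Lemma \ref{l:condGauss} to assert $Y_{-1} - m(Y_0,Y_1) \mid (Y_0,Y_1)$ is $\mathcal{N}(0,\sigma_\lambda)$ with $\sigma_\lambda$ constant; (ii) observe the conditional expectation thus reduces to a deterministic Gaussian MGF; (iii) evaluate it, checking the convergence condition $\eps < \sigma_\lambda^{-1}$ and reconciling the normalization constant to obtain the stated closed form.
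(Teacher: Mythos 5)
Your proposal is correct and follows essentially the same route as the paper: invoke Lemma \ref{l:condGauss} to reduce the conditional expectation to the moment generating function of $N^2$ for $N \sim \mathcal{N}(0,\sigma_\lambda)$, which is independent of $(x,y)$, and evaluate the Gaussian integral under the condition $\eps < \sigma_\lambda^{-1}$. The discrepancy you flagged resolves in your favor: the correct value of the normalized integral is $(1-\eps\sigma_\lambda)^{-1/2}$, giving $\Lambda_\eps^\lambda = \tfrac{2}{\eps}\log\big(\tfrac{1}{1-\eps\sigma_\lambda}\big)$, and the extra factor of $2\pi$ in the paper's stated formula is a minor arithmetic slip in its penultimate display (the $\sqrt{2\pi\sigma_\lambda}$ prefactor should cancel entirely); this is harmless downstream since $\Lambda_\eps^\lambda$ is only ever used as an upper bound.
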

\begin{proof}
    Let $\lambda(x_{-1}|x_0, x_1) := \lambda(\x)/\bar{\lambda}(x_0, x_1)$ denote the conditional density of $\lambda$ and set
    \begin{equation}
        m_\lambda(x, y) := \bbE^\lambda[Y_{-1}|Y_0 = x, Y_1 = y]
    \end{equation}
    {to be} the conditional mean of $Y_{-1}$ under $\lambda$. We can rewrite \eqref{e:wp:Lambda1} as
    \begin{equation}
        \Lambda_\eps^\lambda(x, y) = \frac{4}{\eps} \log \int_\R \exp \bigg( \frac{\eps}{2}|z - m_\lambda(x, y)|^2\bigg) \lambda(z|x, y) dz.
        \label{e:wp:Lambda2}
    \end{equation}
    By Lemma \ref{l:condGauss} we have $\lambda(\cdot|x, y) \sim \mathcal{N}\big( m_\lambda(x, y), \sigma_\lambda)$ and thus
    \begin{equation}
    \lambda(z|x, y) =  \frac{1}{\sqrt{2\stat \sigma_\lambda}} \exp \bigg( -\frac{1}{2\sigma_\lambda}|z - m_\lambda(x, y)|^2 \bigg). 
    \end{equation}
    Combining the last two displays yields
    \begin{equation}
\int_\R \exp \bigg( \frac{\eps}{2}|z - m_\lambda(x, y)|^2\bigg) \lambda(z|x, y) dz = \frac{1}{\sqrt{2\stat \sigma_\lambda}} \int_\R \exp \bigg( \frac{\eps \sigma_\lambda - 1}{2\sigma_\lambda}|z - m_\lambda(x, y)|^2\bigg)dz.    \end{equation}
    Then for all $\eps < \sigma_\lambda^{-1}$ we have
    \begin{equation}
    \begin{aligned}
         \frac{1}{\sqrt{2\pi \sigma_\lambda}} \int_\R \exp \bigg( \frac{\eps \sigma_\lambda - 1}{2\sigma_\lambda}|z - m_\lambda(x, y)|^2\bigg)dz 
        &= \sqrt{\frac{2\pi}{1 - \eps \sigma_\lambda}}.
    \end{aligned}
    \end{equation}
Substituting the last two displays into \eqref{e:wp:Lambda2} concludes the proof.\end{proof}
\subsection{Proof of Theorem \ref{t:r:wp}}
\label{ss:wpProof}
Throughout Section \ref{ss:wpProof}, we fix $\kap \in \bbN$ with $\kap \geq 2$.
\subsubsection{Preliminaries and a nonlinear Riccati equation}  For convenience we use $\sfI := \sfI_{1 + \kap}$ to denote the $(1 + \kap) \times (1 + \kap)$ identity matrix. We first define several matrix functionals that correspond to certain conditional expectation operators.
\begin{definition}[Auxiliary matrices]
        Let $\sfR \in \R^{(1 +\kap) \times (1 + \kap)}$ denote the matrix of all $1$'s and define the matrices $\sfQ, \sfP \in \R^{(1 +\kap) \times (1 + \kap)}$ as follows: 
    \begin{equation}
        \begin{aligned}
            \sfQ_{ij} := \begin{cases}
                1 &\quad i = j = 0, \\
                0 &\quad \text{otherwise}.
            \end{cases}, \quad 
            \sfP_{ij} := \begin{cases}
                1 &\quad i = 0,  j \neq 0 , \\
                0 &\quad \text{otherwise}.
            \end{cases} \\
        \end{aligned}
        \label{e:auxMat}
    \end{equation}
    for $i, j \in \{0, \ldots, \kap\}$. Let $\sfJ := \sfP + \sfP^*$ where recall $\sfP^*$ is the transpose of $P$ and $\sfK := \sfR - \sfI - \sfJ$. Then the definitions of $\mSpace_\kap$ and $\Mat_\kap$ from Definition \ref{d:mSpace} imply that 
    \begin{equation}
        \Mat_\kap(a, b, c) = a \sfI + b \sfJ + c \sfK, \quad (a, b, c) \in \R^3.
        \label{e:Mat2}
    \end{equation}
    \label{d:aMatrices}
\end{definition}

    We now define several functionals on $\mSpace_\kap$.
\begin{definition}[Conditional expectation operators]
\label{d:CEop}
Define the functions $f:\R^3 \rightarrow \R$ and $g:\R^3 \rightarrow \R$ by
\begin{equation}
\label{e:fg}
    f(a, b, c) := \frac{b(a - c)}{a^2 - b^2}, \quad g(a, b, c) := \frac{ac - b^2}{a^2-b^2}.
\end{equation}
Fix $\alpha, \beta \in \R$ and define the functionals $\ta, \tb:\mSpace_\kap \rightarrow \R^3$ in the following way. For $\sfA \in \mSpace_\kap$, define $ (a, b, c) := \Mat_\kap^{-1}(\sfA)$ and set 
    \begin{equation}
    \begin{aligned}
        &\ta(\sfA) := \left \{ \begin{aligned} &\alpha + \frac{\beta(\kap - 1)}{\kap}f(a, b, c) &\quad &\text{if }|a| \neq |b|, \\ &0 &\quad & \text{otherwise.} \end{aligned} \right. \\  &\tb(\sfA) := \left \{ \begin{aligned} &\frac{\beta}{\kap} + \frac{\beta(\kap - 1)}{\kap}g(a, b, c), &\quad &\text{if } |a| \neq |b|, \\
        &0 &\quad & \text{otherwise.} \end{aligned}\right.
        \label{e:t}
    \end{aligned}
    \end{equation}
    Let $\sfQ, \sfP$ be as in Definition \ref{d:aMatrices}. Define the map $\sfL: \mSpace_\kap \rightarrow \R^{(1 + \kap) \times (1 + \kap)}$ by
    \begin{equation}
        \sfL(\sfA) := \alpha \sfQ + \ta(\sfA)(\sfI  - \sfQ) + \frac{\beta}{\kap} \sfP + \tb(\sfA)\sfP^*.
        \label{e:sL}
    \end{equation}
\end{definition}

\begin{remark}[Conditional expectations and Riccati equation] \normalfont
   We now elucidate the role of $\ta$ and $\tb$ in defining the necessary conditional expectations arising in the $\kap$-GMLFE. In light of Lemma \ref{l:gSym}, Lemma \ref{l:condGauss}, and \eqref{e:sym1}, given any centered non-degenerate Gaussian measure $\theta \in \mPS_\kap$, and $\Y = (Y_0, \ldots, Y_\kap) \sim \theta$, it follows that
\begin{equation}
    \bbE^\theta\bigg[\alpha Y_{0} + \frac{\beta}{\kap}\sum_{v = 1}^\kap Y_v \bigg|Y_0 = x, Y_1 = y\bigg] = \ta(\Sigma_\theta) x + \tb(\Sigma_\theta)y, \quad x, y \in \R.\label{e:tCE}
\end{equation}
We emphasize that by the symmetry of $\theta$ described in Definition \ref{def:not:symProb} and Definition \ref{d:mSpace}, it follows that there exist $a, b, c \in \R$ such that $\mL(Y_v, Y_0, Y_1) = \mathcal{N}(0, \Mat_2(a, b, c))$ for some $a, b, c \in \R$ for all $v \in \{2, \ldots, \kap\}$.

Given $\alpha, \beta \in \R$, and $\sfL$ as defined in \eqref{e:sL}, define the function $\sfF = \sfF_{\alpha, \beta}: \mSpace_\kap \rightarrow \R^{(1 + \kap) \times (1 + \kap)}$ by
\begin{equation}
    \label{e:sF}
    \sfF(\sfA) := 2\sfI - \sfL(\sfA) \sfA - \sfA\sfL(\sfA)^* ,\quad \sfA \in \mSpace_\kap,
\end{equation}
For $\Sigma_\lambda \in \mSpace_\kap^+$, consider the following nonlinear Riccati equation for $t \in [0, \infty)$:
\begin{equation}
\label{e:riccati}
    \frac{d}{dt} \Ric_t = \sfF(\sfV_t), \quad \Ric_0 = \Sigma_\lambda,
\end{equation}
with $\sfL$ as in \eqref{e:sL}. We will establish well-posedness of this nonlinear Riccati equation and show that its solutions can be used to construct solutions to the $\kap$-GMLFE with coefficients $(\alpha, \beta)$ and initial condition $\lambda \sim \mathcal{N}(0, \Sigma_\lambda)$. 
\end{remark}

We conclude this section with a lemma that shows that the functions $f$ and $g$ defined in \eqref{e:fg}, while potentially singular, behave nicely on the set $\Mat_\kap^{-1}(\mSpace_\kap^+)$. Its proof can be found in Appendix \ref{ap:pdEstimate_condGauss}.
\begin{lemma}[Estimates on $f, g$]
\label{l:pdEstimate}
Suppose $\kap \in \bbN$ satisfies $\kap \geq 2$ and let $\sfA \in \mSpace_\kap^+$. Then $(a, b, c) := \Mat_\kap^{-1}(\sfA)$ satisfies \begin{equation}
    \label{e:0gaussLemma}
a > \max\{|b|, |c|\}.    
\end{equation}
Moreover, the functions $f, g :\R^3 \rightarrow \R$ defined in \eqref{e:fg} satisfy
    \begin{equation}
    \label{e:matrixIneq2}
        \big| f(a, b, c) \big| \leq 2,
    \end{equation}
    and
    \begin{equation}
    \label{e:matrixIneq1}
        \big| g(a, b, c)\big| \leq 1.
    \end{equation}
\end{lemma}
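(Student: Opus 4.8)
The plan is to deduce everything from the positive definiteness of $\sfA$ by passing to well-chosen principal submatrices, and in particular to reduce the estimates on $f$ and $g$ — which a priori involve the parameter $\kap$ through the ambient matrix — to a single determinant computation for a $3\times 3$ block. Throughout I write $(a,b,c) = \Mat_\kap^{-1}(\sfA)$ and use repeatedly that every principal submatrix of a positive definite matrix is positive definite.

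First I would prove \eqref{e:0gaussLemma}. The $2\times 2$ principal submatrix of $\sfA$ on the index set $\{0,1\}$ equals $\left[\begin{smallmatrix} a & b \\ b & a\end{smallmatrix}\right]$, whose positivity forces $a>0$ and $a^2-b^2>0$, i.e. $a>|b|$; the submatrix on $\{1,2\}$, which exists precisely because $\kap\geq 2$, equals $\left[\begin{smallmatrix} a & c \\ c & a\end{smallmatrix}\right]$ and gives $a>|c|$. This yields \eqref{e:0gaussLemma}, and in particular $a-c>0$.

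Next comes the crucial reduction. The $3\times 3$ principal submatrix of $\sfA$ on $\{0,1,2\}$ is exactly $\Mat_2(a,b,c)$ and is therefore positive definite, so $\Mat_2(a,b,c)\in\mSpace_2^+$. A direct cofactor expansion gives
\begin{equation*}
\det \Mat_2(a,b,c) = (a-c)\big(a^2+ac-2b^2\big).
\end{equation*}
Since $a-c>0$, positivity of the determinant forces $a^2 + ac - 2b^2 > 0$. This single inequality is the heart of the matter: the naive estimate $a>\max\{|b|,|c|\}$ is by itself insufficient (for instance $(a,b,c)=(1,0.9,-0.9)$ satisfies it yet gives $g=-9$, and indeed $a^2+ac-2b^2=-1.52<0$, so this triple does not lie in $\mSpace_2^+$). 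Thus the real content of the lemma lives in the determinant constraint coming from the off-diagonal block, and recognizing that restricting to the $3\times 3$ submatrix both eliminates the apparent $\kap$-dependence and produces exactly the inequality needed is the main (and essentially the only) obstacle.

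Finally I would cash in the determinant inequality. For \eqref{e:matrixIneq1}, recall $g = (ac-b^2)/(a^2-b^2)$ with $a^2-b^2>0$: the bound $g\leq 1$ is equivalent to $ac\leq a^2$, which holds since $a>0$ and $c<a$, while $g\geq -1$ is equivalent to $a^2+ac-2b^2\geq 0$, which is precisely the determinant inequality. For \eqref{e:matrixIneq2}, I would rearrange $a^2+ac-2b^2>0$ into $2(a^2-b^2) > a(a-c)$, and combine it with $|b|<a$ and $a-c>0$, which give $|b|(a-c) < a(a-c)$; chaining these yields $|b(a-c)| < 2(a^2-b^2)$ and hence $|f| = |b(a-c)|/(a^2-b^2) < 2$. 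The determinant expansion and these algebraic rearrangements are routine once the reduction above is in place.
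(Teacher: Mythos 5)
Your proof is correct and follows the same skeleton as the paper's: both arguments extract $a>|b|$ and $a>|c|$ from $2\times 2$ principal minors and then derive the key inequality $a^2+ac-2b^2>0$ from positivity of the $3\times 3$ principal submatrix on $\{0,1,2\}$ (your $\Mat_2(a,b,c)$ is exactly the paper's $R_3$; you get the inequality from the determinant factorization $(a-c)(a^2+ac-2b^2)$, the paper from the explicit eigenvalues $a-c$ and $a+\tfrac{c}{2}\pm\tfrac12\sqrt{8b^2+c^2}$, which is the same computation in different clothing). The bound on $g$ is handled identically. Where you genuinely diverge is the bound on $f$: the paper passes through the equivalent form $|b|\leq\sqrt{(a^2+ac)/2}$ and a chain of square-root manipulations culminating in $\sqrt{16a^2+\omega^2}\leq\omega+4a$, whereas you simply rewrite $a^2+ac-2b^2>0$ as $a(a-c)<2(a^2-b^2)$ and chain it with $|b|(a-c)\leq a(a-c)$ to get $|f|<2$ in two lines. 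Your version of that step is shorter, avoids the radicals entirely, and even yields the strict inequality; it is a clear improvement over the paper's argument for \eqref{e:matrixIneq2}.
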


\subsubsection{Solving the nonlinear Riccati equation}
To establish well-posedness of the Riccati equation \eqref{e:riccati}, we begin by showing that the functions $\sfL$ and $\sfF$ (defined in \eqref{e:sL} and \eqref{e:sF}, respectively) are suitably regular. 

\begin{lemma}[Regularity of Riccati functionals]
\label{l:sF_sL_estimates}
The functions $\ta, \tb:\mSpace_\kap \rightarrow \R$ and the function $\sfL:\mSpace_\kap \rightarrow \R^{(1 + \kap) \times (1 + \kap)}$, specified in Definition \ref{d:CEop} are uniformly bounded on $\mSpace_\kap^+$, that is 
    \begin{equation*}
        \sup_{\sfA \in \mSpace_\kap^+} \big\{|\ta(\sfA)| + |\tb(\sfA)| + \|\sfL(\sfA)\| \big\}< \infty.
    \end{equation*}Moreover, the function $\sfF:\mSpace_\kap \rightarrow \R^{(1 + \kap) \times (1 + \kap)}$ defined in \eqref{e:sF} maps $\mSpace_\kap$ into $\mSpace_\kap$, and the function $\sfF$ is uniformly Lipschitz on $\mSpace_\kap^+$,
    that is, there exists $C_F \in (0, \infty)$ such that
    \begin{align}
    \|\sfF(\sfA) - \sfF(\mathsf{B})\| & \leq C_F\|\sfA - \mathsf{B}\|, \quad \sfA, \mathsf{B} \in \mSpace_\kap^+. \label{e:sFLip}
    \end{align}
\end{lemma}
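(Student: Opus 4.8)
The plan is to reduce all three assertions to the scalar rational functions $f,g$ of \eqref{e:fg} via the linear isomorphism $\Mat_\kap:\R^3\to\mSpace_\kap$, exploiting the explicit entry structure of $\sfL$.

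I would first dispose of the uniform bounds. For $\sfA\in\mSpace_\kap^+$ with $(a,b,c):=\Mat_\kap^{-1}(\sfA)$, Lemma \ref{l:pdEstimate} gives $a>\max\{|b|,|c|\}$, so $|a|\neq|b|$ and the nondegenerate branches of \eqref{e:t} are active, while the same lemma supplies $|f|\le 2$ and $|g|\le 1$. Feeding these into \eqref{e:t} gives $|\ta(\sfA)|\le|\alpha|+2|\beta|$ and $|\tb(\sfA)|\le|\beta|$ uniformly over $\mSpace_\kap^+$; since $\sfQ,\sfI-\sfQ,\sfP,\sfP^*$ are fixed $0$--$1$ matrices with disjoint supports, the definition \eqref{e:sL} immediately bounds $\|\sfL(\sfA)\|$ by $\max\{|\alpha|,|\ta(\sfA)|,|\beta|/\kap,|\tb(\sfA)|\}$, which is uniformly finite.

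Next, to show $\sfF(\sfA)\in\mSpace_\kap$, observe $2\sfI=\Mat_\kap(2,0,0)\in\mSpace_\kap$, so it suffices that $\sfN:=\sfL(\sfA)\sfA+\sfA\sfL(\sfA)^*\in\mSpace_\kap$. I would compute $\sfN$ entrywise from the structure of $\sfL$ (root-diagonal $\alpha$, leaf-diagonal $\ta$, row-zero off-diagonal $\beta/\kap$, column-zero off-diagonal $\tb$, all other leaf entries $0$) and of $\sfA=a\sfI+b\sfJ+c\sfK$. The matrix $\sfN$ is symmetric, and since every constituent of both $\sfL(\sfA)$ and $\sfA$ commutes with the leaf-permutation matrices, $\sfN$ is invariant under conjugation by them; this alone forces $\sfN$ to have constant leaf-diagonal, root-leaf, and leaf-leaf blocks. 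The only remaining requirement for membership in $\mSpace_\kap$ is that the root-diagonal entry $\sfN_{00}=2(\alpha a+\beta b)$ coincide with the leaf-diagonal entry $\sfN_{jj}=2(a\ta+b\tb)$, and this is exactly the elementary identity $af+bg=b$ read off from \eqref{e:fg}. The companion identity $ag+bf=c$ shows moreover that the root-leaf entry of $\sfN$ is affine in $(a,b,c)$, while the leaf-leaf entry equals $2\big[\tfrac{\beta}{\kap}b+\alpha c+\tfrac{\beta(\kap-1)}{\kap}(bg+cf)\big]$; throughout this step $|a|\neq|b|$ is guaranteed on $\mSpace_\kap^+$ by Lemma \ref{l:pdEstimate}.

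The crux is the Lipschitz bound \eqref{e:sFLip}. The difficulty is that $f$ and $g$ --- hence $\ta,\tb$ and $\sfL$ --- are \emph{not} Lipschitz on $\mSpace_\kap^+$: their common denominator $a^2-b^2$ degenerates near the corner $a=b=c$ of the positive-definite cone, where $\partial_cf\sim (a^2-b^2)^{-1}$ is unbounded. The saving observation is that, by the previous step, after passing through $\Mat_\kap^{-1}$ every coordinate of $\sfF$ is affine in $(a,b,c)$ except the leaf-leaf coordinate, whose only nonlinear ingredient is $\psi:=bg+cf$. Although $\psi$ is built from the singular $f,g$, its gradient is bounded: differentiating the defining relations $af+bg=b$ and $bf+ag=c$ in each variable and simplifying yields
\begin{equation*}
\nabla\psi=\big(\partial_a\psi,\,\partial_b\psi,\,\partial_c\psi\big)=\big(-2fg,\ 2g-f^2-g^2,\ 2f\big),
\end{equation*}
whose entries are all bounded by Lemma \ref{l:pdEstimate}. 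Because $\mSpace_\kap^+$ is the intersection of the subspace $\mSpace_\kap$ with the convex positive-definite cone and $\Mat_\kap$ is linear, the domain $\Mat_\kap^{-1}(\mSpace_\kap^+)$ is convex, so a bounded gradient integrates to a genuine Lipschitz bound for $\psi$, hence for $\sfF$; the affine coordinates are trivially Lipschitz, and equivalence of the Frobenius and $\ell^\infty$ norms carries the estimate to \eqref{e:sFLip}. The one genuinely delicate point --- and the step I expect to absorb most of the work --- is verifying this gradient identity and confirming that the apparent boundary singularities of $f,g$ cancel in $\psi$; everything else is bookkeeping built on Lemma \ref{l:pdEstimate}.
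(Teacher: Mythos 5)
Your proposal is correct and follows essentially the same route as the paper: bound $f,g$ on $\Mat_\kap^{-1}(\mSpace_\kap^+)$ via Lemma \ref{l:pdEstimate}, verify that the coordinates of $\sfF$ in the basis $(\sfI,\sfJ,\sfK)$ are affine except for the single nonlinear term $bg+cf$, and conclude Lipschitz continuity from a bounded gradient. Your gradient identity $\nabla(bg+cf)=(-2fg,\;2g-f^2-g^2,\;2f)$ checks out (and is in fact slightly more accurate than the corresponding display in the paper, which drops a harmless $-2g$ term), and your explicit appeal to the convexity of $\Mat_\kap^{-1}(\mSpace_\kap^+)$ to pass from a bounded gradient to a genuine Lipschitz bound is a detail the paper leaves implicit.
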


The proof of Lemma \ref{l:sF_sL_estimates} is deferred to Appendix \ref{ap:sF_sL_estimates}. Next, we establish well-posedness of the Riccati equation \eqref{e:riccati} by a Cauchy-Lipschitz argument. 

\begin{lemma}[Solution to the nonlinear Riccati equation] \label{l:riccati_wp} Suppose that $\alpha, \beta \in \R$ and $\Sigma_\lambda \in \mSpace_\kap^+$, and let $\sfF$ be the associated function defined in \eqref{e:sF}. Then there exists a unique global solution $\{\sfV_t\}_{t \geq 0}$ to the Riccati equation  \eqref{e:riccati}. Moreover, $\sfV_t \in \mSpace_\kap^+$ for all $t \in [0, \infty)$.
\end{lemma}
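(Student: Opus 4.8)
The plan is to treat \eqref{e:riccati} as an ODE on the open subset $\mSpace_\kap^+$ of the finite-dimensional vector space $\mSpace_\kap \cong \R^3$: establish a local-in-time solution by the Cauchy--Lipschitz (Picard--Lindel\"of) theorem, and then globalize by ruling out both finite-time blow-up of the norm and escape to the boundary $\partial\mSpace_\kap^+$. The main obstacle is precisely this last point. Since the functionals $\ta,\tb$ entering $\sfL$ are built from $f,g$ and hence singular where $|a|=|b|$, Lemma \ref{l:sF_sL_estimates} only guarantees that $\sfF$ is Lipschitz on the \emph{open} cone $\mSpace_\kap^+$; we therefore cannot invoke a global Lipschitz theory and must instead show directly that positive definiteness is preserved along the flow.

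First I would invoke Lemma \ref{l:sF_sL_estimates}: the map $\sfF$ sends $\mSpace_\kap$ into itself and is uniformly Lipschitz on the open convex set $\mSpace_\kap^+$, so by Picard--Lindel\"of there is a unique maximal solution $\{\sfV_t\}_{t\in[0,T_{\max})}$ of \eqref{e:riccati} with values in $\mSpace_\kap^+$, obeying the standard dichotomy that if $T_{\max}<\infty$ then $\{\sfV_t\}$ eventually leaves every compact subset of $\mSpace_\kap^+$. Next I would rule out blow-up of the norm: because $\sfL$ is uniformly bounded on $\mSpace_\kap^+$ (again Lemma \ref{l:sF_sL_estimates}), the formula \eqref{e:sF} gives a linear growth bound $\|\sfF(\sfA)\|\le c_0(1+\|\sfA\|)$ on $\mSpace_\kap^+$, whence Gr\"onwall's inequality bounds $\sup_{t<T_{\max}}\|\sfV_t\|$ on any finite interval. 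Consequently $\dot\sfV_t=\sfF(\sfV_t)$ is bounded, so $t\mapsto\sfV_t$ is Lipschitz in time and extends continuously to a limit $\sfV_{T_{\max}}:=\lim_{t\uparrow T_{\max}}\sfV_t$, which is positive semidefinite as a limit of positive definite matrices. By the compact-exit alternative, if $T_{\max}<\infty$ then $\sfV_{T_{\max}}\in\partial\mSpace_\kap^+$, i.e. it is degenerate.

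The crux is then a boundary-repulsion (cone invariance) argument. Assuming $T_{\max}<\infty$, I would pick a unit vector $v$ in the kernel of $\sfV_{T_{\max}}$ and set $h(t):=v^*\sfV_t v\ge 0$, so that $h(t)\to 0$ as $t\uparrow T_{\max}$. Using \eqref{e:sF} and the symmetry of $\sfV_t$, one computes $h'(t)=v^*\sfF(\sfV_t)v = 2|v|^2 - 2\,v^*\sfL(\sfV_t)\sfV_t v$. The key point is that the non-Lipschitz data enters only through the product $\sfL(\sfV_t)\sfV_t$: since $\sfL$ is uniformly bounded and $\sfV_t v\to\sfV_{T_{\max}}v=0$, the cross term satisfies $|v^*\sfL(\sfV_t)\sfV_t v|\le \|\sfL(\sfV_t)\|\,|\sfV_t v|\to 0$, so $h'(t)\to 2|v|^2=2>0$. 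Hence $h'(t)\ge 1$ for $t$ near $T_{\max}$, which forces $h$ to be increasing there, contradicting $h(t)\to 0^+$. Therefore $T_{\max}=\infty$ and $\sfV_t\in\mSpace_\kap^+$ for all $t\ge 0$.

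Finally, uniqueness of the global solution follows from the local uniqueness in the Picard--Lindel\"of step together with the fact, just proved, that the flow never leaves $\mSpace_\kap^+$, where $\sfF$ is Lipschitz. The one delicate technical point is ensuring that the limit $\sfV_{T_{\max}}$ genuinely exists and lies on the boundary as a degenerate matrix, rather than the trajectory oscillating near $\partial\mSpace_\kap^+$ without converging; this is exactly what the Lipschitz-in-time bound coming from the linear growth estimate provides, upgrading the abstract compact-exit alternative into convergence to a single degenerate limit against which the quadratic-form computation can be run.
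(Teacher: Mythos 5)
Your proposal is correct, and the first two steps (Picard--Lindel\"of on the open set $\mSpace_\kap^+$, then ruling out norm blow-up by Gr\"onwall using the uniform boundedness of $\sfL$ from Lemma \ref{l:sF_sL_estimates}) coincide with the paper's proof. Where you genuinely diverge is in the boundary-repulsion step. The paper argues via the determinant: Jacobi's formula gives $\tfrac{d}{dt}\det(\sfV_t) = \det(\sfV_t)\,\mathrm{tr}\big(\sfV_t^{-1}\tfrac{d}{dt}\sfV_t\big)$, and since $\mathrm{tr}(\sfV_t^{-1})\ge 0$ and $\mathrm{tr}(\sfL(\sfV_t)) = \alpha + (\kap-1)\ta(\sfV_t)$ is uniformly bounded, one obtains $\tfrac{d}{dt}\tfrac{1}{\det(\sfV_t)} \le \tfrac{C_1}{\det(\sfV_t)}$ and concludes by Gr\"onwall that $\det(\sfV_t)$ stays bounded away from zero on $[0,T_*)$. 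Your argument instead passes to the (Lipschitz-in-time) limit $\sfV_{T_{\max}}$, takes a kernel vector $v$, and exploits that $v^*\sfF(\sfV_t)v = 2|v|^2 - 2\,v^*\sfL(\sfV_t)\sfV_t v \to 2 > 0$, so that $h(t)=v^*\sfV_t v$ would have to be increasing while tending to $0^+$, a contradiction. Both routes rest on the same input (uniform boundedness of $\sfL$ on $\mSpace_\kap^+$); yours is more elementary in that it avoids Jacobi's formula and the explicit trace computation, and it isolates the structural reason the cone is invariant (the additive $2\sfI$ term in \eqref{e:sF} pushes the quadratic form away from zero faster than the cross term can pull it down), whereas the paper's determinant bound is quantitative and does not require first establishing convergence of $\sfV_t$ to a single boundary point. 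The one technical point you correctly flag --- upgrading the abstract compact-exit alternative to actual convergence to a degenerate limit --- is handled by your Lipschitz-in-time observation, so there is no gap.
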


\begin{proof} By Lemma \ref{l:sF_sL_estimates}, the function $\sfF$ maps $\mSpace_\kap$ into $\mSpace_\kap$ and is uniformly Lipschitz on $\mSpace_\kap^+$. Thus, we view \eqref{e:riccati} as an ordinary differential equation (ODE) on the finite-dimensional Banach space $\mSpace_\kap$ (which is isomorphic to $\R^3$ through $\Mat_\kap$). Recall that $\mSpace_\kap^+$ is an open subset of $\mSpace_\kap$. By the Picard-Lindel\"{o}f theorem (e.g., see Theorem 7.6 of \cite{amann1990ODE}), there exists a unique maximal solution $\sfV:[0, T_*) \rightarrow \mSpace_\kap^+$ to \eqref{e:riccati} for some maximal existence time $T_* \in (0, \infty]$. Moreover, as $t \uparrow T_*$, $\sfV_t$ either explodes or reaches the boundary of $\mSpace_\kap^+$, that is
\begin{equation}
    \lim_{t \,\uparrow \,T_*} \min\Big\{ \inf_{\sfA \in \partial \mSpace_\kap^+}\|\sfV_t - \sfA\|, \, \|\sfV_t\|^{-1} \Big\} = 0.
    \label{e:pl_dichotomy}
\end{equation}

We now show that $T_* = \infty$. Suppose towards contradiction that $T_* < \infty$. Lemma \ref{l:sF_sL_estimates} implies that $\sfL$ is bounded on  $\mSpace_\kap^+$, which together with \eqref{e:sF} and \eqref{e:riccati} implies that there exists $C_0 \in (0, \infty)$ such that for all $t \in [0, T_*)$, we have \begin{equation*}
\begin{aligned}
\|\sfV_t\| \leq \|\sfV_0\| + \int_0^t \|\sfF(\sfV_s)\| ds  
     \leq  \|\sfV_0\| + 2\int_0^t\big( 1 +\| \sfL(\sfV_s)\| \|\sfV_s\| \big) ds \leq \|\sfV_0\| + C_0 \int_0^t \big(1 + \|\sfV_s\| \big) ds. 
\end{aligned}
\end{equation*}
By Gr\"onwall's inequality, this yields the estimate
\begin{equation*}   
\sup_{t \in [0, T_*)} \|\sfV_t\| \leq \big(C_0T_* + \|\sfV_0\|\big) e^{C_0T_*}.
\end{equation*}
Thus $\|\sfV_t\|$ is uniformly bounded for $t \in [0, T_*)$.
Therefore \eqref{e:pl_dichotomy} implies that $\sfV_t$ converges to the boundary of $\mSpace_\kap^+$ as $t \uparrow T_*$, that is
\begin{equation}
    \lim_{t\, \uparrow \,T_*}\Big\{ \inf_{\sfA \in \partial \mSpace_\kap^+}\|\sfV_t -\sfA\|\Big\} = 0.
    \label{e:PLconseq}
\end{equation}

We now show that $\sfV_t$ is bounded away from the boundary of $\mSpace_\kap^+$ for all $t \in [0, T_*)$. Since $\partial \mSpace_\kap^+ = \{ \sfA \in \mSpace_\kap: \det(\sfA) = 0\}$ is the set of matrices in $\mSpace_\kap$ whose determinant is zero, by the continuity of the determinant with respect to the Euclidean topology it will suffice to show that
\begin{equation}
    \sup_{t \in [0, T_*)} \frac{1}{\det(\sfV_t)} < \infty.
    \label{e:PDcontradiction}
\end{equation}
To this end, first note that by Jacobi's formula we have
\begin{equation*}
    \frac{d}{dt}\det(\sfV_t) = \det(\sfV_t)\text{tr}\bigg(\sfV_t^{-1} \frac{d}{dt}\sfV_t\bigg).
\end{equation*}
Since, as already argued above, $\sfV_t \in \mSpace_\kap^+$ is positive definite for $t \in [0, T_*)$, we have
\begin{equation*}
    \frac{d}{dt}\frac{1}{\det(\sfV_t)} = - \frac{1}{\det(\sfV_t)^2}\frac{d}{dt}\det(\sfV_t) = -\frac{1}{\det(\sfV_t)}\text{tr}\bigg(\sfV_t^{-1} \frac{d}{dt}\sfV_t\bigg), \quad t \in (0, T^*).
\end{equation*}
By \eqref{e:sF}, \eqref{e:riccati} and well-known cyclic properties of the trace, we have
\begin{equation*}
\begin{aligned}
\text{tr}\bigg(\sfV_t^{-1} \frac{d}{dt}\sfV_t\bigg) = \text{tr}\Big( 2 \sfV_t^{-1} - \sfV_t^{-1}\big(\sfL(\sfV_t)\sfV_t + \sfV_t \sfL(\sfV_t)^*\big)\Big) = 2 \text{tr}\big(\sfV_t^{-1}\big)    - 2 \text{tr}\big(\sfL(\sfV_t)\big).
\end{aligned}
\end{equation*}
The last two displays together imply
\begin{equation*}
    \frac{d}{dt}\frac{1}{\det(\sfV_t)} = \frac{2\text{tr}\big(\sfL(\sfV_t)\big)}{\det(\sfV_t)} - \frac{2\text{tr}\big(\sfV_t^{-1}\big)}{\det(\sfV_t)}.
\end{equation*}
As noted above $\sfV_t$ is positive definite for all $t \in [0, T_*)$, and thus we have $\text{tr}\big(\sfV_t^{-1}\big) \geq 0$ for all $t \in [0, T_*)$.
Recall the definition of $\ta$ from Definition \ref{d:CEop}. By the positive definiteness of $\sfV_t$ and Lemma \ref{l:sF_sL_estimates}  we have
\begin{equation*}
\begin{aligned}
    \sup_{t \in [0, T_*)}\big|\text{tr}\big(\sfL(\sfV_t)\big)\big| = \sup_{t \in [0, T_*)}\big|\alpha + (\kap - 1) \ta(\sfV_t)\big| < \infty.    
\end{aligned}
\end{equation*} Therefore there exists $C_1 \in (0, \infty)$ such that for all $t \in [0, T_*)$ we have
\begin{equation*}
\frac{d}{dt}\frac{1}{\det(\sfV_t)} \leq \frac{C_1}{\det(\sfV_t)}.
\end{equation*}
Applying Gr\"onwall's inequality to the previous display yields
\begin{equation*}
    \sup_{t \in [0, T_*)} \frac{1}{\det(\sfV_t)} \leq e^{C_1T_*} \frac{1}{|\sfV_0|}.
\end{equation*}
In particular, since it was assumed that $T_* < \infty$, we have 
\begin{equation*}
    \inf_{t \in [0, T_*)}\det(\sfV_t) \geq e^{-C_1T_*}|\sfV_0| > 0.
\end{equation*}
Thus \eqref{e:PDcontradiction} holds, which contradicts \eqref{e:PLconseq} and thus implies that $T_* = \infty$. 
\end{proof}

Finally, we show that the well-posedness of the Riccati equation implies the existence of a solution to the GMLFE.

\begin{proposition}[Existence of a solution to the $\kap$-GMLFE] 
\label{p:existence} Let $\alpha, \beta \in \R$ and suppose $\lambda \in \mPS_\kap$ is a centered non-degenerate Gaussian measure with covariance matrix $\Sigma_\lambda \in \mSpace_\kap^+$. Recall the definitions of $(\ta, \tb, \sfL)$ from Definition \ref{d:CEop} and the function $\sfF$ from \eqref{e:sF}. There is a unique solution $\{\Ric_t\}_{t \geq 0}$ to the Riccati equation \eqref{e:riccati}  with initial condition $\Ric_0 = \Sigma_\lambda$ such that $t \mapsto \Ric_t$ is continuous and $\Ric_t$ is positive definite for all $t \geq 0$. Furthermore, there exists a linear growth solution to the $\kap$-GMLFE $(\mlfe, \gamma, \X)$ with coefficients $(\alpha, \beta)$ and initial condition $\lambda$ such that $\X$ is a centered Gaussian process on $[0, \infty)$. Moreover, for every $t \geq 0$, $$\Sigma_{\mu_t} := \bbE[\X_t \otimes \X_t] = \Ric_t,$$ and $\X$ solves the SDE
\begin{equation}
    d\X(t) = - \sfL(\Ric_t) \X(t)+ \sqrt{2} d\bfB(t), \quad \X(0) \sim \lambda,
    \label{e:mlfeSol}
\end{equation}
and we have
\begin{equation}
    \gamma(t, x, y) := \ta(\Ric_t) x + \tb(\Ric_t) y, \quad (t, x, y) \in [0, \infty) \times \R \times \R.
    \label{e:gammaRic}
\end{equation}
\end{proposition}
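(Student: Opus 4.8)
The plan is to decouple the construction into a deterministic step and a stochastic step. Since the Riccati equation \eqref{e:riccati} does not reference the process $\X$, I would first invoke Lemma \ref{l:riccati_wp} to obtain the unique global solution $\{\Ric_t\}_{t \geq 0}$ with $\Ric_0 = \Sigma_\lambda$, recording that it is continuous and satisfies $\Ric_t \in \mSpace_\kap^+$ for all $t \geq 0$. With the trajectory $\{\Ric_t\}$ in hand, the coefficient $t \mapsto \sfL(\Ric_t)$ of the SDE \eqref{e:mlfeSol} is a \emph{deterministic}, continuous (hence locally bounded) matrix-valued function, because $\sfL$ is continuous on $\mSpace_\kap^+$ (the functions $f, g$ are continuous there by Lemma \ref{l:pdEstimate}, whence so are $\ta, \tb$). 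Consequently \eqref{e:mlfeSol} is a linear SDE with deterministic, locally bounded coefficients and Gaussian initial law, so it admits a unique strong solution $\X$ which is a centered Gaussian process and, being the solution of a time-inhomogeneous linear SDE with deterministic coefficients, is Markov with respect to its natural filtration. This supplies the structural objects required by items (1)--(3) of Definition \ref{d:mlfe}.

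The crux is the self-consistency identity $\Sigma_{\mlfe_t} = \Ric_t$, which closes the loop between the drift (built from $\Ric_t$) and the law of the process. Writing $P(t) := \bbE[\X_t \otimes \X_t]$, the standard covariance ODE for the linear SDE \eqref{e:mlfeSol} reads $\tfrac{d}{dt} P(t) = -\sfL(\Ric_t) P(t) - P(t)\sfL(\Ric_t)^* + 2\sfI$ with $P(0) = \Sigma_\lambda$. Comparing with \eqref{e:sF}--\eqref{e:riccati}, the trajectory $\Ric_t$ itself solves this very equation, since $\sfF(\Ric_t) = 2\sfI - \sfL(\Ric_t)\Ric_t - \Ric_t\sfL(\Ric_t)^*$; moreover, once $\{\Ric_t\}$ is regarded as fixed, this is a \emph{linear} (time-inhomogeneous) ODE in the unknown matrix. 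Since $P$ and $\Ric$ solve the same linear ODE with the same initial datum $\Sigma_\lambda = \Ric_0$, uniqueness forces $P(t) = \Ric_t$ for all $t$. In particular $\Sigma_{\mlfe_t} = \Ric_t \in \mSpace_\kap^+ \subset \mSpace_\kap$, so Lemma \ref{l:gSym} gives $\mlfe_t \in \mPS_\kap$ for every $t$, i.e.\ $\mlfe \in \mPS_\kap^\infty$, yielding item (4).

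Next I would verify items (5)--(6) by unpacking the matrix SDE \eqref{e:mlfeSol} into the componentwise system \eqref{e:mlfe}. Under the quadratic potentials \eqref{e:potentials2} the drift simplifies to $b(\x) = \alpha x_0 + \tfrac{\beta}{\kap}\sum_{v=1}^\kap x_v$, and a direct reading of the entries of $\sfL$ from \eqref{e:sL} shows that the $0$-th row of $\sfL(\Ric_t)$ produces exactly $\alpha X_0(t) + \tfrac{\beta}{\kap}\sum_{v} X_v(t)$, while each leaf row $v \neq 0$ produces $\ta(\Ric_t) X_v(t) + \tb(\Ric_t) X_0(t)$. Defining $\gamma$ by \eqref{e:gammaRic}, the leaf equations become precisely $dX_v = -\gamma(t, X_v(t), X_0(t))\,dt + \sqrt{2}\,dB_v$, matching \eqref{e:mlfe}. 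To confirm that this $\gamma$ is the correct conditional expectation \eqref{e:gamma}, I would apply \eqref{e:tCE} to the centered non-degenerate Gaussian measure $\theta = \mlfe_t$, whose covariance is $\Ric_t$ by the previous paragraph: this yields $\bbE[\, b(\X(t)) \mid X_0(t) = x, X_1(t) = y\,] = \ta(\Ric_t)x + \tb(\Ric_t)y = \gamma(t,x,y)$, giving item (5).

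Finally, the integrability \eqref{e:mlfeInt} and the linear-growth bound \eqref{e:linGrowth} follow from Lemma \ref{l:sF_sL_estimates}: since $\ta, \tb$ are uniformly bounded on $\mSpace_\kap^+$, one has $|\gamma(t,x,y)| \leq C(1 + |x| + |y|)$ uniformly in $t$, which is the linear-growth solution property, and combining this bound with the finiteness and continuity of the second moments of the Gaussian process $\X$ on $[0,T]$ yields \eqref{e:mlfeInt} almost surely. I expect the main obstacle to be the self-consistency step, namely recognizing that the Riccati equation for $\Ric_t$ is exactly the covariance ODE of the linear SDE it generates, so that uniqueness of the \emph{linear} ODE (rather than any fixed-point argument) identifies $\Sigma_{\mlfe_t}$ with $\Ric_t$; this identification is precisely what legitimizes invoking \eqref{e:tCE} to verify \eqref{e:gamma}.
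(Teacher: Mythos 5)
Your proposal is correct and follows essentially the same route as the paper's proof: solve the Riccati equation via Lemma \ref{l:riccati_wp}, treat \eqref{e:mlfeSol} as a linear SDE with deterministic bounded coefficients, and identify $\Sigma_{\mlfe_t}$ with $\Ric_t$ by observing that both solve the same linear time-inhomogeneous covariance ODE with the same initial datum, then verify the conditional-expectation and linear-growth conditions from \eqref{e:tCE} and Lemma \ref{l:sF_sL_estimates}. No substantive differences from the paper's argument.
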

\begin{proof} 
Lemma \ref{l:riccati_wp} ensures that there exists a (unique) solution $\{\sfV_t\}_{t\geq 0}$ to the Ricatti equation \eqref{e:riccati} and furthermore that $\sfV_t \in \mSpace_\kap^+$ for all $t \geq 0$. By Lemma \ref{l:sF_sL_estimates}, we have that $\sfL(\Ric_t)$ has entries that are uniformly bounded for $t \in [0, \infty)$. Hence by Theorem 5.2.9 and Proposition 5.2.13 of \cite{karatzas1991book}, there exists a unique strong solution to the SDE \eqref{e:mlfeSol}.

Moreover, since $\sfL(\sfV_t)$ is uniformly bounded for $t \in [0, \infty)$, by Problem 5.6.2 of \cite{karatzas1991book} we have that $\X = \{\X(t)\}_{t \geq 0}$ is a centered, continuous Gaussian process. For $t \in [0, \infty)$, let $\mlfe_t := \mL(\X(t))$. By Problem 5.6.1 of \cite{karatzas1991book}, we see that 
\begin{equation}
\frac{d}{dt} \Sigma_{\mlfe_t} = 2\sfI - \sfL(\Ric_t) \Sigma_{\mlfe_t} - \Sigma_{\mlfe_t} \sfL(\Ric_t)^*, \quad \Sigma_{\mlfe_0} = \Sigma_\lambda.
\label{e:sigRiccati}
\end{equation}
By the boundedness of $\sfL(\sfV_t)$, \eqref{e:sigRiccati} is a linear ODE on $\mSpace_\kap$ and by the Picard-Lindel\"of theorem, $\{\Sigma_{\mlfe_t}\}_{t \geq 0}$ is its unique solution. However, since $\{\Ric_t\}_{t \geq 0}$ satisfies \eqref{e:riccati}, by the definition of $\sfF$ in \eqref{e:sF}, $\{\sfV_t\}_{t \geq 0}$  is also a solution to \eqref{e:sigRiccati}. Thus we must have 
\begin{equation}
\Ric_t = \Sigma_{\mlfe_t}, \quad t \geq 0 .
\label{e:Ric=Sig}
\end{equation}

It only remains to show that $\X$ is a linear growth solution to the $\kap$-GMLFE. Let $\gamma$ be as in \eqref{e:gammaRic}, which by Lemma \ref{l:sF_sL_estimates} satisfies the linear growth condition \eqref{e:linGrowth}. Observe that \eqref{e:gammaRic}, \eqref{e:Ric=Sig}, \eqref{e:sL} of Remark \ref{r:conditonalDistributions}, and the fact that $\Sigma_{\mu_t} = \bbE\big[\X(t) \otimes \X(t)\big]$, together imply 
\[\gamma\big(t, X_0(t), X_1(t)\big) = \bbE\bigg[ \alpha X_0(t) + \frac{\beta}{\kap}\sum_{v =1}^\kap X_v(t) \bigg| X_0(t), X_1(t)\bigg], \quad t > 0. \]
This shows that \eqref{e:gamma} is satisfied with $(U, W)$ as defined in \eqref{e:potentials2}. Moreover, by \eqref{e:auxMat}, we have for all $\x = (x_0, x_1, \ldots, x_\kap) \in \R^{1 + \kap}$ that $(\sfQ \x)_0 = x_0 e_0$, $\sfP\x = \sum_{v = 1}^\kap x_ve_v$, and $\sfP^*\x = x_0\sum_{v= 1}^\kap e_v$, where $\{e_v\}_{v = 0}^\kap$ is the coordinate basis of $\R^{1 + \kap}$. Together with \eqref{e:sL} and \eqref{e:gammaRic}, this implies that for all $\x = (x_0, x_1, \ldots, x_\kap) \in \R^{1 + \kap}$
\begin{equation*}
    \Big(\sfL(\Ric_t)\x\Big)_v = \left \{ \begin{aligned} 
        &\alpha x_0 + \frac{\beta}{\kap}\sum_{v = 1}^\kap x_v, &\quad &v = 0, \\
        & \ta(\Ric_t) x_v + \tb(\Ric_t) x_0 = \gamma(t, x_v, x_0), &\quad & v \in \{1, \ldots, \kap\}.
    \end{aligned}\right.
\end{equation*}
Combining the above display with \eqref{e:mlfeSol} it follows that $\X(t)$ satisfies \eqref{e:mlfe}, again with $(U, W)$ given by \eqref{e:potentials2}. The integrability condition \eqref{e:mlfeInt} follows from the fact that $\gamma$ as defined in \eqref{e:gammaRic} is linear, and that $\X(t)$ is continuous almost surely. Thus we conclude that $(\mlfe, \gamma, \X)$ defines a linear growth solution to the $\kap$-GMLFE.
\end{proof}

\subsubsection{Uniqueness of the solution to the $\kap$-GMLFE}
\begin{proposition}[Uniqueness of a solution the $\kap$-GMLFE]
\label{p:uniqueness}  Let $\alpha, \beta \in \R$ and suppose $\lambda \in \mPS_\kap$ is a centered non-degenerate Gaussian measure. Then the tuple $(\mlfe, \gamma ,\X)$ described in Proposition \ref{p:existence} is the unique linear growth solution to the $\kap$-GMLFE with coefficients $(\alpha, \beta)$ and initial condition $\lambda$.
\end{proposition}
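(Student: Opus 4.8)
The plan is to fix the Gaussian solution $(\mlfe, \gamma, \X)$ produced in Proposition \ref{p:existence} — whose time-marginals $\mlfe_t = \mathcal{N}(0, \Ric_t)$ are non-degenerate with $t \mapsto \Ric_t$ continuous and whose drift $\gamma(t, x, y) = \ta(\Ric_t)x + \tb(\Ric_t)y$ is affine — and to show that any other linear growth solution $(\mlfe', \gamma')$ with the same initial condition $\lambda$ must coincide with it. The central difficulty is that $\gamma'$ is defined through the conditional expectation \eqref{e:gamma}, which is not continuous in the underlying law, so a direct Wasserstein/Lipschitz contraction is unavailable. Instead I would run a relative-entropy (Girsanov) argument, converting the mismatch between the two drifts into a Gr\"onwall inequality for the path-space relative entropy, and control the a priori intractable difference of conditional expectations using the weighted Pinsker inequality of Lemma \ref{l:pinsker} together with the conditioning-uniform exponential-integrability estimate of Lemma \ref{l:Lambda}.

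First I would apply Lemma \ref{l:entropyEstimate} to the two $\R^{1 + \kap}$-valued diffusions $\X'$ and $\X$. Under \eqref{e:potentials2} the root drift $b(\x) = \alpha x_0 + \frac{\beta}{\kap}\sum_{v=1}^\kap x_v$ from \eqref{e:b} is common to both systems, while the leaf drifts are $-\gamma'(t, x_v, x_0)$ and $-\gamma(t, x_v, x_0)$, so the two drift fields differ only in the leaf coordinates. Using leaf exchangeability and edge symmetry of $\mlfe'_t \in \mPS_\kap$ to collapse the sum over $v$, the lemma yields
\[
\mH\big(\mlfe'[T]\,\big|\,\mlfe[T]\big) = \frac{\kap}{2}\int_0^T \bbE^{\mlfe'_t}\Big[\big|\gamma'(t, Y_0, Y_1) - \gamma(t, Y_0, Y_1)\big|^2\Big]\, dt.
\]
Finiteness of the right-hand side, hence $\mlfe'[T] \ll \mlfe[T]$, follows from the linear growth of both drifts together with the uniform-in-$[0,T]$ second moment bounds for $\X'$; here I would also note that the Girsanov identity applies since the linear growth of $\gamma'$ holds $\bar\mlfe'_t$-almost everywhere, i.e.\ along the trajectories of $\X'$. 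Writing out $b$ and again using exchangeability, the $(Y_0, Y_1)$-measurable parts of $\gamma'(t, \cdot, \cdot)$ and $\gamma(t, \cdot, \cdot)$ agree, so their difference equals $\tfrac{\beta(\kap - 1)}{\kap}$ times the difference of conditional means $\bbE^{\mlfe'_t}[Y_2 \mid Y_0, Y_1] - \bbE^{\mlfe_t}[Y_2 \mid Y_0, Y_1]$, where by Lemma \ref{l:gSym} and Lemma \ref{l:condGauss} the latter is affine with a constant conditional variance $\sigma_t$.

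Next, for each fixed conditioning point $(x, y)$ I would apply the weighted Pinsker inequality to the conditional laws of $Y_2$, taking the reference measure to be the Gaussian conditional law $\mlfe_t(\cdot \mid x, y)$ and the test function $f(z) := z - \bbE^{\mlfe_t}[Y_2 \mid x, y]$. The key point is that the resulting exponential-moment factor is exactly the quantity evaluated in Lemma \ref{l:Lambda}, namely the constant $\Lambda^{\mlfe_t}_\eps = \tfrac{2}{\eps}\log\big(2\pi/(1 - \eps\sigma_t)\big)$, which is \emph{independent of} $(x,y)$; this uniformity is available precisely because the Gaussian reference has constant conditional variance. Hence
\[
\big|\bbE^{\mlfe'_t}[Y_2 \mid x, y] - \bbE^{\mlfe_t}[Y_2 \mid x, y]\big|^2 \leq \Lambda^{\mlfe_t}_\eps\, \mH\big(\mlfe'_t(\cdot\mid x, y)\,\big|\,\mlfe_t(\cdot\mid x, y)\big).
\]
Integrating this against the edge marginal $\bar\mlfe'_t$ and invoking the disintegration (chain rule) identity for relative entropy together with the data-processing inequality, the integrated conditional entropy is bounded above by the joint relative entropy $\mH(\mlfe'_t \mid \mlfe_t)$, which in turn is at most $\mH(\mlfe'[t] \mid \mlfe[t])$ by data processing.

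Combining the three displays gives, for a constant $C$ depending only on $(\kap, \beta)$,
\[
\mH\big(\mlfe'[T]\,\big|\,\mlfe[T]\big) \leq C \int_0^T \Lambda^{\mlfe_t}_\eps\, \mH\big(\mlfe'[t]\,\big|\,\mlfe[t]\big)\, dt.
\]
Since $t \mapsto \Ric_t$ is continuous and positive definite, $\sigma_t$ is continuous with $\sup_{t \in [0,T]}\sigma_t < \infty$, so choosing $\eps$ small enough keeps $\Lambda^{\mlfe_t}_\eps$ bounded on $[0,T]$. Writing $g(T) := \mH(\mlfe'[T] \mid \mlfe[T])$, which is non-decreasing and finite, Gr\"onwall's inequality forces $g \equiv 0$, i.e.\ $\mlfe'[T] = \mlfe[T]$ for every $T$, hence $\mlfe' = \mlfe$; uniqueness of the associated $\gamma'$ then follows from \eqref{e:gamma}. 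I expect the main obstacle to be exactly the step that tames the conditional-expectation nonlinearity: producing the conditioning-uniform exponential moment via Lemma \ref{l:Lambda} and threading the chain rule so that the pointwise Pinsker bounds assemble into a closed Gr\"onwall inequality in the single scalar $g$. The remaining technical care — finiteness of $g(T)$ and the validity of the Girsanov identity under only almost-everywhere linear growth — is comparatively routine.
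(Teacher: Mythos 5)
Your proposal is correct and follows essentially the same route as the paper's proof: Girsanov/relative entropy via Lemma \ref{l:entropyEstimate}, reduction of the drift mismatch to a difference of conditional means, the weighted Pinsker inequality of Lemma \ref{l:pinsker} with the conditioning-uniform constant from Lemma \ref{l:Lambda}, then the entropy chain rule, data processing, and Gr\"onwall. The only differences are immaterial constants (e.g.\ $\kap/2$ versus $\kap/4$ in the Girsanov identity, and dropping the additive $4\eps^{-1}$ from the Pinsker factor), which do not affect the argument.
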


\begin{proof} Let $(\mlfe, \gamma, \X)$ be the solution to the $\kap$-GMLFE described in Proposition \ref{p:existence}. Since $\{\X_t\}_{t \geq 0}$ is a centered continuous Gaussian process, by Lemma \ref{l:gSym} and the fact that $\Mat_\kap$ is a linear isomorphism, the map $t \mapsto (a_t, b_t, c_t) := \Mat_\kap^{-1}(\Sigma_{\mlfe_t})$ is continuous. Suppose $(\mlfe^\dag, \gamma^\dag, \X^\dag)$ is another linear growth solution to the $\kap$-GMLFE. We may always assume that $(\X, \X^\dag)$ are constructed on the same filtered probability space $(\Omega, \mF, \bbF, \bbP)$. By Definition \ref{d:mlfe}, we have $\mlfe, \mlfe^\dag \in \mPS_\kap^\infty$. We write $\tilde{\mlfe}, \tilde{\mlfe}^\dag \in \mPS_2^\infty$ for the marginal distributions of $(X_0, X_1, X_2)$ and $(X^\dag_0, X^\dag_1, X^\dag_2)$  respectively. Define
    \begin{equation}
    \begin{aligned}
        m_t(x, y) &:= \int_{\R} z \tilde{\mlfe}_t(z|x, y) dz = \bbE[X_{2}(t)|X_0(t) = x, X_1(t) = y], \\
        m_t^\dag(x, y) &:= \int_{\R} z \tilde{\mlfe}^\dag_t(z|x, y) dz = \bbE[X^\dag_{2}(t)|X^\dag_0(t) = x, X^\dag_1(t) = y],
        \label{e:condMeans}
    \end{aligned}
    \end{equation}
    where we have used the notation introduced in Remark \ref{r:conditonalDistributions} for conditional densities in $\mPS_\kap$. {By Lemma \ref{l:condGauss} and the fact that $\tilde{\mlfe}_t \in \mPS_2$ is a non-degenerate Gaussian measure for $t\in  [0, \infty)$, it follows that the conditional variance of $\tilde{\mlfe}_t$, denoted  $\sigma^{\tilde{\mlfe}_t}$, is a deterministic function of $t$.} Note that this also implies that the covariance of $\tilde{\mu}_t$, denoted $\Sigma_{\tilde{\mu}_t}$ satisfies $\Sigma_{\tilde{\mu}_t} = \Mat_2(a_t, b_t, c_t)$ for all $t \geq 0$. We emphasize here that we do not assume that $\mu^\dag$ is a Gaussian measure. Since $\tilde{\mu}_t \in \mPS_2$ is non-degenerate, we can apply Lemma \ref{l:condGauss} and Lemma \ref{l:pdEstimate} to obtain
    \begin{equation} 
    \begin{aligned}
    |\sigma^{\tilde{\mlfe}_t}| &= \bigg|a_t - b_t \bigg(\frac{a_tb_t - b_tc_t}{a_t^2 - b_t^2} \bigg) - c_t\bigg(\frac{a_tc_t - b_t^2}{a_t^2 -b_t^2}\bigg)\bigg| \leq |a_t| + 2|b_t| + |c_t|, \quad t \in [0, \infty).
    \end{aligned}
    \label{e:condVarEstimateUnique}
    \end{equation}
    Since $(\mlfe, \gamma, \X)$ and $(\mlfe^\dag, \gamma^\dag, \X)$ are both linear growth solutions, we can apply Lemma \ref{l:entropyEstimate} and \eqref{e:sym1} to conclude that for all $T \in (0, \infty)$, {the relative entropy, see \eqref{e:entropy-def}, between the two laws takes the form}
    \begin{equation}
    \label{e:entropyUnique}
    \begin{aligned}
        \mH(\mlfe^\dag[T]|\mlfe[T]) &= \frac{1}{4}\sum_{v = 1}^\kap \int_0^T \bbE\Big[\big|\gamma^\dag(t, X^\dag_v(t), X^\dag_0(t)) - \gamma(t, X^\dag_v(t), X^\dag_0(t))\big|^2 \Big] dt \\
    &= \frac{\beta^2(\kap - 1)^2}{4\kap} \int_0^T\bbE\Big[\big|m_t^\dag(X_1^\dag(t), X^\dag_0(t)) - m_t(X^\dag_1(t), X^\dag_0(t))\big|^2\Big] dt.
    \end{aligned}
    \end{equation}
    Let $h(z; x, y) := z - m_t(x, y)$. By \eqref{e:condMeans}, for all $t > 0$ we have
    \begin{equation*}
    \begin{aligned}
        m_t^\dag(x, y) - m_t(x, y) &= \bbE\big[h(X^\dag_2(t);x, y)\big|X^\dag_0(t) = x, X^\dag_1(t) = y \big] - \bbE\big[h(X(t);x, y)\big|X_0(t) = x, X_1(t) = y \big] \\
        &= \langle h(\cdot;x, y) , \tilde{\mlfe}^\dag_t(\cdot|x, y) - \tilde{\mlfe}_t(\cdot|x, y) \rangle.
    \end{aligned}
    \end{equation*}
    The above display, together with Lemma \ref{l:pinsker} and Lemma \ref{l:Lambda} applied with $\lambda = \tilde{\mu}_t$, yields the following inequality for all $\eps \in (0, \infty)$:
    \begin{equation}
         |m_t^\dag(x, y) - m_t(x, y)|^2 \leq \big[4\eps^{-1} + \Lambda_\eps^{\tilde{\mu}_t}(x, y)\big] \mH\big(\tilde{\mu}^\dag(\cdot|x, y)\big| \tilde{\mu}(\cdot|x, y)\big), \quad t > 0.
         \label{e:pinskerUnique}
    \end{equation}
    By \eqref{e:condVarEstimateUnique} and the continuity of $t \mapsto (a_t, b_t, c_t)$, there exists $\eps_T > 0$ such that $\sup_{t \in [0, T]}|\sigma^{\tilde{\mu}_t}| \leq \eps_T^{-1}$. Thus Lemma \ref{l:Lambda} implies that $\Lambda_{\eps_T}^{\tilde{\mu}_t}(x, y) = \Lambda_{\eps_T}^{\tilde{\mu}_t}$ is constant for all $t \in [0, T]$. Together with \eqref{e:entropyUnique}, \eqref{e:pinskerUnique}, chain rule for relative entropy, and the data-processing inequality, this implies
    \begin{equation*}
        \mH(\mlfe^\dag[T]|\mlfe[T]) \leq \frac{\beta^2(\kap - 1)^2}{4\kap}\big(4 \eps_T^{-1} + \Lambda_{\eps_T}^{\tilde{\mlfe}}\big)  \int_0^T \mH(\mlfe^\dag[t]|\mlfe[t]) dt.
    \end{equation*}
    By Gr\"onwall's inequality, we have $\mH(\mlfe^\dag[T]|\mlfe[T])$. Since this holds for all $T > 0$, it follows that $\mlfe^\dag = \mlfe$.
\end{proof}
\begin{remark}[A note about linear growth solutions] \normalfont
Uniqueness within the class of linear growth solutions can likely be extended to the class of all solutions to the $\kap$-GMLFE. The requirement of linear growth in the proof of Proposition \ref{p:uniqueness} is enforced to ensure that Lemma \ref{l:entropyEstimate} can be applied (in fact, if we changed this to Lemma 4.4 of \cite{MR4595391}, then we immediately have uniqueness in the class of solutions with locally bounded drift). Stronger \emph{a priori} control on the conditional expectation would be enough to extend to uniqueness in the class of all solutions.
\end{remark}

\section{Long-Time Behavior of the 2-GMLFE}

\subsection{Sparse free energy and the H-theorem}
\label{ss:Htheorem} Recall that $\x = (x_{-1}, x_0, x_1) \in \R^{3}$ denotes a vector. Let $\nabla$ denote the gradient on $\R^{3}$ and let $\partial_{x_v}$ for $v \in \{0,\pm 1\}$ denote the partial derivative with respect to $x_v$.

We start by applying the general results from  \cite{hu2024htheorem} to the present context. In the remainder of this section, let $U, W : \R \rightarrow \R$ be continuously differentiable functions.
Let $\mPS_2^{\text{ac}}$ be the set of probability measures in $\mPS_2$ that are absolutely continuous with respect to Lebesgue measure on $\R^{1 + \kap}$. In what follows, we identify measures $\nu \in \mPS_2^{\text{ac}}$ with their densities.
For $\nu \in \mPS_2^{\text{ac}}$, define the \emph{sparse free energy} as
\begin{equation}
    \bbH_2(\nu) := \int_{\R^3} \Big( \log \nu(\x) -  \log \bar{\nu}(x_0, x_1) + U(x_0) + \frac{1}{2}\sum_{v \in \{\pm 1\}}K(x_0 - x_v)\Big) \nu( \mathbf{x})d\x, 
    \label{e:sfe}
\end{equation}
and the \emph{modified Fisher information} as follows:
\begin{equation}
    \bbI_2(\nu) :=
        \int_{\R^3} \bigg[ \Big| U'(x_0) + \sum_{v \in \{\pm1\}} K'(x_0 - x_v) + \partial_{x_0} \log \nu(\x)\Big|^2 + 2 \bigg|\partial_{x_1} \log \frac{\nu(\x)}{\bar{\nu}(x_0, x_1)}\bigg|^2 \bigg]\nu(\x)d\x.
    \label{e:mfi}
\end{equation}
The functionals $\bbH_2$ and $\bbI_2$ correspond to the quantities defined in (4.2) and (4.3), respectively, of \cite{hu2024htheorem} with $\kap = 2$ and $d = 1$.

We next show that the limiting distribution $\pi$ from Definition \ref{d:r:stat} for the LFE is a zero of the modified Fisher information.

\begin{lemma}[Zeros of the modified Fisher information]
\label{l:I(pi)=0}
    Suppose $\alpha, \beta \in \R$ satisfy $\alpha > |\beta|$. Let  $\stat \in \mPS_2^{\text{ac}}$ be as defined in Definition \ref{d:r:stat} and $(U, W)$ be as in \eqref{e:potentials}. Then $\stat$ is the unique zero of $\bbI_2$.
\end{lemma}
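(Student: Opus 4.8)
The plan is to prove the two halves separately: that $\stat$ is a zero of $\bbI_2$ (a direct computation with the precision matrix), and that it is the only zero (the substantive part). Throughout I abbreviate $\delta := \sqrt{\alpha^2-\beta^2}$ and write the two nonnegative brackets in \eqref{e:mfi} as the \emph{drift term} and the \emph{conditional-independence term}.

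First I would verify $\bbI_2(\stat)=0$ pointwise. Since $\stat=\mN(0,\Sigma_\stat)$ is a non-degenerate Gaussian, $\log\stat(\x)=\mathrm{const}-\tfrac12\x^*\Sigma_\stat^{-1}\x$, so $\partial_{x_0}\log\stat(\x)=-(\Sigma_\stat^{-1}\x)_0$. Reading off the precision entries from \eqref{e:covStatInv} (using $\sigma_0^\stat\sigma_2^\stat=(\sigma_1^\stat)^2$) gives $(\Sigma_\stat^{-1})_{0,0}=\alpha$ and $(\Sigma_\stat^{-1})_{0,\pm1}=\beta/2$, hence $\partial_{x_0}\log\stat=-\alpha x_0-\tfrac\beta2(x_1+x_{-1})$. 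With $(U,W)$ as in \eqref{e:potentials} one computes $U'(x_0)+\sum_{v\in\{\pm1\}}W'(x_0-x_v)=\alpha x_0+\tfrac\beta2(x_1+x_{-1})$, so the drift term vanishes identically; the Markov-random-field identity \eqref{e:pi-mrf} gives $\partial_{x_1}\log(\stat/\bar\stat)=0$, so the conditional-independence term vanishes as well, and $\bbI_2(\stat)=0$.

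For uniqueness, let $\nu\in\mPS_2^{\text{ac}}$ with $\bbI_2(\nu)=0$. Both brackets are nonnegative and their $\nu$-integrals sum to zero, so each vanishes $\nu$-a.e.; after checking $\nu>0$ a.e. (the equations force $\log\nu$ to agree a.e. with a smooth function, which then has full support) this yields everywhere
\[
\partial_{x_0}\log\nu=-\alpha x_0-\tfrac\beta2(x_1+x_{-1}),\qquad \partial_{x_1}\partial_{x_{-1}}\log\nu=0,
\]
the second because $\partial_{x_1}\log\nu=\partial_{x_1}\log\bar\nu(x_0,x_1)$ is independent of $x_{-1}$. Integrating the first relation in $x_0$ and using the second together with leaf exchangeability \eqref{e:sym1} gives $\log\nu(\x)=-\tfrac\alpha2 x_0^2-\tfrac\beta2(x_1+x_{-1})x_0+k(x_1)+k(x_{-1})+\mathrm{const}$ for a single scalar function $k$. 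Writing $\psi:=e^{k}$ and $G(x_0):=\int_\R e^{-\frac\beta2 x_0 z}\psi(z)\,dz$, the edge marginal is $\bar\nu(x_0,x_1)\propto e^{-\frac\alpha2 x_0^2-\frac\beta2 x_0 x_1}\psi(x_1)G(x_0)$, and imposing edge symmetry \eqref{e:sym2} and separating variables produces the functional equation $\int_\R e^{-\frac\beta2 xz}\psi(z)\,dz=e^{c_0}e^{\frac\alpha2 x^2}\psi(x)$ for a constant $c_0$.

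The crux is to deduce from this equation that $\psi$ is Gaussian, i.e.\ $k$ is quadratic. Setting $\Phi:=e^{\frac\alpha4 x^2}\psi$ recasts it as the eigenvalue problem $\int_\R K(x,z)\Phi(z)\,dz=e^{c_0}\Phi(x)$ with the symmetric Gaussian kernel $K(x,z)=\exp(-\tfrac\alpha4(x^2+z^2)-\tfrac\beta2 xz)$; the hypothesis $\alpha>|\beta|$ makes the associated quadratic form positive definite, so $K$ is strictly positive, Hilbert–Schmidt and self-adjoint, and the normalization $\int\nu=1$ is exactly $\int\Phi^2=\int e^{\frac\alpha2 x^2}\psi^2<\infty$, so $\Phi\in L^2(\R)$ is a genuine positive eigenfunction. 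By the Jentzsch/Krein–Rutman theorem this positivity-improving operator has a simple leading eigenvalue with a strictly positive eigenfunction, and since the eigenfunctions of this Mehler-type operator are Hermite functions—only the ground state being positive—$\Phi$ must be that ground state, a Gaussian. Hence $k$ is a quadratic polynomial and $\nu$ is Gaussian; I expect this spectral/positivity argument (or, as an alternative, an entire-function argument via Marcinkiewicz's theorem) to be the main obstacle. Finally, once $\nu$ is Gaussian, $\nu\in\mPS_2$ forces $\Sigma_\nu\in\mSpace_2$ by Lemma \ref{l:gSym}; its precision is leaf-exchange symmetric with $0$-row $(\tfrac\beta2,\alpha,\tfrac\beta2)$ and $(\Sigma_\nu^{-1})_{1,-1}=0$ by the two pointwise relations, leaving only the leaf-diagonal entry $p$ free. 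The requirement that $\Sigma_\nu$ have equal diagonal entries then forces $p^2-\alpha p+\tfrac{\beta^2}4=0$, positive-definiteness selects $p=\tfrac{\alpha+\delta}2$, and the same functional equation kills the linear part of $k$ (so $\nu$ is centered). This precision matrix coincides with $\Sigma_\stat^{-1}$ in \eqref{e:covStatInv}, whence $\nu=\stat$.
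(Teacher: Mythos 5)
Your proposal is correct in substance but takes a genuinely different route from the paper on the half that matters. For the verification that $\bbI_2(\stat)=0$, the paper integrates: it expands the first bracket of \eqref{e:mfi} into $\int|b|^2\,d\stat+2\int b\,\partial_{x_0}\log\stat\,d\stat+\int|\partial_{x_0}\log\stat|^2\,d\stat$ and shows these Gaussian moments equal $\alpha$, $-\alpha$, $\alpha$; you instead read off $(\Sigma_\stat^{-1})_{0,0}=\alpha$ and $(\Sigma_\stat^{-1})_{0,\pm1}=\beta/2$ from \eqref{e:covStatInv} and conclude the integrand vanishes pointwise --- a slightly stronger, cleaner variant of the same computation, with the second bracket killed by \eqref{e:pi-mrf} in both cases. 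The real divergence is uniqueness: the paper does not prove it within this lemma at all, but imports it from Theorem 4.17 of \cite{hu2024htheorem} (see the proof of Theorem \ref{t:ogHthm}, where the hypotheses of that result are checked via Remark 4.6 and Lemma 8.5 there). You give a self-contained argument instead: vanishing of the two brackets forces $\log\nu(\x)=-\tfrac{\alpha}{2}x_0^2-\tfrac{\beta}{2}x_0(x_1+x_{-1})+k(x_1)+k(x_{-1})$, edge symmetry turns marginalization into the eigenvalue equation $\int K(x,z)\Phi(z)\,dz=e^{c_0}\Phi(x)$ for the positive Hilbert--Schmidt Gaussian kernel $K(x,z)=\exp(-\tfrac{\alpha}{4}(x^2+z^2)-\tfrac{\beta}{2}xz)$, and Jentzsch/Krein--Rutman together with the explicit positive eigenfunction $e^{-\delta x^2/4}$ identifies $\Phi$ as the ground state, recovering exactly the precision matrix in \eqref{e:covStatInv} (I checked the endgame: the leaf precision entry satisfies $p^2-\alpha p+\beta^2/4=0$ and positive definiteness selects $p=(\alpha+\delta)/2$, which matches $(\Sigma_\stat^{-1})_{1,1}$). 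What your route buys is independence from the companion paper and an explicit mechanism for why any zero must be Gaussian; what it costs is the regularity bookkeeping you flag only in passing --- one must justify that a zero of $\bbI_2$ in $\mPS_2^{\text{ac}}$ has $\nu>0$ with $\log\nu$ weakly differentiable, that the $\nu$-a.e.\ identities integrate to the stated closed form, and that Fubini applies when computing $\bar\nu$; these are precisely the points the cited Theorem 4.17 is designed to absorb. Modulo those standard but nontrivial details, your argument is a valid and more informative alternative.
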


\begin{proof}   By \eqref{e:mfi} and \eqref{e:potentials}, we have that 
    \begin{equation}
        \bbI_2(\pi) = \int_{\R^3} \Big| \alpha x_0 + \frac{\beta}{2}(x_1 + x_{-1}) + \partial_{x_0} \log \stat(\x)\Big|^2\stat(\x)d\x + \int_{\R^3}2 \Big|\partial_{x_1} \log \frac{\stat(\x)}{\bar{\stat}( x_0, x_1)}\Big|^2 \stat(\x)d\x.
        \label{e:bbI=0}
    \end{equation}
    To prove the lemma, it suffices to show that the right-hand side of \eqref{e:bbI=0} is equal to zero.
    By {\eqref{e:pi-mrf} we have}
    \begin{equation}
\bigg|\partial_{x_1} \log\frac{\stat(\x)}{\bar{\pi}(x_0, x_1)} \bigg|^2 = \big|\partial_{x_1} \log\stat(x_{-1}|x_0, x_1)\big|^2 = 0.
\label{e:0statMFI}
    \end{equation}
        Next, we show that the first integral on the right-hand side of \eqref{e:bbI=0} is equal to zero. Let $\mathbf{w} = (w_{-1}, w_0, w_1)$ with $w_0 = \alpha$ and $w_i = \beta/2$ for $i \in \{\pm 1\}$, and define $b(\x) := \langle \mathbf{w}, \x \rangle$. We can rewrite the first integral in \eqref{e:bbI=0} as 
    \begin{equation}
    \begin{aligned}
        \int_{\R^3} \Big| b(\x) + \partial_{x_0} \log \stat(\x)\Big|^2 \stat(\x)d\x 
        = \int_{\R^3}\Big[  |b(\x)|^2 + 2 b(\x) \partial_{x_0} \log \stat(\x)
        + |\partial_{x_0} \log \stat(\x)|^2\Big] \pi(d\x).
    \end{aligned}
    \label{e:1statMFI}
    \end{equation}
    As usual let $\Y = (Y_{-1}, Y_0, Y_1) \sim \pi$. Since $\stat$ is a centered Gaussian measure, we have
    \begin{equation*}
        \int_{\R^3} |b(\x)|^2 \stat(\x)d\x = \bbE^\pi\big[ |\langle \mathbf{w}, \mathbf{Y} \rangle|^2] = \langle \mathbf{w}, \Sigma_\stat \mathbf{w} \rangle.
    \end{equation*}
    Recall  the helpful notation $\delta := \sqrt{\alpha^2 - \beta^2}$ and $\phi := - \beta/(\alpha + \delta)$ from Remark \ref{r:piNonDegen}. By \eqref{e:SigmaStat} we have
    \begin{equation*}
        \langle \mathbf{w}, \Sigma_\stat \mathbf{w} \rangle = \frac{1}{\delta}\bigg[ \alpha^2 + 2 \alpha \beta \phi + \frac{\beta^2}{2}(1 + \phi^2) \bigg] = \frac{1}{\delta}\bigg[ (\alpha + \beta \phi)^2 + \frac{\beta^2}{2}(1 - \phi^2) \bigg].
    \end{equation*}
    However, a simple calculation shows that
        $\alpha + \beta\phi = \delta$ and $1 - \phi^2 = \frac{2\delta}{\alpha + \delta},$
    and therefore
    \begin{equation*}
        \frac{1}{\delta}\bigg[ (\alpha + \beta \phi)^2 + \frac{\beta^2}{2}(1 - \phi^2) \bigg] = \delta + \frac{\beta^2}{\alpha + \delta} = \alpha.
    \end{equation*}
    The previous three displays together yield
    \begin{equation}
        \int_{\R^3} |b(\x)|^2 \stat(\x)d\x = \alpha.
        \label{e:11statMFI}
    \end{equation}
    Similarly, define $\mathbf{u} = (u_{-1}, u_0, u_1)$ where for $i \in \{0, \pm 1\}$ we have $$u_i = -(\Sigma_\stat^{-1})_{0i} = -\frac{1}{\det(\Sigma_\pi)}\left \{\begin{aligned}&(\sigma_0^\pi)^2 - (\sigma_2^\pi)^2, &\quad & i = 0, \\
    &\sigma^\pi_1(\sigma_2^\pi - \sigma_0^\pi), &\quad & i \in \{\pm 1\},
    \end{aligned} \right.$$ by \eqref{e:covStatInv}. Since $\mathbf{u}$ is the $0$-th column of $\Sigma_{\pi}^{-1}$, we clearly have $(\Sigma_\pi \mathbf{u})_i = -\delta_{i0}$ for $i \in \{0, \pm 1\}$ and thus
    \begin{equation}
        \int_{\R^3}b(\x) \partial_{x_0} \log \stat(\x) \stat(\x)d\x = \bbE^\stat \big[ \langle \mathbf{w}, \Y \rangle \langle \mathbf{u}, \Y \rangle] = \langle \mathbf{w}, \Sigma_\stat \mathbf{u} \rangle = -\alpha.
        \label{e:13statMFI}
    \end{equation}
Similarly, we have    
    \begin{equation*}
         \int_{\R^3} |\partial_{x_0} \log \stat(\x)|^2 \stat(\x)d\x = \bbE^\stat\big[|\langle \mathbf{u}, \Y \rangle|^2] = \langle \mathbf{u}, \Sigma_\pi \mathbf{u} \rangle = (\Sigma_\pi^{-1})_{00}.
    \end{equation*}
    Then \eqref{e:SigmaStat} implies that
    \begin{equation*}
        (\Sigma_\pi^{-1})_{00} = \frac{\sigma_{0}^\pi + \sigma_2^\pi}{(\sigma_0^\pi)^2 - (\sigma_1^\pi)^2} = \delta \frac{1 + \phi^2}{1 - \phi^2} = \alpha,
    \end{equation*}
    and by combining the previous two displays we have
    \begin{equation}
         \int_{\R^3} |\partial_{x_0} \log \stat(\x)|^2 \stat(\x)d\x =  \alpha.
        \label{e:12statMFI}
    \end{equation}
    Substituting \eqref{e:11statMFI}, \eqref{e:13statMFI}, and \eqref{e:12statMFI} into \eqref{e:1statMFI}, it follows that the right-hand side of \eqref{e:1statMFI} is zero. Together with \eqref{e:0statMFI} and \eqref{e:bbI=0}, this implies that $\bbI_2(\stat) =0$. \end{proof}

We now recall the key result of \cite{hu2024htheorem}, specialized to the case of the quadratic potentials \eqref{e:potentials}. 

\begin{theorem}[H-theorem for the 2-GMLFE] 
\label{t:ogHthm}
Let $\alpha, \beta \in \R$ satisfy $\alpha > |\beta|$. Suppose $U, W:\R \rightarrow \R$ are of the form \eqref{e:potentials}. Let $\lambda \in \mPS_2^{\text{\normalfont ac}}$ satisfy
\begin{equation}
    \bigg| \int_{\R} \lambda(x) \log \lambda(x) dx \bigg| + \int_{\R}|x|^2\lambda(x) dx < \infty.
    \label{e:lambdaEntVar}
\end{equation}
If $(\mu, \gamma)$ is the linear growth solution to the 2-GMLFE on $[0, \infty)$ with coefficients $(\alpha, \beta)$ and initial condition $\lambda$ (in the sense of Definition \ref{d:mlfe} and Definition \ref{d:linGrowth}), then the following identity holds for almost every $ 0 < r < t < \infty$:
\begin{equation*}
    \bbH_2(\mu_t) - \bbH_2(\mu_r) = - \int_r^t \bbI_2(\mu_s) ds,
\end{equation*}
where $\bbH_2$ and $\bbI_2$ are defined in \eqref{e:sfe} and \eqref{e:mfi}, respectively. Moreover, $\pi$ is the unique zero of $\bbI_2$ in $\mPS_2^{\text{\normalfont ac}}$, and there exist $c, C \in (0, \infty)$ such that we have
        \begin{equation}
            \mH(\mu_t|\pi) - \mH(\bar{\mu}_t|\bar{\pi}) = \bbH_2(\mu_t) - \bbH_2(\nu) \leq C e^{-ct}, \quad \text{ for a.e. }t \geq 0.
            \label{e:sfeDecay}
        \end{equation}
\end{theorem}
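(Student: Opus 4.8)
The plan is to obtain Theorem \ref{t:ogHthm} as the specialization, to the quadratic potentials \eqref{e:potentials}, of the general H-theorem and long-time analysis of \cite{hu2024htheorem} (their Theorem~4.1), so the work reduces to checking that our setting meets the hypotheses of that framework and then extracting the exponential rate from the Gaussian structure. The one genuine obstruction to a verbatim citation is that \cite{hu2024htheorem} proved well-posedness of the $\kap$-MLFE only for bounded $\nabla W$, whereas \eqref{e:potentials} gives the unbounded $W'(x) = -\tfrac{\beta}{2}x$. However, the entropy-dissipation identity there is derived from well-posedness alone, and its proof (an entropy method for the conditional McKean--Vlasov flow) applies unchanged once a unique solution is available. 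I would therefore invoke Theorem \ref{t:r:wp} to supply the unique linear growth solution $(\mu,\gamma)$, note that the potentials are $C^1$, and check that the initial law $\lambda$ satisfies the finite-entropy and finite-second-moment condition \eqref{e:lambdaEntVar} (automatic for a non-degenerate centered Gaussian). This yields the dissipation identity $\bbH_2(\mu_t)-\bbH_2(\mu_r)=-\int_r^t \bbI_2(\mu_s)\,ds$ for a.e.\ $0<r<t<\infty$, with $\bbH_2,\bbI_2$ as in \eqref{e:sfe}--\eqref{e:mfi}.

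The unique-zero assertion is immediate: Lemma \ref{l:I(pi)=0} already identifies $\stat$ as the unique zero of $\bbI_2$ in $\mPS_2^{\text{ac}}$, the hypothesis $\alpha>|\beta|$ ensuring via Remark \ref{r:piNonDegen} that $\stat$ of Definition \ref{d:r:stat} is well-defined and non-degenerate. Since $\bbH_2(\nu)=\mH(\nu|\stat)-\mH(\bar{\nu}|\bar{\stat})$ by \eqref{e:sfe'}, we have $\bbH_2(\stat)=0$, so the equality $\mH(\mu_t|\stat)-\mH(\bar{\mu}_t|\bar{\stat})=\bbH_2(\mu_t)-\bbH_2(\stat)$ in \eqref{e:sfeDecay} is just the definition (the stray $\bbH_2(\nu)$ there should read $\bbH_2(\stat)$). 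Writing $\Delta(t):=\bbH_2(\mu_t)-\bbH_2(\stat)$, the chain rule for relative entropy gives $\Delta(t)=\int \mH\big(\mu_t(\cdot|x_0,x_1)\,\big|\,\stat(\cdot|x_0,x_1)\big)\,\bar{\mu}_t(dx_0,dx_1)\ge 0$, while the dissipation identity shows $\Delta$ is non-increasing with $\tfrac{d}{dt}\Delta(t)=-\bbI_2(\mu_t)\le 0$; hence $\Delta$ decreases to a finite limit and $\bbI_2(\mu_t)\to 0$ along a sequence, which by the unique-zero property forces $\mu_t\to\stat$.

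The main obstacle is the exponential rate, and here I would exploit the Gaussian structure directly. By Theorem \ref{t:r:wp}, $\mu_t=\mN(0,\Ric_t)$ with $\Ric_t\in\mSpace_2^+$ solving $\tfrac{d}{dt}\Ric_t=\sfF(\Ric_t)$ (the Riccati equation \eqref{e:riccati}), and $\Sigma_\stat\in\mSpace_2^+$ is the unique positive-definite equilibrium $\sfF(\Sigma_\stat)=0$ (consistent with Lemma \ref{l:I(pi)=0}, since for this gradient-like flow the equilibria of \eqref{e:riccati} coincide with the zeros of the dissipation $\bbI_2$; uniqueness for $\kap=2$ is established in \cite{hu2024htheorem}). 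The plan is to show $\Ric_t\to\Sigma_\stat$ \emph{exponentially}: global convergence follows from the Lyapunov descent of the previous paragraph together with uniqueness of the equilibrium (a LaSalle-type argument), and the rate comes from verifying that the linearization $D\sfF(\Sigma_\stat)$, as a linear map on $\mSpace_2\cong\R^3$, is Hurwitz, using the explicit form \eqref{e:sF} of $\sfF$, the bounds of Lemma \ref{l:sF_sL_estimates}, and the non-degeneracy of $\Sigma_\stat$. Given $\|\Ric_t-\Sigma_\stat\|\le Ce^{-ct}$, the bound \eqref{e:sfeDecay} follows since $\Delta$ is a smooth (hence locally Lipschitz) function of $\Ric_t$ near the non-degenerate $\Sigma_\stat$ with $\Delta=0$ there, so $\Delta(t)\le L\|\Ric_t-\Sigma_\stat\|\le Ce^{-ct}$. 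The crux is precisely the strict negativity of the spectrum of $D\sfF(\Sigma_\stat)$, where the hypothesis $\alpha>|\beta|$ does the real work; in the framework of \cite{hu2024htheorem} this is exactly the content of the $\kap=2$ long-time analysis, which could alternatively be cited directly once $\stat$ is identified as the equilibrium.
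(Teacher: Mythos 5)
Your proposal follows the paper's own strategy for the first two assertions: the dissipation identity is obtained by specializing Theorem~4.1 of \cite{hu2024htheorem} after checking its hypotheses (the paper does this by invoking Remark~4.6 of \cite{hu2024htheorem} for Assumptions A, B$^*$, C, and by supplying well-posedness through Theorem~\ref{t:r:wp}, exactly as you argue), and the identification of $\stat$ as the unique zero of $\bbI_2$ rests on Lemma~\ref{l:I(pi)=0}. One caveat here: the proof of Lemma~\ref{l:I(pi)=0} only establishes $\bbI_2(\stat)=0$; the \emph{uniqueness} of the zero is not free but comes from Theorem~4.17 of \cite{hu2024htheorem}, whose Assumption~D the paper verifies via Lemma~8.5 of that reference --- your proposal leans on the lemma's statement and skips this step. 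Where you genuinely diverge is the exponential decay \eqref{e:sfeDecay}: the paper simply cites Theorem~4.18 of \cite{hu2024htheorem}, whereas you sketch a self-contained Gaussian route via the Riccati flow $\tfrac{d}{dt}\Ric_t=\sfF(\Ric_t)$, linearization at $\Sigma_\stat$, and local Lipschitz continuity of $\bbH_2$ in the covariance. That route is attractive because it would make the rate explicit, but as written it has two unclosed steps: the LaSalle argument needs precompactness of $\{\Ric_t\}$ inside $\mSpace_2^+$ (boundedness \emph{and} uniform non-degeneracy, which Lemma~\ref{l:riccati_wp} gives only on finite horizons), and the Hurwitz property of $D\sfF(\Sigma_\stat)$ --- which you correctly identify as the crux where $\alpha>|\beta|$ enters --- is asserted rather than computed. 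Since you explicitly retain the direct citation of the $\kap=2$ long-time analysis of \cite{hu2024htheorem} as a fallback, the overall argument does go through, but the alternative route should not be presented as complete without those two verifications. Your observation that the $\bbH_2(\nu)$ in \eqref{e:sfeDecay} should read $\bbH_2(\stat)$ is correct.
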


\begin{proof}
The first assertion follows from Theorem 4.1 of \cite{hu2024htheorem} on observing that Remark 4.6 of guarantees that $(U, W)$ satisfies Assumptions A, B$^*$, and C of \cite{hu2024htheorem}. Next, note that Lemma 8.5 of \cite{hu2024htheorem} implies that Assumption D of \cite{hu2024htheorem} is satisfied. Therefore by Theorem 4.17 of \cite{hu2024htheorem} there is a unique zero of $\mathbb{I}$, which by Lemma \ref{l:I(pi)=0} must be $\pi$. The last assertion \eqref{e:sfeDecay} follows from Theorem 4.18 of \cite{hu2024htheorem}.
\end{proof}
{The} last result of this section collects some important properties of $\pi$ that arise as a consequence of the results of \cite{hu2024htheorem}.
\begin{theorem}[Remark 4.9, Theorem 4.10, and Proposition 7.2 of \cite{hu2024htheorem}]
\label{t:ogPi}
Let $\alpha, \beta \in \R$ satisfy $\alpha > |\beta|$. Suppose $U, W:\R \rightarrow \R$ are of the form \eqref{e:potentials}. Then for $\pi$ as in Definition \ref{d:r:stat}, and $\Y = (Y_{-1}, Y_0, Y_1) \sim \pi$ there exists $C \in (0, \infty)$ such that
\begin{equation*}
\big|\bbE^\pi\big[W'(Y_0 - Y_v)| Y_0 = x]\big| \leq C(1 + |x|), \quad v \in \{\pm1\}, \quad  x \in \R.
\end{equation*}
Moreover, we have
\begin{equation}
    \partial_{x_v} \log \stat(\x) = \left \{ \begin{aligned}
        & - U'(x_0) - \sum_{v \in \pm 1} W'(x_0 - x_v), &\quad& v = 0, \\
        & - U'(x_v) -  W'(x_v - x_0) - \bbE^\stat[W'(Y_0 - Y_1)|Y_0 = x_v], &\quad& v \in \{\pm1\}.
    \end{aligned} \right.
    \label{e:mlfe:nablaPi}
\end{equation}
for all $\x \in \R^3$. In particular, $\nabla \log \pi(\x)$ satisfies a linear growth condition. \end{theorem}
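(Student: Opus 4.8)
The plan is to exploit the fact that $\pi=\mathcal{N}(0,\Sigma_\pi)$ is an explicit, non-degenerate centered Gaussian, so that $\log\pi$ is a quadratic form and both assertions reduce to linear algebra with the precision matrix $\Sigma_\pi^{-1}$, whose entries are recorded in \eqref{e:covStatInv}. The potentials \eqref{e:potentials} give the affine derivatives $U'(x)=(\alpha+\beta)x$ and $W'(x)=-\tfrac{\beta}{2}x$, so every conditional expectation in the statement is affine in its conditioning variable; this is precisely what trivializes, in the Gaussian case, the regularity estimates that are delicate in the general setting of \cite{hu2024htheorem}.

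First I would dispatch the linear-growth bound. Since $(Y_0,Y_v)$ is centered jointly Gaussian with $\mathrm{Cov}(Y_0,Y_v)=\sigma_1^\pi$ and $\mathrm{Var}(Y_0)=\sigma_0^\pi$, the standard Gaussian conditioning formula gives $\bbE^\pi[Y_v\mid Y_0=x]=(\sigma_1^\pi/\sigma_0^\pi)x$, whence
\[
\bbE^\pi\big[W'(Y_0-Y_v)\mid Y_0=x\big]=-\frac{\beta}{2}\Big(1-\frac{\sigma_1^\pi}{\sigma_0^\pi}\Big)x,
\]
which is linear in $x$ and hence bounded by $C(1+|x|)$ with $C$ depending only on $(\alpha,\beta)$ through $\sigma_0^\pi,\sigma_1^\pi$. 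This also records the affine conditional expectation needed later.

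Next I would establish the gradient formula via $\partial_{x_v}\log\pi(\x)=-(\Sigma_\pi^{-1}\x)_v$. For $v=0$, the computation in the proof of Lemma \ref{l:I(pi)=0} already yields $(\Sigma_\pi^{-1})_{00}=\alpha$, and a short simplification of \eqref{e:covStatInv} using the parametrization $\sigma_r^\pi=\phi^r/\delta$ from Remark \ref{r:piNonDegen} gives $(\Sigma_\pi^{-1})_{0,\pm1}=\beta/2$; substituting into $-(\Sigma_\pi^{-1}\x)_0=-\alpha x_0-\tfrac{\beta}{2}(x_1+x_{-1})$ and expanding $-U'(x_0)-\sum_{v\in\{\pm1\}}W'(x_0-x_v)$ shows the two agree. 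For $v=\pm1$ the key structural input is the conditional-independence relation $(\Sigma_\pi^{-1})_{1,-1}=0$ of Remark \ref{r:piNonDegen} (equivalently \eqref{e:pi-mrf}), which removes the $x_{-1}$-term and matches the fact that the right-hand side of the claimed formula depends only on $(x_0,x_1)$. It then remains to match coefficients: the $x_0$-coefficient equals $-\beta/2=-(\Sigma_\pi^{-1})_{10}$ on both sides, while equating the $x_1$-coefficients reduces, after inserting the affine conditional expectation from the previous step, to the identity $(\Sigma_\pi^{-1})_{11}=\alpha+\tfrac{\beta}{2}(\sigma_1^\pi/\sigma_0^\pi)$. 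Using $\delta=\sqrt{\alpha^2-\beta^2}$, $\phi=-\beta/(\alpha+\delta)$, and the relations $1-\phi^2=2\delta/(\alpha+\delta)$ and the value of $\det\Sigma_\pi$ from Remark \ref{r:piNonDegen}, both sides simplify to $(\alpha+\delta)/2$, closing the verification. The final linear-growth claim for $\nabla\log\pi$ is then immediate, since each component is affine (indeed linear) in $\x$.

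I expect the only genuine work to be this algebraic coefficient-matching in the $v=\pm1$ case; everything else follows directly from Gaussianity and the explicit entries in \eqref{e:covStatInv}. An alternative, more economical route would be to invoke Remark 4.9, Theorem 4.10, and Proposition 7.2 of \cite{hu2024htheorem}, whose hypotheses (Assumptions A, B$^*$, C, D of \cite{hu2024htheorem}) were already verified for $(U,W)$ as in \eqref{e:potentials} in the proof of Theorem \ref{t:ogHthm}; the self-contained Gaussian computation above, however, is transparent and pins down the constant $C$ explicitly.
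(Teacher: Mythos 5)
Your proposal is correct, and it takes a genuinely different route from the paper. The paper's proof is essentially a citation argument: the first assertion is obtained by combining $\bbI_2(\stat)=0$ (Lemma \ref{l:I(pi)=0}), the verification of Assumption C of \cite{hu2024htheorem}, and the MRF property of $\stat$ from Remark \ref{r:piNonDegen}; the gradient formula \eqref{e:mlfe:nablaPi} is then read off from the factorization of $\stat$ given by Remark 4.10 and Theorem 4.11 of \cite{hu2024htheorem} together with the expression for $\tfrac{d}{dx}\log\stat_0$ from Proposition 7.2 of \cite{hu2024htheorem}. You instead compute everything directly from the explicit Gaussian form of $\stat$: the conditional expectation $\bbE^\stat[W'(Y_0-Y_v)\mid Y_0=x]=-\tfrac{\beta}{2}\bigl(1-\sigma_1^\stat/\sigma_0^\stat\bigr)x$ is exactly linear, and $\partial_{x_v}\log\stat(\x)=-(\Sigma_\stat^{-1}\x)_v$ reduces \eqref{e:mlfe:nablaPi} to the identities $(\Sigma_\stat^{-1})_{00}=\alpha$, $(\Sigma_\stat^{-1})_{0,\pm1}=\beta/2$, $(\Sigma_\stat^{-1})_{1,-1}=0$, and $(\Sigma_\stat^{-1})_{11}=\alpha+\tfrac{\beta}{2}\phi=(\alpha+\delta)/2$, all of which I have checked against \eqref{e:covStatInv} and the parametrization $\sigma_r^\stat=\phi^r/\delta$. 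What your route buys is a self-contained verification with an explicit constant $C$ that does not depend on importing the factorization of the cavity fixed point from \cite{hu2024htheorem} (nor on the implicit identification of $\stat$ with that fixed point); what the paper's route buys is brevity and consistency with the general non-Gaussian framework, where no explicit precision matrix is available. You correctly flag the citation-based argument as the alternative, so there is no gap; if you write this up, it would be worth stating explicitly that the $v=\pm1$ case of \eqref{e:mlfe:nablaPi} depends only on $(x_0,x_v)$ precisely because $(\Sigma_\stat^{-1})_{1,-1}=0$, which is the content of \eqref{e:pi-mrf}.
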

\begin{proof}

By Lemma \ref{l:I(pi)=0}, we have $\bbI_2(\stat) = 0$. By Remark 4.6 of \cite{hu2024htheorem}, Assumption C of \cite{hu2024htheorem} is satisfied, and by Remark \ref{r:piNonDegen}, $\pi$ is a MRF. The first assertion follows from combining these two observations.

To show the second assertion, notice that by Remark 4.10 and Theorem 4.11 of \cite{hu2024htheorem} we have
    \begin{equation*}
        \stat(\x) = \exp \bigg( - U(x_0) - \sum_{v = \pm 1} \intPot(x_0 - x_v) \bigg) \prod_{v = \pm 1} e^{-\frac{1}{2}U(x_v)}\stat_0(x_v)^{ \frac{1}{2}}, \quad \x \in \R^3.
    \end{equation*}
Moreover, by Proposition 7.2 of \cite{hu2024htheorem}, we have
\begin{equation*}
     \frac{d}{dx}\big(\log \stat_0(x)\big) = - U'(x) - 2 \bbE^{\bar{\stat}}[W'(Y_0 - Y_1)|Y_0 = x], \quad x \in \R.
\end{equation*}
The claim \eqref{e:mlfe:nablaPi} follows immediately from the two previous displays.     
\end{proof}

\subsection{Exponential convergence of the 2-MLFE}\label{ss:LongMLFE}  The results of \cite{hu2024htheorem}, reproduced in Theorem \ref{t:ogHthm}, establish exponential decay of the sparse free energy; however, it is not immediately clear that this implies a corresponding rate of convergence of the 2-GMLFE in any other topology. We first show that the convergence result of Theorem \ref{t:ogHthm} can be strengthened in the present setting.

\begin{proposition}[Entropic convergence] \label{p:qual_entropic_conv} Fix $\alpha, \beta \in \R$ with $\alpha > |\beta|$ and let $\lambda$ be a non-degenerate centered Gaussian in $\mPS_2$. Also, let $\mlfe$ be the unique linear growth solution to the 2-GMLFE on $[0, \infty)$ with coefficients $(\alpha, \beta)$ and initial condition $\lambda$. 
Then we have 
\begin{equation}
    \lim_{t \rightarrow \infty} \mH(\mlfe_t|\stat) = 0.
    \label{e:entropyConv}
\end{equation}
\end{proposition}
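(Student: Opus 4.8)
The plan is to reduce the statement to convergence of the Gaussian covariance and then prove that convergence by a LaSalle-type argument driven by the sparse free energy. By Theorem~\ref{t:r:wp} the solution satisfies $\mlfe_t = \mathcal{N}(0,\Sigma_{\mlfe_t})$, while $\stat = \mathcal{N}(0,\Sigma_\stat)$ is centered and nondegenerate (Remark~\ref{r:piNonDegen}); the closed-form expression for the relative entropy between centered Gaussians then shows that $\mH(\mlfe_t\,|\,\stat)$ is a continuous function of $\Sigma_{\mlfe_t}$ that vanishes exactly at $\Sigma_\stat$. Hence it suffices to prove that $\Sigma_{\mlfe_t}\to\Sigma_\stat$ as $t\to\infty$. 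I note in passing that the sparse free energy alone does not suffice here: although a nondegenerate centered Gaussian has finite differential entropy and second moment, so that \eqref{e:lambdaEntVar} holds and Theorem~\ref{t:ogHthm} yields $\bbH_2(\mlfe_t)\to 0$, this quantity controls only the conditional law of $X_{-1}$ given $(X_0,X_1)$ and says nothing about the edge marginal, so the convergence of the full covariance must be extracted from the dynamics.

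By Proposition~\ref{p:existence}, $t\mapsto\Sigma_{\mlfe_t}=\Mat_2(a_t,b_t,c_t)$ is the global solution of the autonomous ODE $\dot{\sfV}=\sfF(\sfV)$ on the open cone $\mSpace_2^+$. I would analyze this flow through the continuous Lyapunov function $\Sigma\mapsto\bbH_2(\mathcal{N}(0,\Sigma))$, which by Theorem~\ref{t:ogHthm} is nonincreasing along the flow with dissipation rate $\bbI_2$. Granting precompactness of the trajectory in $\mSpace_2^+$ (treated below), its $\omega$-limit set $\Omega$ is nonempty, compact and flow-invariant, and $\bbH_2$ is constant on $\Omega$ since $\bbH_2(\mlfe_t)$ converges and $\bbH_2$ is continuous in $\Sigma$. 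Applying the H-theorem identity of Theorem~\ref{t:ogHthm} to the orbit through each point of $\Omega$ then forces $\bbI_2$ to vanish identically on $\Omega$; since $\stat$ is the unique zero of $\bbI_2$ in $\mPS_2^{\mathrm{ac}}$ (Lemma~\ref{l:I(pi)=0} and Theorem~\ref{t:ogHthm}), we obtain $\Omega=\{\Sigma_\stat\}$, and a precompact trajectory with singleton $\omega$-limit set converges, giving $\Sigma_{\mlfe_t}\to\Sigma_\stat$.

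The crux is therefore the precompactness of $\{\Sigma_{\mlfe_t}\}_{t\geq 0}$ inside $\mSpace_2^+$, i.e.\ a uniform upper bound together with a uniform lower bound on the determinant. For the upper bound, a direct computation from \eqref{e:sF} and \eqref{e:sL} yields the diagonal equation $\dot a_t = 2 - 2(\alpha a_t + \beta b_t)$; since $\Sigma_{\mlfe_t}\in\mSpace_2^+$ forces $a_t > \max\{|b_t|,|c_t|\}$ (Lemma~\ref{l:pdEstimate}), the standing hypothesis $\alpha>|\beta|$ gives $\dot a_t \leq 2 - 2(\alpha-|\beta|)a_t$, so $a_t$---and with it all of $\Sigma_{\mlfe_t}$---stays uniformly bounded. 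For the lower bound I would revisit the determinant computation from the proof of Lemma~\ref{l:riccati_wp},
\[
\frac{d}{dt}\frac{1}{\det(\Sigma_{\mlfe_t})} = \frac{2\,\operatorname{tr}\sfL(\Sigma_{\mlfe_t}) - 2\,\operatorname{tr}(\Sigma_{\mlfe_t}^{-1})}{\det(\Sigma_{\mlfe_t})},
\]
and combine the uniform bound on $\operatorname{tr}\sfL$ (Lemma~\ref{l:sF_sL_estimates}) with the elementary inequality $\operatorname{tr}(\Sigma^{-1})\geq 3\,\det(\Sigma)^{-1/3}$ (AM--GM on the eigenvalues) to deduce that $u_t:=\det(\Sigma_{\mlfe_t})^{-1}$ satisfies $\dot u_t \leq C u_t - c\,u_t^{4/3}$; this differential inequality keeps $u_t$ bounded, so $\det(\Sigma_{\mlfe_t})$ stays bounded away from $0$.

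I expect the main effort to be concentrated in this precompactness step and in the careful application of the invariance principle, precisely because the sparse free energy governs only the conditional structure and therefore cannot detect the edge marginal directly. Once $\Sigma_{\mlfe_t}\to\Sigma_\stat$ is in hand, continuity of the Gaussian relative entropy at the nondegenerate matrix $\Sigma_\stat$ yields $\mH(\mlfe_t\,|\,\stat)\to 0$, which is \eqref{e:entropyConv}.
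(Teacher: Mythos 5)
Your argument is correct, but it takes a genuinely different route from the paper's. The paper's proof is short: it invokes Theorem \ref{t:ogHthm} (which imports the weak convergence $\mlfe_t \Rightarrow \stat$ from the convergence results of \cite{hu2024htheorem}, not merely the decay of $\bbH_2$), notes that weak convergence of centered Gaussians gives $\Sigma_{\mlfe_t}\to\Sigma_\stat$ entrywise, deduces $\inf_{t\geq 0}\det(\Sigma_{\mlfe_t})>0$ from this convergence together with continuity and non-degeneracy of $\Sigma_{\mlfe_t}$ on compact time intervals (Theorem \ref{t:r:wp}), and concludes via the explicit Gaussian relative-entropy formula. You instead re-derive the covariance convergence from scratch by a LaSalle invariance argument on the autonomous Riccati flow of Proposition \ref{p:existence}, using only the dissipation identity and the uniqueness of the zero of $\bbI_2$ (Lemma \ref{l:I(pi)=0}), and you supply the needed precompactness by explicit differential inequalities: the bound $\dot a_t \leq 2-2(\alpha-|\beta|)a_t$ via Lemma \ref{l:pdEstimate}, and the bound $\dot u_t \leq Cu_t - cu_t^{4/3}$ for $u_t=\det(\Sigma_{\mlfe_t})^{-1}$ via the determinant computation from Lemma \ref{l:riccati_wp} and AM--GM. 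Both of these estimates check out, and the invariance-principle step is sound (one should verify, as you implicitly do, that $\bbH_2$ and $\bbI_2$ restricted to Gaussians are continuous functions of the covariance on $\mSpace_2^+$, which follows from the explicit formulas). What your approach buys is self-containedness in the Gaussian setting --- your remark that the decay of the sparse free energy \eqref{e:sfeDecay} alone controls only the conditional structure and cannot by itself yield convergence of the edge marginal is apt, and your proof does not lean on the external weak-convergence theorem --- at the cost of considerably more work; the paper's proof is shorter precisely because it accepts that weak convergence as an input. The final step (continuity of the Gaussian relative entropy at the non-degenerate $\Sigma_\stat$) is identical in both.
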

\begin{proof}
    By Lemma \ref{l:I(pi)=0}, $\stat$ is the unique zero of $\bbI_2$. Then Theorem \ref{t:ogHthm} implies that $\mlfe_t \implies \stat$ as $t \rightarrow \infty$. By Definition \ref{d:r:stat} and Theorem \ref{t:r:wp}, $\stat$ and $\{\mlfe_t\}_{t \geq 0}$ are Gaussian measures, and therefore the weak convergence further implies that $\Sigma_{\mlfe_t} \rightarrow \Sigma_\stat$ entrywise as $t \rightarrow \infty$. Note that $\det(\Sigma_\stat) > 0$ by Remark \ref{r:piNonDegen}. Then by the continuity of the determinant, there exists $\eps > 0$ small enough and $t_\eps \in (0, \infty)$ such that we have $0 < \det(\Sigma_\stat) - \eps \leq  \det(\Sigma_{\mlfe_t}) \leq \det(\Sigma_\stat) + \eps$ for all $t > t_\eps$. Moreover by Theorem \ref{t:r:wp}, $\Sigma_{\mlfe_t}$ is continuous and non-degenerate on $[0, t_\eps]$ and hence $\det(\Sigma_t)$ is uniformly bounded away from $0$ and $\infty$ on $[0, t_\eps]$. Thus, $\inf_{t \geq 0} \det(\Sigma_{\mlfe_t}) > 0$.

    By the formula for the relative entropy of Gaussian measures (e.g., see Lemma A.5 of \cite{boursier2023cutoff}), we have
    \begin{equation*}
         \mH(\mlfe_t|\stat) = \frac{1}{2}\Big( \text{tr}(\Sigma_\stat^{-1}\Sigma_{\mlfe_t})  - 3 + \log \det(\Sigma_\stat) - \log \det(\Sigma_{\mlfe_t})\Big)
    \end{equation*}
    Since $\Sigma_{\mu_t} \rightarrow \Sigma_\pi$ as $t \rightarrow \infty$, by the continuity of matrix inversion, trace, and determinant with respect to the Euclidean topology (on any set of uniformly non-degenerate matrices) we have that the right-hand side of the above display converges to $0$ as $t \rightarrow \infty$. Thus, the advertised relative entropy convergence \eqref{e:entropyConv} holds.
\end{proof}

The stronger entropic convergence of Proposition \ref{p:qual_entropic_conv} is only qualitative and insufficient to obtain rates of convergence. In the following, we show how the sparse free energy can be used to strengthen further this result to exponential convergence in the total variation metric.
 
\begin{proof}[Proof of Theorem \ref{t:r:mlfeConv}]
The proof proceeds by controlling the total variation distance in the following way:\begin{equation}
    d_{\text{TV}}(\mlfe_t, \stat) \leq d_{\text{TV}}(\tilde{\stat}_{t/2}, \stat) +  d_{\text{TV}}(\tilde{\stat}_{t/2}, \mlfe_t),
    \label{e:triangleTV}
\end{equation}
where $\tilde{\stat}_{t/2}$ will be defined as the time-marginal of an appropriately chosen Langevin diffusion. The first term on the right-hand side of \eqref{e:triangleTV} will be estimated using standard results from the theory of Markov processes, while the second term will be controlled by the decay of the sparse free energy in \eqref{e:sfeDecay}.

Fix $t > 0$. First, we define $\tilde{\stat}_{t/2}$. Let $\tilde{\X}^\stat = (\tilde{X}^\stat_{1}, \tilde{X}^\stat_{0}, \tilde{X}^\stat_{-1})$ be the Langevin diffusion associated to $\stat$ that is given by
\begin{equation}
\begin{aligned}
    d\tilde{\X}_s^\stat &= \nabla \log \stat(\tilde{\X}_s^\stat) ds + \sqrt{2} d\bfB_s, \quad s > 0, \\
\end{aligned}
\label{e:pi_langevin}
\end{equation}
with initial condition $$   \mL(\tilde{\X}_0^\stat) = \mlfe_{t/2}.$$ By Theorem \ref{t:ogPi}, $\nabla \log \pi$ is of linear growth, which by Proposition 5.3.6 and 5.3.10 of \cite{karatzas1991book} implies that \eqref{e:pi_langevin} has a unique in law weak solution. We define the measure $$\tilde{\stat}_{t/2} := \mL(\tilde{\X}_{t/2}^\stat) \in \mP(\R^3).$$    

Next, we show exponential convergence of $\tilde{\stat}_{t/2}$ to $\stat$. Since $\stat$ is a centered non-degenerate Gaussian measure, Corollary 5.7.2 of \cite{bakry2014book} implies that $\stat$ satisfies a log-Sobolev inequality. Therefore by Theorem 5.2.1 of \cite{bakry2014book} there exists $c_0 \in (0, \infty)$ such that
\begin{equation*}
    \mH(\tilde{\stat}_{t/2}|\stat) \leq e^{-c_0t}\mH(\tilde{\stat}_0|\stat) =e^{-c_0t}\mH(\mlfe_{t/2}|\stat).
\end{equation*}
Moreover, \eqref{e:entropyConv} and the fact that $\lambda$ is a centered, non-degenerate Gaussian measure implies that $ \sup_{t \geq 0} \mH(\mlfe_t|\stat) < C_0$ for some $C_0 \in (0, \infty)$.
Pinsker's inequality then shows that 
\begin{equation}
    d_{\text{TV}}(\tilde{\stat}_{t/2}, \stat)^2 \leq \frac{1}{2} \mH(\tilde{\stat}_{t/2}| \stat) \leq C_0e^{-c_0t}.
    \label{e:decay1}
\end{equation}

Finally, we show that $\mlfe_t$ and $\tilde{\stat}_{t/2}$ are close in relative entropy. By Theorem \ref{t:r:wp}, $\mlfe_{t/2}$ is a centered non-degenerate Gaussian. Since the 2-GMLFE is a Markov process, by uniqueness of the solution to the 2-GMLFE (see Theorem \ref{t:r:wp}) it follows that $\mlfe_{s +t/2} = \tilde{\mlfe}_s$ for all $s \geq 0$, where $\tilde{\mlfe} = \{\tilde{\mlfe}_s\}_{s \geq 0}$ is the unique linear growth solution to the 2-GMLFE starting from $\mlfe_{t/2}$. Another application of Pinsker's inequality then shows that
\begin{equation}
    d_{\text{TV}}(\mlfe_{t}, \tilde{\stat}_{t/2})^2 = d_{\text{TV}}(\tilde{\mlfe}_{t/2}, \tilde{\stat}_{t/2})^2 \leq \frac{1}{2} \mH(\tilde{\mlfe}_{t/2} | \tilde{\stat}_{t/2}).
    \label{e:mlfePinsker1}
\end{equation}
As noted above, $\nabla \log \stat$ satisfies a linear growth condition. Recall the definition of the trajectorial laws $\tilde{\mu}[t/2]$ and $\tilde{\stat}[t/2]$ from Section \ref{s:notation}. Since $(\mlfe, \gamma, \X)$ is a linear growth solution, we can apply the data-processing inequality with Lemma \ref{l:entropyEstimate}, together with \eqref{e:mlfe:nablaPi}, \eqref{e:mlfe}, and the fact that $\mlfe \in \mPS_2^\infty$, to obtain the following estimate:
\begin{equation*}
\begin{aligned}
    \mH(\tilde{\mlfe}_{t/2} | \tilde{\stat}_{t/2}) &\leq \mH\Big(\, \tilde{\mlfe}\Big[\frac{t}{2}\Big] \,\Big|\, \tilde{\stat}\Big[\frac{t}{2}\Big] \,\Big) \\
    & \leq \frac{\beta^2}{8}\int_{t/2}^t \bbE\Big[\big|\bbE\big[X_{-1}(s)\big|X_0(s), X_1(s) \big] - \bbE^\stat\big[Y_{-1}\big|Y_0 = X_0(s)\big]\big|^2 \Big]ds,\\
    & = \frac{\beta^2}{8}\int_{t/2}^t \bbE\Big[\big|\bbE\big[X_{-1}(s)\big|X_0(s), X_1(s) \big] - \bbE^\stat\big[Y_{-1}\big|Y_0 = X_0(s), Y_1 = X_1(s)\big]\big|^2 \Big]ds,
\end{aligned}
\end{equation*}
where the last equality follows {from \eqref{e:pi-mrf}}. Let $m^\stat(x, y) := \bbE^\stat\big[Y_{-1}\big|Y_0 = x, Y_1 = y\big]$ and $h(\x) = x_{-1}$. By Lemma \ref{l:pinsker} and \eqref{e:wp:Lambda1}, for any $\eps > 0$  we have
\begin{equation*}
    \begin{aligned}
        \big|\bbE\big[X_{-1}(s)\big|X_0(s), X_1(s) \big] &- \bbE^\stat\big[Y_{-1}\big|Y_0 = X_0(s), Y_1 = X_1(s)\big]\big|^2 \\
        = &\big|\big\langle h(\cdot) - m^\stat\big(X_0(s), X_1(s)\big), \mlfe_s\big(\cdot|X_0(s), X_1(s)\big) - \stat\big(\cdot|X_0(s), X_1(s)\big) \big\rangle \big|^2 \\
        \leq & \Lambda_{\eps}^\stat\big(X_0(s), X_1(s)\big)  \mH\Big( \mlfe_s\big(\cdot|X_0(s), X_1(s)\big) \Big| \stat\big(\cdot|X_0(s), X_1(s)\big)\Big).
    \end{aligned}
\end{equation*}
By Lemma \ref{l:condGauss} and Definition \ref{d:r:stat}, the conditional variance of $\stat$ is bounded. Then by Lemma \ref{l:Lambda} there exists $\eps > 0$ such that $\Lambda_\eps^\stat(x, y)$ is constant and finite for all $x, y \in \R$. By the chain rule for relative entropy and the last two displays, we conclude there must exist a constant $\hat{C} \in (0, \infty)$ such that
\begin{equation*}
    \mH(\tilde{\mlfe}_{t/2} | \tilde{\stat}_{t/2}) \leq \hat{C} \int_{t/2}^t \big[\mH(\mlfe_s|\stat) - \mH(\bar{\mlfe}_s|\bar{\stat}) \big] ds.
\end{equation*}
Substituting \eqref{e:sfeDecay} into the above display and integrating, it is clear that there exist $c_1, C_1 \in (0, \infty)$ such that
\begin{equation*}
    \mH(\tilde{\mlfe}_{t/2} | \tilde{\stat}_{t/2}) \leq C_1e^{-ct}.
\end{equation*}
Together with \eqref{e:mlfePinsker1}, this implies that
\begin{equation}
       d_{\text{TV}}(\mlfe_{t}, \tilde{\stat}_{t/2})^2 \leq \frac{1}{2}\mH(\tilde{\mlfe}_{t/2} | \tilde{\stat}_{t/2}) \leq \frac{1}{2} C_1e^{-c_1t}.
        \label{e:decay2}
\end{equation}
We now combine \eqref{e:triangleTV}, \eqref{e:decay1}, and \eqref{e:decay2} to conclude the existence of $c, C \in (0, \infty)$ such that \eqref{e:convMLFE} holds.
\end{proof}

\section*{Acknowledgements}
K. Hu was supported by the National Science Foundation on the RTG grant NSF 
DMS-2038039 and K. Ramanan was supported by the National Science Foundation 
on the grant NSF DMS-2246838.  Both authors were also supported by the Office of Naval Research on the Vannevar Bush Faculty Fellowship grant ONR-N0014-21-1-2887, and
 would like to acknowledge support from the National Science Foundation under Grant No. DMS-1928930, since this paper was brought to completion when both authors were in residence at the Simons Laufer Mathematical Sciences Institute in Berkeley, California, during February 2025.
 
\appendix
\section{Proof of Lemma \ref{l:gSym}}\label{ap:ref_l:gsym}
\begin{proof}[Proof of Lemma \ref{l:gSym}]
    First, suppose that $\lambda \in \mPS_\kap$ is a centered Gaussian measure. We check that $\{0, \ldots, \kap\} \ni i \mapsto\Sigma^\lambda_{i,i}$ is constant. Since $\lambda$ satisfies the edge symmetry condition \eqref{e:sym2}, we have  \begin{equation*}
        \Sigma^\lambda_{i,i} = \bbE[X_i^2] = \bbE[X_j^2] = \Sigma^\lambda_{j,j}, \quad i, j \in \{0, \ldots, \kap\}.
    \end{equation*}
    Next, we check that $\Sigma^\lambda_{i,0} = \Sigma^\lambda_{0,i}$ is constant in $i \in \{1, \ldots, \kap\}$. Fix $i, j \in \{1, \ldots, \kap\}$ and pick a permutation $\tau$ on $\{1, \ldots, \kap\}$ such that $\tau(i) =j$. Then by the leaf exchangeability property \eqref{e:sym1} of $\lambda$, 
    \begin{equation*}
        \Sigma^\lambda_{i,0} = \bbE[X_iX_0] = \bbE[X_{\tau(i)} X_0] = \bbE[X_j, X_0] = \Sigma^\lambda_{j0}.
    \end{equation*}
    By \eqref{e:Mat}-\eqref{e:AltMat}, to conclude the first assertion it only remains to check that $\Sigma^\lambda_{i_1, j_1} = \Sigma^\lambda_{i_2, j_2}$ for all $(i_1, j_1)$ and $(i_2, j_2)$ in $\{(i, j): i \neq j, i \neq 0, j \neq 0\}$. We can find a permutation $\tau'$ on $\{1, \ldots, \kap\}$ such that $\tau'(i_1) = i_2$ and $\tau'(j_1) = j_2$. Then once again by leaf exchangeability \eqref{e:sym1}, 
    \begin{equation}
        \Sigma^\lambda_{i_1,j_1} = \bbE[X_{i_1}X_{j_1}] = \bbE[X_{\tau'(i_1)}X_{\tau'(j_1)}] = \bbE[X_{i_2}X_{j_2}] = \Sigma^\lambda_{i_2,j_2}.
    \end{equation}
    Therefore, we pick $a = \Sigma^\lambda_{0,0}$, $b = \Sigma^\lambda_{0,1}$, and $c = \Sigma^\lambda_{1,2}$ to see that $\Sigma^\lambda = \Mat_\kap(a, b, c)$ and conclude $\Sigma^\lambda \in \mSpace_\kap$.

    Now let $(a, b, c) \in \R^3$, suppose $\Sigma = \Mat_{\kap}(a, b, c)$ is positive semi-definite, and let $\lambda \in \mathcal{N}(0, \Sigma)$. Let $\tau$ be a permutation on $\{1, \ldots, \kap\}$. Then we have for all $i, j \in \{1, \ldots, \kap\}$ that $\Sigma_{i,i} = \Sigma_{\tau(i),\tau(i)} = a$, $\Sigma_{i,0} = \Sigma_{\tau(i),0} = b$, and $\Sigma_{i,j} = \Sigma_{\tau(i),\tau(j)} = c$. Therefore, $\lambda$ satisfies \eqref{e:sym1}. By \eqref{e:sym2}, it suffices to check that $\bar{\lambda}(x_0, x_1) = \bar{\lambda}(x_1, x_0)$. However, notice that $(X_0, X_1) \sim \mathcal{N}(0, \bar{\Sigma})$, where $\bar{\Sigma}_{00} = \bar{\Sigma}_{11} = a$ and $\bar{\Sigma}_{01} = \bar{\Sigma}_{10} = b$, which implies $(X_0, X_1) \stackrel{d}{=} (X_1, X_0)$. This shows $\lambda \in \mPS_\kap$.
\end{proof}

\section{Proof of Lemma \ref{l:pdEstimate} and Lemma \ref{l:condGauss}}
\label{ap:pdEstimate_condGauss}
\begin{proof}[Proof of Lemma \ref{l:pdEstimate}]
We collect a few facts about $\sfA  = \Mat_\kap(a, b, c)$. Since $\sfA$ is positive definite, the sub-matrices
\begin{equation*}
    R_1 := \begin{bmatrix}
        a & b \\
        b & a
    \end{bmatrix}, \,    R_2 := \begin{bmatrix}
        a & c \\
        c & a
    \end{bmatrix}, R_3 := \begin{bmatrix}
        a & b & b \\
        b & a & c \\
        b & c & a        
    \end{bmatrix}
\end{equation*}
are also positive definite and have positive determinants. This implies that $a^2 - b^2 = \det(R_1) > 0$ and $a^2 -c^2 = \det(R_2) > 0$, which together imply \eqref{e:0gaussLemma}. Moreover, the three eigenvalues of $R_3$ are
        $a - c$ and  $a + \frac{c}{2} \pm \frac{1}{2} \sqrt{8b^2 + c^2}$. Since $a - c > 0$ and $\det R_3 > 0$, it follows that the product of the last two eigenvalues is positive. In other words,\begin{equation}
    a^2 + ac - 2b^2 > 0.
        \label{e:keyPDineq}
\end{equation}
\indent We now show that both inequalities \eqref{e:matrixIneq1} and \eqref{e:matrixIneq2} are consequences of \eqref{e:0gaussLemma} and \eqref{e:keyPDineq} . We prove \eqref{e:matrixIneq1} first. Since $a^2 - b^2 > 0$ by \eqref{e:0gaussLemma}, by \eqref{e:keyPDineq} we have 
\begin{equation*}
    \frac{ac - b^2}{a^2 - b^2} > -1.
\end{equation*}
Additionally, since $a > |c|$ by \eqref{e:0gaussLemma}, we also have \begin{equation*}
    \frac{ac - b^2}{a^2 - b^2} \leq \frac{a|c| - b^2}{a^2 - b^2} < 1.
\end{equation*}
The last two displays together prove \eqref{e:matrixIneq1}. To prove \eqref{e:matrixIneq2}, first notice that \eqref{e:keyPDineq} is equivalent to \begin{equation}
    |b| \leq \sqrt{\frac{a^2 + ac}{2}}.
    \label{e:1gaussLemma}
\end{equation}
Next, we claim that \eqref{e:matrixIneq2} is implied by \begin{equation}
|b| \leq \frac{\sqrt{16a^2 + (a-c)^2} - (a-c)}{4},
\label{e:2gaussLemma}
\end{equation}
and hence, also by
\begin{equation}
\label{e:3gaussLemma}
    b^2 \leq \frac{1}{16}\Big( 16 a^2 + 2(a - c)^2 - 2(a-c) \sqrt{16a^2 + (a -c)^2}\Big).
\end{equation}
To see why the claim holds, note that since $a > c$, \eqref{e:2gaussLemma} implies 
\begin{equation}
\label{e:4gaussLemma}
    |b|(a-c) \leq \frac{1}{4}\Big( (a-c)\sqrt{16a^2 + (a-c)^2} - (a-c)^2\Big).
\end{equation}
When combined with \eqref{e:3gaussLemma}, this yields
\begin{equation*}
    2b^2 + |b|(a-c) \leq 2a^2,
\end{equation*}
which when rearranged yields \eqref{e:matrixIneq2}.
Let $\omega := a - c > 0$. We see from \eqref{e:1gaussLemma} and the fact that \eqref{e:3gaussLemma} implies \eqref{e:matrixIneq2}, it suffices to show  that
\begin{equation}
\label{e:keyPDineq2}
    a^2 - \frac{a\omega}{2} = \frac{a^2 + ac}{2} \leq \frac{16a^2 + 2\omega^2 - 2\omega\sqrt{16a^2 + \omega^2}}{16}.
\end{equation}
Since $\omega > 0$, \eqref{e:keyPDineq2} is equivalent to $\sqrt{16a^2 + \omega^2} \leq \omega + 4a$, which holds trivially since $a, \omega > 0$. This proves \eqref{e:keyPDineq2} and hence \eqref{e:matrixIneq2}.
\end{proof}

\begin{proof}[Proof of Lemma \ref{l:condGauss}]
    By Lemma \ref{l:gSym}, there exists $(a, b, c) \in \R$ with $a > 0$ such that $\lambda \sim \mathcal{N}(0, \Mat_2(a, b, c))$. Moreover, since $\lambda$ is non-degenerate $\Mat_2(a, b, c)$ is positive definite and we can apply Lemma \ref{l:pdEstimate} with $\kap = 2$ to deduce that $a > \max\{|b|, |c|\}$. Then \eqref{e:gaussConditionals} follows from the definition of $\Mat_2$ in \eqref{e:Mat} and well-known formulas for the conditional expectation and variance of Gaussian measures (e.g., see Appendix A.2 of \cite{rasmussen2006gaussian}). 
\end{proof}

\section{Proof of Lemma \ref{l:sF_sL_estimates}}
\label{ap:sF_sL_estimates}
\begin{proof}[Proof of Lemma \ref{l:sF_sL_estimates}]
Lemma \ref{l:pdEstimate}, \eqref{e:fg}, and \eqref{e:t} together imply that $\ta$ and $\tb$ are uniformly bounded on $\mSpace_\kap^+$. By \eqref{e:sL}, the boundedness of $\sfL$ on $\mSpace_\kap^+$ follows directly from the boundedness of $\ta$ and $\tb$.

Next, we show that $\sfF$ maps $\mSpace_\kap$ to $\mSpace_\kap$. This follows from computing the following matrix product:
    \begin{equation*}
2 \sfI - \sfL(\sfA) \sfA - \sfA \sfL(\sfA)^*, \quad \sfA \in \mSpace_\kap.
    \end{equation*}
     Let $\sfA \in \mSpace_\kap$ and $(a, b, c) := \Mat_\kap^{-1}(\sfA)$. By \eqref{e:Mat2}, we have
\begin{equation}
\sfA = a \sfI + b \sfJ + c \sfK.
    \label{e:Rictr}
\end{equation}
Recall the definitions of $\sfQ, \sfP, \sfJ$, and $\sfK$ from Definition \ref{d:aMatrices}. We note here that we have the following relations:
    \begin{equation}
    \label{e:mRelations}
    \begin{aligned}
        &\sfQ \sfJ = \sfJ (\sfI  - \sfQ) = \sfP, &\quad &\sfQ \sfK = \sfK\sfQ = \sfP^* \sfK = \sfK\sfP =0,  &\quad &\sfJ\sfP = \sfP^* \sfJ = \sfK + (\sfI  - \sfQ), \\
        &\sfP \sfJ = \sfJ\sfP^* = \kappa \sfQ, &\quad &(\sfP\sfK)^* = \sfK\sfP^* = (\kap - 1)\sfP^*,  &\quad &(\sfI  - \sfQ) \sfK = \sfK(\sfI  - \sfQ) = \sfK.        
    \end{aligned}
    \end{equation}
Full verification of \eqref{e:mRelations} can be found below. Combining the relations \eqref{e:mRelations} with \eqref{e:sL} yields
    \begin{equation*}
    \begin{aligned}
        \sfL(\sfA)\sfJ &= \beta \sfQ + \frac{\beta}{\kap}(\sfI  - \sfQ) +  \tilde{\alpha}(\sfA) \sfP + \alpha \sfP^* + \frac{\beta}{\kap} \sfK, \\
        \sfL(\sfA)\sfK &= \frac{\beta(\kap - 1)}{\kap} \sfP + \tilde{\alpha}(\sfA) \sfK.
    \end{aligned}
    \end{equation*}
    Then the above display, together with \eqref{e:Rictr} and \eqref{e:sL}, implies that
    \begin{equation*}
        \begin{aligned}
        \sfL(\sfA) \sfA +  \sfA\sfL(\sfA)^* =  2(\alpha a  &+ \beta b ) \sfQ + 2 \big(  \tilde{\alpha}(\sfA) a  +  \tilde{\beta}(\sfA) b  \big) (\sfI  - \sfQ) \\ &+ \bigg( \Big( \tfrac{\beta}{\kap} + \tilde{\beta}(\sfA) \Big)a 
        + \big(\alpha + \tilde{\alpha}(\sfA)\big)b  +  \frac{\beta (\kap - 1)}{\kap}c \bigg) \sfJ            \\
        &+ \big( \tilde{\beta}(\sfA)b  +  \tilde{\alpha}(\sfA)c \big) \sfK.
        \end{aligned}
    \end{equation*}
Furthermore, note that by \eqref{e:t} and Lemma \ref{l:pdEstimate} we have for any positive definite $A = \Mat_\kap(a, b, c)$ that
\begin{equation*}
\begin{aligned}
(\ta(\sfA) - \alpha)a + (\tb(\sfA) - \beta)b = \frac{\beta(\kap - 1) }{\kap}\bigg(\frac{a^2b - a b c}{a^2- b^2}\bigg) - \frac{\beta(\kap - 1)}{\kap}b + \frac{\beta(\kap-1)}{\kap}\bigg(\frac{abc - b^3}{a^2 - b^2}\bigg) = 0.
\end{aligned}
\end{equation*}
We then combine the previous two displays to obtain
        \begin{equation}
        \begin{aligned}
\sfF(\sfA) =2\sfI - \sfL(\sfA) \sfA  -  \sfA\sfL(\sfA)^* = \ff_1(\sfA) \sfI + \ff_2(\sfA)J + \ff_3(\sfA) \sfK,
        \end{aligned}
\label{e:sF_final}        
    \end{equation}
where
    \begin{equation}
        \begin{aligned}
            \ff_1(\sfA)&= 2(1 - \alpha a - \beta b), \\
            \ff_2(\sfA)&= -\bigg[\bigg(\frac{\beta}{\kap} + \tilde{\beta}(\sfA)\bigg)a + \big(\alpha + \tilde{\alpha}(\sfA)\big)b + \frac{\beta(\kap - 1)}{\kap}c\bigg], \\
            \ff_3(\sfA) &= - \tilde{\beta}(\sfA) b - \tilde{\alpha}(\sfA)c.
        \end{aligned}
        \label{e:ffi}
    \end{equation}
Then by \eqref{e:Mat2} and \eqref{e:sF_final}, we have that $\sfF(\sfA) \in \mSpace_\kap$.

Finally, we establish that $\sfF$ is uniformly 
Lipschitz on $\mSpace_\kap^+$. By \eqref{e:sF_final} and \eqref{e:ffi}, it will suffice to establish that $\ff_i:\mSpace_\kap^+ \rightarrow \R$ are uniformly Lipschitz for each $i = 1, 2, 3$. Since $\Mat_\kap : \R^3 \rightarrow \mSpace_\kap$ is a linear isomorphism, it will suffice to show that $\ff_i \circ \Mat_\kap^{-1} : \R^3 \rightarrow \R$ is uniformly Lipschitz for each $i = 1, 2, 3$. In the following, we abuse notation and simply write $\ff_i$ for $\ff_i \circ \Mat_\kap^{-1}$. 

Clearly, $\ff_1$ is a Lipschitz function. We then check that $\ff_2$ and $\ff_3$ are Lipschitz. By Definition \ref{d:CEop},  we have
\begin{equation}
\label{e:ffi_2}
\begin{aligned}
        \ff_2(a, b, c) &= - \bigg[ \bigg( \frac{2\beta}{\kap} + \frac{\beta(\kap - 1)}{\kap}g(a, b, c)\bigg)a + \bigg(2\alpha + \frac{\beta(\kap - 1)}{\kap}f(a, b, c)\bigg)b + \frac{\beta(\kap - 1)}{\kap} c\bigg] \\
        &= - \bigg[\frac{2 \beta}{\kap}a + 2\alpha b + \frac{\beta(\kap - 1)}{\kap}\big( a g(a, b, c) + b f(a, b, c) + c\big) \bigg]\\
        \ff_3(a, b, c) &= -\bigg[\bigg( \frac{\beta}{\kap} + \frac{\beta(\kap - 1)}{\kap}g(a, b, c)\bigg)b + \bigg(\alpha + \frac{\beta(\kap - 1)}{\kap}f(a, b, c)\bigg) c  \bigg] \\
        &= - \bigg[ \frac{\beta}{\kap}b + \alpha c + \frac{\beta(\kap - 1)}{\kap}\Big(bg(a, b, c) + c f(a, b, c)\Big)\bigg].
\end{aligned}
\end{equation}
Next, we compute $\nabla f(a, b, c)$ and $\nabla g(a, b, c)$. We have
\begin{equation}
\label{e:df}
\begin{aligned}
    \partial_af(a, b, c) &= \frac{b}{a^2-b^2}g(a, b, c) - \frac{a}{a^2 -b^2}f(a, b, c), \\
    \partial_bf(a, b, c) &= \frac{a^2 + b^2}{b(a^2-b^2)}f(a, b, c), \\ 
    \partial_cf(a, b, c) &= - \frac{b}{a^2-b^2}.
\end{aligned}
\end{equation}
Similarly, we have
\begin{equation}
\label{e:dg}
    \begin{aligned}
        \partial_a g(a, b, c) &= \frac{b}{a^2-b^2}f(a, b, c) - \frac{a}{a^2-b^2}g(a, b, c), \\
        \partial_bg(a, b, c) &= - \frac{2a}{a^2 - b^2}f(a, b, c), \\
        \partial_c g(a, b, c) &= \frac{a}{a^2 - b^2}.
    \end{aligned}
\end{equation}
By Lemma \ref{l:pdEstimate}, the functions $f, g$ are bounded on $\Mat_\kap^{-1}(\mSpace_\kap^+)$.
Therefore by \eqref{e:ffi_2}-\eqref{e:dg} we have
\begin{equation*}
\begin{aligned}
        \partial_a \ff_2 &= -\bigg[  \frac{2\beta}{\kap} + \frac{\beta(\kap - 1)}{\kap} \Big( a\partial_a g(a, b, c) + g(a, b, c)  + b \partial_af(a, b, c) \Big)\bigg] = - \frac{2\beta}{\kap}, \\
        \partial_b \ff_2 &= - \bigg[ 2\alpha + \frac{\beta(\kap - 1)}{\kap}\Big( \partial_bg(a, b, c) + b\partial_bf(a, b, c) + f(a, b,c)\Big)\bigg] =  - 2 \alpha,\\ 
        \partial_c 
        \ff_2 &= \frac{\beta(\kap - 1)}{\kap}\bigg[a \partial_c g(a, b, c) + b \partial_c f(a, b, c) + 1 \bigg] = -\frac{2\beta(\kap - 1)}{\kap}.
\end{aligned}
\end{equation*}
Thus $\ff_2$ is also uniformly Lipschitz. Finally, by \eqref{e:ffi_2}-\eqref{e:dg}, we have
\begin{equation*}
\begin{aligned}
        \partial_a \ff_3 &= - \frac{\beta(\kap -1)}{\kap}\bigg[ b \partial_a g(a, b, c) + c \partial_a f(a, b, c)\bigg] = - \frac{2\beta(\kap - 1)}{\kap} f(a,b,c)g(a,b,c) \\
        \partial_b 
        \ff_3 &= - \frac{\beta}{\kap} - \frac{\beta(\kap -1)}{\kap}\bigg[ g(a, b, c) + b \ \partial_bg(a, b, c) + c \partial_b f(a, b, c)\bigg] = - \frac{\beta}{\kap} + \frac{\beta(\kap - 1)}{\kap} \big(f(a, b, c)^2 + g(a, b, c)^2\big) \\
        \partial_c \ff_3 &= - \alpha - \frac{\beta(\kap - 1)}{\kap}\bigg[ b\partial_c g(a, b, c) + c \partial_c f(a, b, c) + f\bigg] = -\alpha - \frac{2\beta(\kap - 1)}{\kap}f(a, b, c)
\end{aligned}
\end{equation*}
The previous display, together with the aforementioned boundedness of $f$ and $g$ on $\Mat_\kap^{-1}(\mSpace_\kap^+)$, implies that $\ff_3$ is uniformly Lipschitz as well. Thus $\sfF$ is uniformly Lipschitz.
\end{proof}

\begin{proof}[Proof of \eqref{e:mRelations}]
Recall that we want to show:
    \begin{equation*}
    \begin{aligned}
        &\sfQ \sfJ = \sfJ (\sfI  - \sfQ) = \sfP, &\quad &\sfQ \sfK = \sfK\sfQ = \sfP^* \sfK = \sfK\sfP =0,  &\quad &\sfJ\sfP = \sfP^* \sfJ = \sfK + (\sfI  - \sfQ), \\
        &\sfP \sfJ = \sfJ\sfP^* = \kappa \sfQ, &\quad &(\sfP\sfK)^* = \sfK\sfP^* = (\kap - 1)\sfP^*,  &\quad &(\sfI  - \sfQ) \sfK = \sfK(\sfI  - \sfQ) = \sfK.        
    \end{aligned}
    \end{equation*}
In entry form, we have
\begin{equation*}
    \sfQ_{ij} = \delta_{i0}\delta_{j0}, \quad \sfP_{ij} = \delta_{i0}\sum_{r = 1}^\kap \delta_{rj}, \quad \sfR_{ij} = 1, \quad i,j \in \{0, \ldots, \kap\}.
\end{equation*}
and that $\sfJ = \sfP + \sfP^*$ and $\sfK = \sfR - \sfI - \sfJ$. In the following, we use Einstein notation for indices. 

First, we show $\sfQ \sfJ = \sfJ(\sfI -\sfQ) = \sfP$. We have
\begin{equation}
    (\sfQ\sfP)_{ij} = \sfQ_{il} \sfP_{lj} = \delta_{i0}\delta_{l0}\bigg(\delta_{l0}\sum_{r = 1}^\kap\delta_{jr}\bigg) = \delta_{i0}\sum_{r = 1}^\kap\delta_{jr} = \sfP_{ij}.
    \label{e:QP = P}
\end{equation}    
Similarly, we have
\begin{equation}
    (\sfQ\sfP^*)_{ij} = \sfQ_{il} \sfP_{lj} = \delta_{i0}\delta_{l0} \bigg(\delta_{j0}\sum_{r = 1}^\kap\delta_{lr}\bigg) = \delta_{i0}\delta_{j0} \bigg(\sum_{r = 1}^\kap \delta_{l0}\delta_{lr}\bigg) = 0.
\end{equation}
Recall $\sfJ = \sfP + \sfP^*$. Then we have
\begin{equation*}
    \sfQ\sfJ = \sfQ(\sfP +\sfP^*) = \sfP.
\end{equation*}
Since $(\sfQ\sfJ)^* = \sfJ\sfQ$, we have $\sfQ\sfJ + \sfJ \sfQ = \sfP + \sfP^* = \sfJ$, which verifies the second equality.

Next we compute the products of $\sfP$ and $\sfP^*$. Notice that we have
\begin{equation*}
    (\sfP\sfP)_{ij} = \sfP_{il}\sfP_{lj} =  \bigg( \delta_{i0} \sum_{r = 1}^\kap \delta_{lr}\bigg) \bigg( \delta_{l0} \sum_{r'= 1}^\kap \delta_{jr'}\bigg) = \delta_{i0}\sum_{r' = 1}^\kap \delta_{j r'}\bigg( \sum_{r = 1}^\kap \delta_{l0}\delta_{lr}\bigg) = 0.\label{e:PP}
\end{equation*}
Similarly, we have
\begin{equation}
    (\sfP\sfP^*)_{ij} = \sfP_{il}\sfP_{jl} = \bigg( \delta_{i0} \sum_{r = 1}^\kap \delta_{lr}\bigg) \bigg( \delta_{j0} \sum_{r'= 1}^\kap \delta_{lr'}\bigg) = \delta_{i0}\delta_{j0} \sum_{r = 1}^\kap \sum_{r' = 1}^\kap\delta_{lr} \delta_{lr'} = \kap \sfQ_{ij}.
    \label{e:PP*}
\end{equation}
Finally, we have
\begin{equation}
    (\sfP^*\sfP)_{ij} = \sfP_{li}\sfP_{lj} = \delta_{l0}\bigg(\sum_{r = 1}^\kap\delta_{ri}\bigg) \delta_{l0} \bigg(\sum_{r' = 1}^\kap \delta_{r'j}\bigg) = \sum_{r = 1}^\kap
\sum_{r' =1}^\kap \delta_{ri}\delta_{r'j} = (1 - \delta_{j0})(1-\delta_{i0})\label{e:P*P}\end{equation}
Then the above identities show that we have $\sfP\sfJ = \sfP(\sfP + \sfP^*) = \kap \sfQ$. By \eqref{e:PP} we have $\sfJ\sfP = \sfP^*\sfP$, and notice by \eqref{e:P*P} that $\sfP^*\sfP = \sfR - \sfJ - \sfQ = \sfK + (\sfI - \sfQ).$ Therefore, we have also proven that $\sfJ\sfP = \sfK + (\sfI - \sfQ)$.

Next, we show $\sfQ\sfK = 0$. First, we compute
\begin{equation*}
    (\sfQ \sfR)_{ij} = \sfQ_{il}\sfR_{lj} = \delta_{i0} = (\sfQ + \sfP)_{ij}
\end{equation*}
Similarly, we have from \eqref{e:QP = P} that $ (\sfQ + \sfQ\sfJ) = \sfQ + \sfP.$ Therefore, we have
\begin{equation*}
\sfQ\sfK = \sfQ(\sfR - (\sfI + \sfJ)) = 0.
\end{equation*}

Now, we establish $\sfP\sfK = (\kap - 1) \sfP$. We have 
\begin{equation*}
    (\sfP\sfR)_{ij} = \sfP_{il} = \kap \delta_{i0} = \kap( \sfQ + \sfP)_{ij}
    \label{e:PR}
\end{equation*}
Therefore by \eqref{e:PP} and \eqref{e:PP*} we have $$\sfP \sfK = \sfP(\sfR - \sfI - \sfJ) = \sfP\sfR - \sfP - \sfP\sfP - \sfP\sfP^* = \kap(\sfQ + \sfP) - \sfP - \kap \sfQ = (\kap - 1) \sfP.$$ 

We show $\sfK \sfP = 0$. We have
\begin{equation}
    (\sfR\sfP)_{ij} = \sum_{r = 1}^\kap\delta_{jr}.
\end{equation}
Therefore by \eqref{e:P*P} and the above display we have
\begin{equation*}
\begin{aligned}
    (\sfK\sfP)_{ij} &= (\sfR \sfP)_{ij} - \sfP_{ij} - (\sfJ\sfP)_{ij} \\ &= \sum_{r = 1}^\kap \delta_{jr} - \delta_{i0}\sum_{r = 1}^\kap
\delta_{jr} - \bigg(\sum_{r = 1}^\kap \sum_{r' = 1}^\kap \delta_{jr}\delta_{ir'}\bigg) \\ &= (1 - \delta_{i0})\sum_{r = 1}^\kap \delta_{jr} - \bigg(\sum_{r = 1}^\kap \sum_{r' = 1}^\kap \delta_{jr}\delta_{ir'}\bigg) \\ &= 0.
\end{aligned}
\end{equation*}
Finally, notice that $\sfI - \sfQ$ is the identity matrix on $\kap$ elements (with a $0$ in the $0$-th entry), and that $\sfK$ has all zeros in the first row and first column. This implies $(\sfI- \sfQ) \sfK = \sfK$.
\end{proof}

\bibliographystyle{abbrv}
\bibliography{refs}

\end{document}